\DeclareSymbolFont{tipa}{T3}{cmr}{m}{n}
\DeclareMathAccent{\invbreve}{\mathalpha}{tipa}{16}
\newtheorem{theorem}{Theorem}[section]
\newtheorem{lemma}[theorem]{Lemma}
\newtheorem{definition}[theorem]{Definition}
\newtheorem{assumption}[theorem]{Assumption}
\newtheorem{problem}[theorem]{Problem}
\newtheorem{remark}[theorem]{Remark}
\newcommand{\seminorm}[2]{\left\vert #1 \right\vert_{#2}}
\newcommand{\RR}{\mathbb{R}}
\newcommand{\curl}{\operatorname{\mathbf{curl}}}
\renewcommand{\div}{\operatorname{div}}
\newcommand{\p}[1]{\langle #1\rangle}
\newcommand{\pr}[1]{\left( #1\right)}
\newcommand{\modulo}[1]{\left\vert #1\right\vert}
\newcommand{\hcurl}[1]{\bm{H}( \curl; #1 )}
\newcommand{\hcurlcurl}[2]{\bm{H}_{#2}( \curl\curl; #1 )}
\newcommand{\hscurl}[2]{\bm{H}^{#2}( \curl; #1 )}
\newcommand{\hocurl}[1]{\bm{H}_0(\curl; #1 )}
\newcommand{\hdiv}[1]{\bm{H}(\div;#1)}
\newcommand{\hodiv}[1]{\bm{H}_0(\div;#1)}
\newcommand{\Hsob}[2]{\bm{H}^{#1}( #2 )}
\newcommand{\hsob}[2]{{H}^{#1}( #2 )}
\newcommand{\hosob}[2]{{H}_0^{#1}( #2 )}
\newcommand{\Lp}[2]{\bm{L}^{#1}( #2 )}
\newcommand{\lp}[2]{{L}^{#1}( #2 )}
\newcommand{\be}{\begin{equation}}
\newcommand{\ee}{\end{equation}}
\newcommand{\eps}{{\varepsilon}}
\newcommand{\C}{\mathcal C}
\newcommand{\cE}{\mathcal E}
\newcommand{\cO}{\mathcal O}
\newcommand{\cJ}{\mathcal J}
\newcommand{\dist}{\mathrm{dist}}
\newcommand{\bnul}{{\boldsymbol 0}}
\newcommand{\bM} {\mathbf{M}}
\newcommand{\bF} {\mathbf{F}}
\newcommand{\D}{\mathrm{D}}
\newcommand{\dup}[2]{\langle #1, #2\rangle}
\renewcommand{\bar}[1]{\overline#1}
\renewcommand{\div}{{\rm div}}
\newcommand{\IC}{{\mathbb C}}
\newcommand{\IN}{{\mathbb N}}
\newcommand{\IR}{{\mathbb R}}
\newcommand{\half}{\frac{1}{2}}
\newcommand{\bp}{{\bm p}}
\newcommand{\norm}[2]{\left\lVert #1\right\rVert_{#2}}
\newcommand{\tnorm}[1]{{\left\vert\kern-0.25ex\left\vert\kern-0.25ex\left\vert #1 
    \right\vert\kern-0.25ex\right\vert\kern-0.25ex\right\vert}}
\newcommand{\rK}{{\breve{K}}}
\renewcommand{\d}{\operatorname{d}}
\renewcommand{\div}{\operatorname{div}}
\newcommand{\rst}[1]{\left.#1\right|}  
\newcommand{\scurl}{\operatorname{curl}}
\newcommand{\bE}{\mathbf{E}}
\newcommand{\bH}{\boldsymbol{H}}
\newcommand{\VH}{\mathbf{H}}
\newcommand{\bn}{\mathbf{n}}
\newcommand{\bx}{\mathbf{x}}
\newcommand{\by}{\mathbf{y}}
\newcommand{\bJ}{\mathbf{J}}
\newcommand{\bW}{\boldsymbol{W}}
\newcommand{\bV}{\mathbf{V}}
\newcommand{\bU}{\mathbf{U}}
\newcommand{\href}{\VH^{\mathrm{ref}}}
\newcommand{\tD}{\gamma_{\mathrm{D}}}
\newcommand{\tN}{\gamma_{\mathrm{N}}}
\numberwithin{equation}{section}
\begin{document}
\title{The effect of quadrature rules on finite element solutions
of Maxwell variational problems. Consistency estimates on meshes with straight and curved elements.\thanks{This work was supported in part by Fondecyt Regular 1171491
and doctoral grant Conicyt-PFCHA 2017-21171791.}}

\author[1]{Rub\'en Aylwin}
\author[2]{Carlos Jerez-Hanckes}
{\tiny 
\affil[1]{Department of Electrical Engineering, Pontificia Universidad Cat\'olica de Chile, Santiago, Chile.}
\affil[2]{Faculty of Engineering and Sciences, Universidad Adolfo Ib\'a\~nez, Santiago, Chile.}
}

\maketitle

\begin{abstract}
We study the effects of numerical quadrature rules on error convergence rates when solving Maxwell-type variational problems via the curl-conforming or edge finite element method. A complete {\em a priori} error analysis for the case of bounded polygonal and curved domains with non-homogeneous coefficients is provided.
We detail sufficient conditions with respect to mesh refinement and precision for the quadrature rules so as to guarantee convergence rates following that of exact numerical integration. On curved domains, we isolate the error contribution to numerical quadrature rules.
\end{abstract}

\section{Introduction}
%
\label{sec:Intro}
We provide a complete error analysis on the effects
of numerical integration when approximating Maxwell
solutions via finite elements (FEs). Specifically, we
consider a range of problems set in bounded domains with
perfectly conducting boundary conditions (PEC). Through
our analysis, we find conditions for quadrature rules to
guarantee orders of convergence with respect to the
mesh-size of the error associated with numerical
approximation of exact integration.

Strang-type lemmas for have been long available for different types of problems: elliptic 
\cite{banerjee1992effect,ciarlet1991basic,ciarlet1972combined,Ciarlet:2002aa};
non-linear elliptic  \cite{abdulle2012priori}; fourth-order elliptic
problems \cite{bhattacharyya1999combined}; and, eigenvalue problems
\cite{banerjee1992note,banerjee1989estimation,vanmaele1995combined}. However, and to our knowledge, similar results
for Maxwell-type variational formulations are unavailable, thus motivating the present work. 

Our main results are Theorems \ref{thm:mainresaffine} and
\ref{thm:mainrescurv}, in Sections \ref{sec:PolyDom} and
\ref{sec:CurvDom}, respectively. The latter presents an
estimate for the error convergence rate between fully discrete
and continuous solutions on polygonal domains, specifying
sufficient conditions on quadrature rules to ensure the same
convergence rate one would obtain with exact integration.
The former drops the assumption that the domain be polygonal
and gives conditions on quadrature rules to ensure a desired
convergence rate of the error terms that spawn from considering
numerical integration. As is to be expected, conditions on quadrature rules will depend on the polynomial degree of
FE approximation spaces and the degree of precision used
to mesh the domain when not polygonal. Smoothness of parameters and of the continuous
solution will only limit the maximum possible
convergence rate.

In \cite{AJZS18}, we showed error estimates for fully discrete solutions
of a Maxwell-type problem with inhomogeneous coefficients on a
tetrahedral and quasi-uniform sequence of affine
meshes, which was of importance in the uncertainty quantification
(UQ) setting there considered. Our present results can be regarded
as a generalization of Theorem 3.19 in \cite{AJZS18} to account for
inhomogeneous and/or anisotropic materials as well as for the implementation
of meshes with curved elements (\emph{cf}.~\cite[Sec.~8.3]{Monk:2003aa}
and references therein). As in \cite{AJZS18}, a key tool throughout our analysis
on affine meshes---with straight tetrahedrons as elements---will be the
quasi-interpolation operators developed in \cite{Ern:2017ab}. These operators
require very low smoothness: no greater than $L^1$ from the interpolated
function, whereas the canonical interpolation operator requires a minimum
smoothness (\emph{cf.}~\cite{ern2004theory,Ern:2017ab,Monk:2003aa}).
Coupling these results with standard estimates for the error convergence of FE
solutions allow us to present a complete analysis of the convergence
of fully discrete solutions of Maxwell equations on polyhedral domains.

We shall not consider an analogous result on curved meshes as their use
proves advantageous only when the solution has some minimum smoothness.
As interpolation on curved elements lies beyond the scope of this work,
we refer to \cite{bhattacharyya1999combined,ciarlet1972combined,hernandez2003finite,lenoir1986}
and references therein as examples of strategies when dealing with curved boundaries. Incidentally,
we will mainly use the strategy presented in \cite{ciarlet1972combined}
to estimate the perturbations generated by the introduction of quadrature
rules. Hence, we shall isolate the impact of numerical integration
on the error convergence rate of fully discrete solutions and seek to find
convergence rates for those specific contributing terms.

The structure of the article is as follows. In Section 
\ref{sec:ErrAnalysis} we set notation to be used
throughout, introduce Maxwell equations and fix
the general structure of the variational problems considered.
Sections \ref{sec:PolyDom} and \ref{sec:CurvDom} concern
themselves with the analysis of the convergence rates of
fully discrete solutions,~i.e.,~when considering numerical integration
of the previously introduced problems on polygonal domains
and on domains with curved boundaries, respectively. Numerical examples are presented in Section \ref{sec:Num_Ex} and are followed by
concluding remarks in Section \ref{sec:Conc}.

\section{General definitions and Maxwell variational problems}
\label{sec:ErrAnalysis}
We start by setting the notation used in the following sections,
and continue by introducing the general form of the variational problems analysed.
\subsection{Notation}
For $d=1,2,3$ we consider $\Omega\subset\IR^{d}$ an open and
bounded Lipschitz domain. For $m\in\IN$, $\C^m(\Omega)$
denotes the set of real valued functions with $m$-continuous
derivatives on $\Omega$.
For $k$ and $q\in\IN$, $\mathbb{P}_k(\Omega;\IC^q)$
denotes the space of functions from $\Omega$ to $\IC^q$ with
polynomials of degree less than or equal to $k$ in their $q$ components and
$\widetilde{\mathbb{P}}_{k}(\Omega;\IC^q)$ denotes the space of elements of
$\mathbb{P}_k(\Omega;\IC^q)$ of degree exactly $k$ in their $q$ components.

Let $p\geq 1$ and $s\in\IR$, then
$L^p(\Omega)$  and $W^{p,s}(\Omega)$ denote the class
of $p$-integrable functions on $\Omega$ with values in $\IC$ and 
the standard Sobolev spaces, respectively. If $p=2$, we use the
standard notation $H^s(\Omega):=W^{2,s}(\Omega)$.
Boldface symbols will be used to differentiate general
scalar valued function spaces from their vector valued counterparts.

Norms and seminorms over a general Banach space $Y$ are
indicated by subscript ($\norm{\cdot}{Y}$ and $\seminorm{\cdot}{Y}$). We
make an exception for $H^s(\Omega)$, whose norm and seminorm will be written
as $\norm{\cdot}{s,\Omega}$ and $\seminorm{\cdot}{s,\Omega}$. The dual of
the Banach space $Y$ is denoted $Y'$.

For a Hilbert space $X$ (real or complex) its inner product is denoted as
$\pr{\cdot,\cdot}_X$, while duality products are denoted by
$\p{\cdot,\cdot}_{X'\times X}$. The same exception is made for
Sobolev spaces as in the case of norms and seminorms. Both
duality and inner products are understood in the sesqulinear
sense.
\subsection{Functional spaces}
\label{sec:funspaces}
We shall require the following functional space of vector valued
functions with integrable curl and divergence:
\begin{gather*}
\hcurl{\Omega}:=\left\{\bU\in \Lp{2}{\Omega} \ :\ \curl\bU\in \Lp{2}{\Omega} \right\},\\
\hdiv{\Omega}:=\left\{\bU\in \Lp{2}{\Omega} \ :\ \div\bU\in \Lp{2}{\Omega} \right\},
\end{gather*}
which are Hilbert spaces when paired with the following inner products
\begin{gather*}
\pr{\bU,\bV}_{\hcurl{\Omega}}:=\pr{\bU,\bV}_{{0},{\Omega}}+\pr{\curl\bU,\curl\bV}_{{0},{\Omega}},\\
\pr{\bU,\bV}_{\hdiv{\Omega}}:=\pr{\bU,\bV}_{{0},{\Omega}}+\pr{\div\bU,\div\bV}_{{0},{\Omega}}.
\end{gather*}
For $s>0$, we introduce the following extension to $\hcurl{\Omega}$
of functions in $\Hsob{s}{\Omega}$ with curl in $\Hsob{s}{\Omega}$
\cite[Sec.~3.5.3]{Monk:2003aa}:
\begin{align*}
\hscurl{\Omega}{s}:=\left\{\bU\in \Hsob{s}{\Omega} \ :\ \curl\bU\in \Hsob{s}{\Omega} \right\}
\end{align*}
with norm
\begin{align*}
\norm{\bU}{\bH^s(\curl;\Omega)}:=
\left(\norm{\curl\bU}{s,\Omega}^2+\norm{\bU}{s,\Omega}^2\right)^{\frac{1}{2}}.
\end{align*}
We also introduce the following subspace of $\hcurl{\Omega}$,
\begin{align*}
\hcurlcurl{\Omega}{}:=\left\lbrace \bU\in\hcurl{\Omega}\ :\ 
\curl\curl\bU\in\Lp{2}{\Omega} \right\rbrace
\end{align*}
and following trace spaces \cite{BuCo00,BufHipTvPCS_NM2003,Monk:2003aa}:
\begin{align*}
\bH^{-\half}_{\div}(\partial\Omega) := \{\bU\in
\bH^{-\half}(\partial\Omega):\bU\cdot\bn = 0,\; \div_{\partial\Omega}\bU \in
H^{-\half}(\partial\Omega) \},\\
\bH^{-\half}_{\scurl}(\partial\Omega) := \{\bU\in
\bH^{-\half}(\partial\Omega):\bU\cdot\bn = 0,\; \scurl_{\partial\Omega}\bU
\in H^{-\half}(\partial\Omega) \}
\end{align*}
where $\bn$ is the outward normal vector
from $\Omega$, $\div_{\partial\Omega}$ is the surface divergence
operator and $\scurl_{\partial\Omega}$ is the surface scalar curl
operator, respectively (\emph{cf.}~\cite{BuCo00,BufHipTvPCS_NM2003}).
Also, by \cite[Thm.~2]{BuHip} note that
$$\bH^{-\half}_{\scurl}(\partial\Omega)=
\left(\bH^{-\half}_{\div}(\partial\Omega)\right)'.$$ 
\begin{definition} \label{def:traces} Let
$\bU\in \boldsymbol{\C}^\infty(\bar{\Omega})$, then
\begin{gather*}
\tD\bU := \rst{\bn \times(\bU\times \bn)}_{{\partial \Omega}},
\quad\tD^\times\bU := \rst{(\bn\times\bU)}_{{\partial \Omega}},\\
\gamma_{\bn}\bU := \rst{(\bn\cdot\bU)}_{{\partial \Omega}},
\quad\mbox{and}\quad \tN\bU := \rst{(\bn\times
\curl\bU)}_{{\partial \Omega}} \;
\end{gather*}
are the Dirichlet trace, flipped Dirichlet trace,
normal trace and Neumann trace, respectively.
\end{definition}

The Dirichlet trace operators in Definition \ref{def:traces}
can be extended to linear and continuous operators
from $\bH(\curl;\Omega)$ to  $\bH^{-\half}({\partial\Omega})$
\cite[Thms.~3.29 and 3.31]{Monk:2003aa}. Specifically,
one sees that
\begin{align*}
\mathrm{Im}(\tD)=\bH^{-\half}_{\scurl}({\partial \Omega}),\qquad
\mathrm{Im}(\tD^{\times})=\bH^{-\half}_{\div}(\partial\Omega)
\end{align*}
allowing us to endow this spaces with the corresponding
graph norms given by the trace operators. Similarly, the
normal trace operator can be extended to a linear and
continuous operator \cite[Thm.~3.24]{Monk:2003aa}:
\begin{align*}
\gamma_{\bn}:\hdiv{\Omega}\rightarrow \bH^{-\frac{1}{2}}({\partial \Omega})
\end{align*}
while the Neumann trace may be extended as
\cite[Thm.~3.2]{BuCo00}
\begin{align*}
\gamma_{N}:\hcurlcurl{\Omega}{}\rightarrow \bH^{-\half}_{\div}(\partial\Omega).
\end{align*}

With the trace operators $\tD^\times$ and $\gamma_{\bn}$, we define 
\begin{gather}
\label{eq:H0curl}
\bH_0(\curl;\Omega) := \{\bU\in \bH(\curl;\Omega) \ : \ \tD^\times \bU =
\bnul \; \mbox{on} \; \partial \Omega \}\;,  \\
\label{eq:H0div}
\bH_0(\div;\Omega) := \{\bU\in \bH(\div;\Omega) \ : \ \gamma_{\bn} \bU =
\bnul \; \mbox{on} \; \partial \Omega \}.
\end{gather}
By continuity of $\tD^\times$, $\bH_0(\curl;\Omega)$ is a closed
subspace of $\bH(\curl;\Omega)$ (analogously,
$\bH_0(\div;\Omega)$ is a closed
subspace of $\bH(\div;\Omega)$). Finally, for $\bU$ and
$\bV\in \bH(\curl,\Omega)$ there holds \cite[Eq.~(27)]{BuCo00}:
\begin{equation}\label{eq:green}
(\bU,\curl\bV)_\Omega-(\curl\bU,\bV)_\Omega =
-\dup{\tD^\times\bU}{\tD\bV}_{\partial\Omega}\;
\end{equation}
where $\dup{\cdot}{\cdot}_{\partial\Omega}$ denotes the
duality between $\bH_{\mathrm{div}}^{-\half}(\partial \Omega)$
and $\bH^{-\half}_{\scurl}(\partial\Omega)$.

\subsection{Maxwell Equations}
We consider an open bounded Lipschitz domain
$\D\subset\IR^3$ with boundary $\Gamma=\partial\D$
as well as a time-harmonic dependence $e^{\imath \omega t}$
with circular frequency $\omega > 0$. We write $\bE$ and
$\VH$ for the complex-valued electric and magnetic fields,
respectively. Harmonic Maxwell equations on $\D$ read
\begin{align}\label{eq:Maxwell}
\begin{aligned}
\curl \bE + \imath\omega \mu \VH &= \bnul,\\
\imath\omega\eps  \bE -\curl \VH &= \bJ,
\end{aligned}
\end{align}
where $\mu$ and $\eps$ are assumed to be symmetric
matrix-valued functions with coefficients in $\lp{\infty}{\D}$,
and $\bJ$ is an imposed current, usually---but not necessarily---compactly supported in $\D$.
\begin{assumption}[Basic assumptions on the parameters]
Both $\mu$ and $\epsilon$ are symmetric complex-matrix
valued functions with coefficients in $\lp{\infty}{\D}$.
Furthermore, $\mu$ has a pointwise inverse, denoted
$\mu^{-1}$, almost everywhere on $\D$.
\end{assumption}

The Maxwell system \eqref{eq:Maxwell} is commonly reduced
to a second order partial differential equation by removing
either $\bE$ or $\VH$. We consider the following reduction:
\begin{align}\label{eq:MaxE}
\curl \mu^{-1}\curl \bE - \omega^2\eps \bE = -\imath\omega\bJ.
\end{align}
The system is completed by imposing PEC boundary
conditions on the surface $\Gamma$
\begin{align}\label{eq:PECCond}
\tD^{\times}\bE=0.
\end{align}
\subsection{Variational formulation}
We proceed as in \cite{AJZS18} and introduce the
sesquilinear and antilinear forms associated to equations
\eqref{eq:MaxE} and \eqref{eq:PECCond}, defined for
$\bU$ and $\bV\in\bH_0(\curl;\D)$
\begin{align}
\label{eq:Phi}
\Phi(\bU,\bV)&:=
\int_{\D} \mu^{-1} \curl \mathbf{U} \cdot \curl \overline{\mathbf{V}}-
\omega^2\epsilon \mathbf{U} \cdot \overline{\mathbf{V}} \d\!\bx,\\
\label{eq:rhs}
\bF(\bV)&:=-\imath\omega\int_{\D}\bJ\cdot\overline{\bV}\d\!\bx,
\end{align}
both continuous on $\bH_0(\curl;\D)$.

\begin{problem}[Continuous variational problem]
\label{prob:varprob}%
Find $\bE\in \bH_0(\curl;\D)$ such that
\begin{align*}
\Phi(\bE,\bV)=\bF(\bV),
\end{align*}
for all $\bV\in \bH_0(\curl;\D)$.
\end{problem}
Since we are only interested in the effect of
numerical integration when discretizing Problem
\ref{prob:varprob}, we assume the sesquilinear form
in \ref{eq:Phi} satisfies all necessary conditions for there
to be a unique solution of Problem \ref{prob:varprob} that
depends continually on the data.

\begin{assumption}[Wellposedness]
\label{ass:sesqform}
We assume the sesquilinear form $\Phi$ in \eqref{eq:Phi}
to satisfy the following conditions:
\begin{gather*}
\sup_{\bU\in \hocurl{\D}\setminus\{\bnul\}}\modulo{\Phi(\bU,\bV)}>0\quad
\forall\;\bV\in \hocurl{\D}\setminus\{\bnul\},\\
\inf_{\bU\in \hocurl{\D}\setminus\{\bnul\}}\left(\sup_{\bV\in \hocurl{\D}\setminus\{\bnul\}} 
\frac{\modulo{\Phi(\bU,\bV)}}{\norm{\bU}{\hcurl{\D}}\norm{\bV}{\hcurl{\D}}}\right)\geq C>0.
\end{gather*}
\end{assumption}

By Assumption \ref{ass:sesqform} and the continuity of
$\Phi$ and $\bF$ in \eqref{eq:Phi} and \eqref{eq:rhs},
there exists a unique solution $\bE\in\hocurl{\D}$
for Problem \ref{prob:varprob}. For examples of
variational problems with a structure analogous to that of 
$\Phi$ in \eqref{eq:Phi} we refer to \cite{ern2018analysis},
where two different problems concerning Maxwell equations
are found to be coercive---i.e.~$\modulo{\Phi(\bU,\bU)}/\norm{\bU}{\hcurl{\D}}^2\geq
\alpha>0$ for all $\bU\in\hocurl{\D}$---\cite[Chap.~4.7]{monk03} and references therein (incidentally, the problem
analysed in \cite{AJZS18} in the context of UQ is one of the problems in
\cite{ern2018analysis}).

\section{Finite Elements and Consistency Error Estimates for Polyhedral Domains}
\label{sec:PolyDom}
In what follows, we concern ourselves with discretizations of
Problem \ref{prob:varprob}. We shall construct a sequence
of meshes $\{\tau_h\}_{h>0}$, from which we construct
discrete subspaces of $\hocurl{\D}$ in order to
approximate the solution of Problem \ref{prob:varprob}.
We begin our analysis by assuming  $\D$ to be polyhedral,
so that meshes $\tau_h$ constructed from tetrahedrons cover
$\D$ exactly. We shall extend our analysis to curved domains
and consider non-affine meshes on the following section.

\begin{assumption}[Polyhedral domain]
\label{ass:polydom}
The open domain $\Omega$ is polyhedral.
\end{assumption} 
\subsection{Finite elements}
\label{ssec:FEMPoly}
Let $\{\tau_{h_i}\}_{i\in\IN}$ be a sequence of
quasi-uniform meshes constructed from disjoint, matching
tetrahedrons---$K\in\tau_h$ for each mesh $\tau_h$ in the sequence---that cover $\Omega$ exactly, where the subindex $h>0$
refers to the mesh-size of each mesh in the sequence and where
$h_i\rightarrow 0$ as $i\in\IN$ grows to infinity. 
\begin{assumption}[Assumptions on the sequence of meshes]
\label{ass:meshPoly}
 The meshes in the sequence $\{\tau_{h_i}\}_{i\in\IN}$ are affine,
 quasi-uniform and cover $\Omega$
 exactly.
\end{assumption}
\begin{definition}[Reference element]
We define $\breve{K}$ as the tetrahedron with vertices
$\bnul$, $\bm{e}_1$, $\bm{e}_2$ and $\bm{e}_3$; and
refer to it as the reference element.
\end{definition}
\begin{definition}\label{def:affinebijectivemap}
For any $i\in\IN$ and each $K\in\tau_{h_i}$ we define
$T_K:\breve{K}\mapsto K$ as affine, bijective mappings
from the reference tetrahedron to arbitrary $K\in\tau_{h_i}$.
We denote the Jacobians of these mappings as $\mathbb{J}_K$.
\end{definition}
The elements from the mesh, i.e.~$K\in\tau_h$,
may be considered as constructed from the reference
tetrahedron $\breve{K}$ through the mappings
$T_K$ introduced in Definition \ref{def:affinebijectivemap}.

\begin{definition}[Finite elements]\label{def:femtriple}
We will consider finite elements as triples $(K,P_K,\Sigma_K)$,
with $K\in\tau_h$, $P_K$ a space of functions over $K$ (usually polynomials) and
$\Sigma_K:=\{\sigma^{K}_{i}\}_{i=1}^{n_\Sigma},\ n_\Sigma\in\IN$ a set of linear
functionals acting on $P_K$ (\emph{cf.}~\cite{Monk:2003aa}).
\end{definition}
Let $k\in\IN$. Since we are considering only Maxwell equations,
we will only work with the finite element
$({K},{\bm{P}}^c_K,{\Sigma}^c_K)$ as defined in
\cite[Chapter 5]{Monk:2003aa} and corresponding
to curl-conforming elements,
\begin{align*}
{\bm{P}}^c_K&:=\mathbb{P}_{k-1}(K;\RR^3)\oplus \lbrace\bm{p}\in 
\widetilde{\mathbb{P}}_{k}(K,\RR^3)\ : \bx\cdot\bp=0\rbrace.
\end{align*}
For completeness, we also introduce the function spaces for grad-,
div and $L^2$-conforming finite elements:
\begin{align*}
{P}^g_K:=\mathbb{P}_{k}({K};\IC),\quad
{\bm{P}}^d_K:=\mathbb{P}_{k-1}(K;\IC^3)\oplus\lbrace\bx p\ :\
p\in\widetilde{\mathbb{P}}_{k-1}(K,\IC)\rbrace,\quad
{P}^b_K:=\mathbb{P}_{k-1}({K};\IC).
\end{align*}
We refer to \cite[Chapter 5]{Monk:2003aa} for
the definition of the degrees of freedom $\Sigma^g_K$,
$\Sigma^c_K$ and $\Sigma^d_K$, corresponding to the spaces
$P^g_K$, $\bm{P}^c_K$ and $\bm{P}^d_K$ respectively.

From here onwards, let $k\in\IN$ be fixed as the polynomial
degree of our approximation spaces and let $\tau_h$ be an
arbitrary mesh in the sequence $\{\tau_{h_i}\}_{i\in\IN}$, where
the subindex $h$ represents the size of the mesh, as before.
Discrete spaces on $\tau_h$ are constructed as follows 
\begin{align*}
P^g(\tau_h):= &\left\lbrace v_h\in\hsob{1}{\Omega} \ :\ 
v_h\vert_K\in P^g_K\right\rbrace,\\
\bm P^c(\tau_h):= &\left\lbrace \bV_h\in\hcurl{\Omega} \ :\  
\bV_h\vert_K \in {\bm{P}}^c_K\quad \forall\; K\in\tau_h\right\rbrace,\\
\bm P^d(\tau_h):= &\left\lbrace \bV_h\in\hdiv{\Omega} \ :\  
\bV_h\vert_K \in {\bm{P}}^d_K\quad \forall\; K\in\tau_h\right\rbrace,\\
P^b(\tau_h):= &\left\lbrace v_h\in\lp{2}{\Omega} \ :\ 
v_h\vert_K\in {P}^b_K\quad\forall\; K\in\tau_h  \right\rbrace.
\end{align*}
Homogeneous essential boundary conditions
are accounted for by imposing the conditions at the boundary:
\begin{align*}
P^g_0(\tau_h)&:= P^g(\tau_h)\cap\hosob{1}{\Omega},\\
\bm P^c_0(\tau_h)&:=\bm P^c(\tau_h)\cap\hocurl{\Omega},\\
\bm P^d_0(\tau_h)&:=\bm P^d(\tau_h)\cap\hodiv{\Omega}.
\end{align*}

We introduce, for every $K\in\tau_h$
the following pullbacks to the reference element $\breve{K}$:
\begin{gather}
\begin{gathered}
\psi^g_K(v):=v\circ T_K,\quad
\psi^c_K(\bV):=\mathbb{J}_K^\top (\bV\circ T_K),\\
\psi^d_K(\bV):=\det (\mathbb{J}_K)\mathbb{J}_K^{-1}(\bV\circ T_K),\quad
\psi^b_K(v):=\det (\mathbb{J}_K)(v\circ T_K),
\end{gathered}\label{eq:pullbacks}
\end{gather}
where $v\in \lp{p}{K}$ and $\bV\in\Lp{p}{K}$, $p\geq 1$. 
We continue by stating some useful properties of the
pullbacks in \eqref{eq:pullbacks} and refer to
\cite{ern2004theory} for their proofs.
First, the mappings in \eqref{eq:pullbacks} commute
with the differential operators, i.e.
\begin{align*}
\nabla\psi^g_K(v)=\psi^c_K(\nabla v),\quad \curl\psi^c_K(\bV)=
\psi^d_K(\curl \bV),\quad \div\psi^d_K(\bV)=\psi^b_K(\div \bV),
\end{align*} 
for all $K\in\tau_h$, for all functions $v$ with well defined
gradient, and $\bV$ with well defined curl or divergence,
respectively. Furthermore, the finite element spaces for
functions with well defined gradient, curl and divergence
are invariant with respect to their respective pullback to
$\breve{K}$ so that, for all $K\in\tau_h$ and $j\in\{ g,c,d,b\}$,
it holds
\begin{align*}
\psi_{K}^{j}:P^{j}_K\rightarrow P^{j}_{\breve{K}},\qquad 
(\psi_{K}^{j})^{-1}:P^{j}_{\breve{K}}\rightarrow P^{j}_K.
\end{align*}
Under Assumption \ref{ass:meshPoly}, there exist uniform
positive constants $c^\sharp$ and $c^\flat$ such that:
\begin{gather}\label{eq:JacboundK}
\modulo{\det(\mathbb{J}_K)}=\modulo{K}\modulo{\breve{K}}^{-1},\qquad
\norm{\mathbb{J}_K}{\RR^{3\times 3}}\leq c^{\sharp}h,\qquad
\norm{\mathbb{J}_K^{-1}}{\RR^{3\times 3}}\leq c^{\flat}h^{-1}.
\end{gather}
As in \cite{Ern:2017ab,ern2018analysis}, we continue by summarizing the
linear mappings defined in \eqref{eq:pullbacks} as
\begin{align}
\psi_K(v) = \mathbb{A}_K(v\circ T_K),\quad\mbox{or }
\quad \psi_K(\bV) = \mathbb{A}_K(\bV\circ T_K),\label{eq:psisummary}
\end{align}
to avoid repeating the properties \eqref{eq:struct1K} and \eqref{eq:struct2K},
where $\mathbb{A}_K=1$, $\mathbb{J}^{\top}_{K}$,
$\det(\mathbb{J}_K)\mathbb{J}^{-1}_K$ or
$\det(\mathbb{J}_K)$ and $q=1$ or $3$
depending on the choice of $\mathbb{A}_K$.
Then, for all $K\in\tau_h$, $p\geq 1$ and $l\in\IN_0$ the mappings in
\eqref{eq:pullbacks} satisfy 
\begin{gather}
\seminorm{\psi_K}{\mathcal{L}(W^{l,p}(K);W^{l,p}(\breve{K}))}\leq
c\norm{\mathbb{A}_K}{\RR^{q\times q}}\norm{\mathbb{J}_K}{\RR^{3\times 3}}^{l}
\modulo{\det(\mathbb{J}_K)}^{-\frac{1}{p}},\label{eq:struct1K}\\
\seminorm{\psi_K^{-1}}{\mathcal{L}(W^{l,p}(\breve{K});W^{l,p}({K}))}\leq
c\norm{\mathbb{A}_K^{-1}}{\RR^{q\times q}}\norm{\mathbb{J}_K^{-1}}{\RR^{3\times 3}}^{l}
\modulo{\det(\mathbb{J}_K)}^{\frac{1}{p}}.\label{eq:struct2K}
\end{gather}
We continue by stating some Lemmas that will be required
during the proof of the main result of this section.
   \begin{lemma}[Lemma 1.138 in \cite{ern2004theory}]
   \label{lemma:inverseineq}
    Let $K\in\tau_h$ and $m$, $l\in\IN_0$ such that $m\leq l$. Then,
    under Assumption \ref{ass:meshPoly}, for $\phi$ in either $P^g_K$,
    $\bm{P}^c_K$, $\bm{P}^d_K$ or ${P}^b_K$,
    \begin{align*}
      \norm{\phi}{W^{l,p}(K;\IR^q)}\leq ch^{m-l}\norm{\phi}{W^{m,p}(K;\IR^q)},
    \end{align*}
    for a positive constant $c$ independent of $K$ and the mesh $\tau_h$ and $q=1$
    or $3$ depending on the finite element.
 \end{lemma}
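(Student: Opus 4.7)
The plan is the standard scale-and-map argument: pull everything back to the reference element $\breve{K}$, exploit finite-dimensionality there to trade the $W^{l,p}$-norm for the $W^{m,p}$-norm, and then push forward to $K$ while carefully tracking the powers of $h$ coming from $\mathbb{J}_K$. Since the spaces $P^j_K$ are invariant under the corresponding pullback $\psi_K$ (as stated before Lemma \ref{lemma:inverseineq}), we may write $\hat\phi := \psi_K(\phi) \in P^j_{\breve{K}}$ and work with $\hat\phi$ on the fixed reference tetrahedron.

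The first step is to apply \eqref{eq:struct2K} with the exponent $l$ to bound
\[
\seminorm{\phi}{W^{l,p}(K;\IR^q)}
\;\leq\; c\,\norm{\mathbb{A}_K^{-1}}{\RR^{q\times q}}\,
\norm{\mathbb{J}_K^{-1}}{\RR^{3\times 3}}^{l}\,
\modulo{\det(\mathbb{J}_K)}^{\frac{1}{p}}\,
\seminorm{\hat\phi}{W^{l,p}(\breve{K};\IR^q)},
\]
and analogously for all lower-order seminorms, summing to control $\norm{\phi}{W^{l,p}(K;\IR^q)}$ by $\norm{\hat\phi}{W^{l,p}(\breve{K};\IR^q)}$ up to the same scaling factor. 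The second step is the purely algebraic observation that on the finite-dimensional space $P^j_{\breve{K}}$ all norms are equivalent; in particular there is a constant $C=C(k,p,j,\breve K)$, independent of $K$ and of the mesh, such that
\[
\norm{\hat\phi}{W^{l,p}(\breve{K};\IR^q)} \;\leq\; C\,\norm{\hat\phi}{W^{m,p}(\breve{K};\IR^q)}\qquad\forall\,\hat\phi\in P^j_{\breve{K}}.
\]
The third step is to push $\hat\phi$ back to $K$ via \eqref{eq:struct1K} with exponent $m$, yielding
\[
\norm{\hat\phi}{W^{m,p}(\breve{K};\IR^q)}
\;\leq\; c\,\norm{\mathbb{A}_K}{\RR^{q\times q}}\,
\norm{\mathbb{J}_K}{\RR^{3\times 3}}^{m}\,
\modulo{\det(\mathbb{J}_K)}^{-\frac{1}{p}}\,
\norm{\phi}{W^{m,p}(K;\IR^q)}.
\]
Concatenating these three estimates makes the determinant factors cancel and leaves the product $\norm{\mathbb{A}_K^{-1}}{}\norm{\mathbb{A}_K}{}\,\norm{\mathbb{J}_K^{-1}}{}^{l}\norm{\mathbb{J}_K}{}^{m}$.

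The only point requiring attention, and the nearest thing to an obstacle, is the uniform boundedness of the dimensionless factor $\norm{\mathbb{A}_K^{-1}}{}\norm{\mathbb{A}_K}{}$ across the four cases $j\in\{g,c,d,b\}$: for $\psi^g_K$ and $\psi^b_K$ this factor equals $1$, while for $\psi^c_K$ and $\psi^d_K$ it reduces to $\norm{\mathbb{J}_K}{}\,\norm{\mathbb{J}_K^{-1}}{}$, which is bounded by $c^\sharp c^\flat$ uniformly in $K$ and $h$ by \eqref{eq:JacboundK}. Using the same bounds \eqref{eq:JacboundK} once more gives $\norm{\mathbb{J}_K^{-1}}{}^{l}\norm{\mathbb{J}_K}{}^{m}\leq (c^\flat)^l (c^\sharp)^m\,h^{m-l}$, and collecting constants produces the stated inequality with a constant independent of $K$ and the mesh. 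The argument applies verbatim to each of the four finite element spaces listed, with $q=1$ or $q=3$ chosen according to the relevant pullback.
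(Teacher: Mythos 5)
Your overall strategy (map to $\breve{K}$, use finite dimensionality there, map back) is the standard one — the paper itself offers no proof and simply cites \cite{ern2004theory} — but as written your third step contains a genuine error. The bounds \eqref{eq:struct1K}--\eqref{eq:struct2K} are \emph{seminorm} estimates, and the factor $\norm{\mathbb{J}_K}{\RR^{3\times3}}^{m}$ cannot survive when you upgrade \eqref{eq:struct1K} to a bound between full $W^{m,p}$-norms: assembling $\norm{\hat\phi}{W^{m,p}(\breve K)}$ from the seminorms of orders $n=0,\dots,m$ produces factors $\norm{\mathbb{J}_K}{\RR^{3\times3}}^{n}$, and since $\norm{\mathbb{J}_K}{\RR^{3\times3}}\leq c^\sharp h$ is \emph{small}, the dominant one is $n=0$, i.e.\ the constant $1$, not $\norm{\mathbb{J}_K}{\RR^{3\times3}}^{m}$. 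Concretely, your displayed inequality
\begin{align*}
\norm{\hat\phi}{W^{m,p}(\breve{K})}\leq c\,\norm{\mathbb{A}_K}{\RR^{q\times q}}\norm{\mathbb{J}_K}{\RR^{3\times 3}}^{m}\modulo{\det(\mathbb{J}_K)}^{-\frac1p}\norm{\phi}{W^{m,p}(K)}
\end{align*}
is false already for $\phi\equiv 1$ and $m=1$: the left side is the fixed number $\modulo{\breve K}^{1/p}$ while the right side is of order $h\,\modulo{\breve K}^{1/p}\to 0$. With the corrected full-norm factor ($\norm{\mathbb{J}_K}{\RR^{3\times3}}^{m}$ replaced by a constant), your concatenation only yields $\norm{\phi}{W^{l,p}(K)}\leq c\,h^{-l}\norm{\phi}{W^{m,p}(K)}$, which is strictly weaker than the claimed $h^{m-l}$ whenever $m\geq 1$. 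Note the asymmetry: the same summation trick \emph{is} legitimate in your first step, because there the relevant factor $\norm{\mathbb{J}_K^{-1}}{\RR^{3\times3}}\gtrsim h^{-1}$ is large, so the top order $l$ dominates; it is precisely in the reverse direction that it breaks.

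The repair is the classical one: keep seminorms wherever the gain in $h$ is needed. For each $n$ with $m\leq n\leq l$, chain
$\seminorm{\phi}{W^{n,p}(K)}\lesssim\norm{\mathbb{A}_K^{-1}}{\RR^{q\times q}}\norm{\mathbb{J}_K^{-1}}{\RR^{3\times3}}^{n}\modulo{\det(\mathbb{J}_K)}^{1/p}\seminorm{\hat\phi}{W^{n,p}(\breve K)}$,
then use on $P^j_{\breve K}$ the one-sided equivalence $\seminorm{\hat\phi}{W^{n,p}(\breve K)}\leq C\seminorm{\hat\phi}{W^{m,p}(\breve K)}$ (this is not plain norm equivalence, since these are seminorms; it holds because within the polynomial space the kernel of the right-hand seminorm consists of polynomials of degree at most $m-1$, on which the left-hand side also vanishes, so the quotient/compactness argument applies), and finally map back with the seminorm estimate \eqref{eq:struct1K} at order $m$, which legitimately carries $\norm{\mathbb{J}_K}{\RR^{3\times3}}^{m}$. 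This gives $\seminorm{\phi}{W^{n,p}(K)}\leq C h^{m-n}\seminorm{\phi}{W^{m,p}(K)}\leq C' h^{m-l}\norm{\phi}{W^{m,p}(K)}$ using \eqref{eq:JacboundK} and $h\leq h_{\max}$; the seminorms of order $n<m$ are bounded directly by $\norm{\phi}{W^{m,p}(K)}\leq h_{\max}^{\,l-m}h^{m-l}\norm{\phi}{W^{m,p}(K)}$. Summing over $n\leq l$ yields the lemma, and your observation that $\norm{\mathbb{A}_K^{-1}}{\RR^{q\times q}}\norm{\mathbb{A}_K}{\RR^{q\times q}}$ is uniformly bounded in all four cases $j\in\{g,c,d,b\}$ is correct and carries over unchanged.
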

 
 \begin{lemma}[Quasi-interpolation operator \cite{Ern:2017ab}]
\label{lemma:quasiInt}
Let $K\in\tau_h$. For each one of the function spaces $\bm P^c(K)$ and
$\bm P^d(K)$ there exists a quasi-interpolation operator, denoted
$\mathcal{I}_K^{\#,c}$ and $\mathcal{I}_K^{\#,d}$
respectively, and a positive constant $c>0$ independent of
$K$ and $h$ such that for all $\bV\in\bm W^{m,p}(K)$,
$\mathcal{I}_K^{\#,c}(\bV)\in\bm P^c(K)$, $\mathcal{I}_K^{\#,d}(\bV)\in\bm P^d(K)$,
\begin{align*}
\seminorm{\bV-\mathcal{I}_K^{\#,c}(\bV)}{\bm W^{m,p}(K)}\leq ch_{K}^{r-m}\seminorm{\bV}{\bm W^{r,p}(K)},\\
\seminorm{\bV-\mathcal{I}_K^{\#,d}(\bV)}{\bm W^{m,p}(K)}\leq ch_{K}^{r-m}\seminorm{\bV}{\bm W^{r,p}(K)},
\end{align*}
for all $m\in\IN$, $r\in\IR$ and $p\in\IR$ with $p\geq1$ and $m\leq r\leq k$,
where $h_K$ is the diameter of $K$. Furthermore, $\bm P^c(K)$ and
$\bm P^d(K)$ are invariant with respect to their corresponding
quasi-interpolation operators.
\end{lemma}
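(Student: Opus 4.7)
Since Lemma \ref{lemma:quasiInt} is essentially a citation of the construction given in \cite{Ern:2017ab}, my plan is not to reprove it from scratch but to outline the construction and the estimates that one would need to verify.

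First, I would build the operators on the reference element $\breve K$. Following Ern--Guermond, one assembles a macro-patch around $\breve K$ and defines commuting, $L^1$-stable projectors $\breve{\mathcal I}^{\#,c}$ and $\breve{\mathcal I}^{\#,d}$ onto $\bm P^c(\breve K)$ and $\bm P^d(\breve K)$ respectively. The strategy is to compose a smoothing-by-averaging operator on the patch---which lifts an $\bm L^1$ function to a function smooth enough on $\breve K$ for canonical moments to be defined---with the N\'ed\'elec or Raviart--Thomas canonical interpolation of the smoothed function. Two technical properties are needed: (i) the operator is a projection, so it preserves $\bm P^c(\breve K)$ or $\bm P^d(\breve K)$; and (ii) it is $\bm W^{m,p}(\breve K)$-stable for every $0\le m\le k$ and every $p\ge 1$. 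Both are proved in detail in \cite{Ern:2017ab}.

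Second, I would transport the operator to an arbitrary cell $K\in\tau_h$ by the pullbacks introduced in \eqref{eq:pullbacks}, setting
\[
\mathcal I_K^{\#,c}(\bV) := (\psi_K^c)^{-1}\bigl(\breve{\mathcal I}^{\#,c}(\psi_K^c \bV)\bigr),
\qquad
\mathcal I_K^{\#,d}(\bV) := (\psi_K^d)^{-1}\bigl(\breve{\mathcal I}^{\#,d}(\psi_K^d \bV)\bigr).
\]
The invariance of $\bm P^c(K)$ and $\bm P^d(K)$ under the covariant and contravariant pullbacks, recalled in Section \ref{ssec:FEMPoly}, guarantees that these operators do map into the claimed polynomial spaces, and the projection property is inherited on $K$.

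Third, I would obtain the estimate by a standard scaling argument. Because $\breve{\mathcal I}^{\#,\cdot}$ is $\bm W^{m,p}(\breve K)$-bounded and fixes every polynomial in $\bm P^c(\breve K)$ or $\bm P^d(\breve K)$, a Deny--Lions/Bramble--Hilbert argument on the fixed reference element yields
\[
\seminorm{\breve\bV-\breve{\mathcal I}^{\#,c}\breve\bV}{\bm W^{m,p}(\breve K)}
\le c\,\seminorm{\breve\bV}{\bm W^{r,p}(\breve K)},
\qquad m\le r\le k,
\]
with $c$ depending only on $\breve K$, $k$, $m$, $p$. Setting $\breve\bV = \psi_K^c \bV$ and invoking \eqref{eq:struct1K}--\eqref{eq:struct2K} together with the geometric bounds \eqref{eq:JacboundK}, the determinant factors $\modulo{\det\mathbb J_K}^{\pm 1/p}$ cancel and the products $\norm{\mathbb J_K}{}^{r}\norm{\mathbb J_K^{-1}}{}^{m}$ accumulate into exactly $h_K^{r-m}$ under Assumption \ref{ass:meshPoly}. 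The same calculation, with the contravariant pullback $\psi_K^d$, delivers the analogous estimate for $\mathcal I_K^{\#,d}$.

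The main obstacle---and the reason a quasi-interpolant is used rather than the canonical one---is that the edge and face moments defining the N\'ed\'elec and Raviart--Thomas interpolants are not continuous on $\bm L^1$, so the canonical operator is simply not defined at the low end of the smoothness range allowed here ($m=0$, $p=1$). The averaging step in the construction of $\breve{\mathcal I}^{\#,\cdot}$ is precisely what circumvents this; verifying that this averaging nevertheless preserves the full polynomial space and commutes appropriately with the differential operators is the delicate point, and it is exactly what is carried out in \cite{Ern:2017ab}.
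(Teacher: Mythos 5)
The paper offers no proof of Lemma \ref{lemma:quasiInt} at all: it is taken verbatim (up to notation) from \cite{Ern:2017ab}, so there is nothing internal to compare your argument against, and your decision to sketch the construction and defer the substance to that reference is consistent with how the paper treats it. That said, the construction you sketch is not the one behind the operators $\mathcal{I}_K^{\#,c}$, $\mathcal{I}_K^{\#,d}$ actually invoked here. In \cite{Ern:2017ab} the elementwise operator $\mathcal{I}_K^{\#}$ is built on the reference cell from a biorthogonal (dual) system in $L^\infty(\breve K)$, i.e.\ $\breve{\mathcal{I}}^{\#}(\breve\bV)=\sum_i \langle \breve\bV,\breve{\boldsymbol\theta}_i\rangle\,\breve{\boldsymbol\phi}_i$ with $\langle\breve{\boldsymbol\phi}_j,\breve{\boldsymbol\theta}_i\rangle=\delta_{ij}$, which makes it an $\bm L^1$-bounded projection by construction and purely local to the cell; it is then transported by the pullbacks \eqref{eq:pullbacks} exactly as in your second step, and the estimate follows by the Bramble--Hilbert plus scaling argument you describe, using \eqref{eq:struct1K}--\eqref{eq:struct2K}, \eqref{eq:JacboundK} and Assumption \ref{ass:meshPoly}. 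Your alternative route---mollification/averaging over a macro-patch followed by the canonical N\'ed\'elec or Raviart--Thomas interpolant---is the Sch\"oberl/Christiansen--Winther style of smoothed projection; it also yields commuting, low-regularity operators, but as described it has two drawbacks relative to what the lemma asserts: the averaging uses data outside the cell, so it naturally produces patch-local rather than element-local error bounds (the statement here bounds the error on $K$ by the seminorm on $K$ alone), and making such an operator an actual projection onto $\bm P^c(K)$, $\bm P^d(K)$ requires an extra correction step that is trivial in the biorthogonal construction. Finally, note that the lemma allows real $r$ with $m\le r\le k$, so the integer-order Deny--Lions argument you invoke must be supplemented by a fractional Bramble--Hilbert or interpolation-space argument (as is done in \cite{Ern:2017ab}, and as the paper itself does in Lemma \ref{lemma:IBound} via real interpolation).
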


\begin{lemma}[Lemma 3.5 in \cite{AJZS18}]
\label{lemma:IBound}
Let $K\in \tau_h$ and $\mathcal{I}^{\#}_K$ denote either
$\mathcal{I}^{\#,c}_K$ or $\mathcal{I}^{\#,d}_K$. Then there
exists a constant $c>0$
independent of $K$ and $h$ such that
\begin{align}
\norm{\mathcal{I}^{\#}_K(\bV)}{\bm{W}^{m,p}(K)}&
\leq c\norm{\bV}{\bm{W}^{m,p}(K)},\label{eq:ilpstab}\\
\norm{\mathcal{I}^{\#}_K(\bV)}{\bm{W}^{m,p}(K)}&
\leq ch^{-1}\norm{\bV}{\bm{W}^{m-1,p}(K)},\label{eq:ilpstab-1}
\end{align}
for all $m\in\IN$ with $m\leq k$ and
$p\in[1,\infty]$. Furthermore, for $r\in\IR$ such that $r\leq k$
it holds
\begin{align}
\norm{\mathcal{I}^{\#}_K(\bV)}{\Hsob{\lceil r \rceil}{K}}
\leq ch^{r-\lceil r \rceil}\norm{\bV}{\Hsob{r}{K}}.\label{eq:sobinterpstab}
\end{align}
\end{lemma}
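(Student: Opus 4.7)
The plan is to combine the approximation bounds of Lemma \ref{lemma:quasiInt} with the inverse inequality of Lemma \ref{lemma:inverseineq} and, where needed, a scaling argument to the reference element $\breve{K}$. For the stability bound \eqref{eq:ilpstab}, I would start from the triangle inequality
\begin{equation*}
\norm{\mathcal{I}^{\#}_K(\bV)}{\bm{W}^{m,p}(K)} \leq \norm{\bV}{\bm{W}^{m,p}(K)} + \norm{\bV - \mathcal{I}^{\#}_K(\bV)}{\bm{W}^{m,p}(K)},
\end{equation*}
and apply Lemma \ref{lemma:quasiInt} with $r=m$ to every integer-order seminorm $\seminorm{\bV - \mathcal{I}^{\#}_K(\bV)}{\bm{W}^{j,p}(K)}$ with $0 \leq j \leq m$. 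Because $h_K$ is uniformly bounded from above by the quasi-uniformity of Assumption \ref{ass:meshPoly}, each factor $h_K^{m-j}$ is controlled by a constant, so $\norm{\bV - \mathcal{I}^{\#}_K(\bV)}{\bm{W}^{m,p}(K)} \leq c \norm{\bV}{\bm{W}^{m,p}(K)}$, which yields \eqref{eq:ilpstab}.

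For the $h^{-1}$ estimate \eqref{eq:ilpstab-1}, the key observation is that $\mathcal{I}^{\#}_K(\bV)$ lies in the polynomial space $\bm{P}^{\#}_K$, so Lemma \ref{lemma:inverseineq} applied to $\mathcal{I}^{\#}_K(\bV)$ gives $\norm{\mathcal{I}^{\#}_K(\bV)}{\bm{W}^{m,p}(K)} \leq c h^{-1} \norm{\mathcal{I}^{\#}_K(\bV)}{\bm{W}^{m-1,p}(K)}$; plugging in \eqref{eq:ilpstab} with $m$ replaced by $m-1$ on the right-hand side then produces the claim.

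The fractional-order bound \eqref{eq:sobinterpstab} will be the main obstacle, since Lemma \ref{lemma:quasiInt} is formulated only for $m \leq r$, whereas here we need to control a norm of order $\lceil r \rceil \geq r$ by one of order $r$. My plan is to pass to the reference element via the pullback $\psi_K$ from \eqref{eq:pullbacks}, exploiting the invariance of the discrete spaces under $\psi_K$ and the commutation $\psi_K \circ \mathcal{I}^{\#}_K = \mathcal{I}^{\#}_{\breve{K}} \circ \psi_K$. On $\breve{K}$, the operator $\mathcal{I}^{\#}_{\breve{K}}$ takes values in the finite-dimensional polynomial space $\bm{P}^{\#}_{\breve{K}}$, on which the $\Hsob{\lceil r \rceil}{\breve{K}}$- and $\Hsob{r}{\breve{K}}$-norms are equivalent; combined with the continuity of $\mathcal{I}^{\#}_{\breve{K}}$ on $\Hsob{r}{\breve{K}}$---which follows from Lemma \ref{lemma:quasiInt} taken at integer $r$ and from interpolation in the Sobolev scale for non-integer $r$---this yields a reference-element bound of the form $\norm{\mathcal{I}^{\#}_{\breve{K}}(\breve{\bV})}{\Hsob{\lceil r \rceil}{\breve{K}}} \leq C \norm{\breve{\bV}}{\Hsob{r}{\breve{K}}}$. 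Scaling back to $K$ using \eqref{eq:struct1K}--\eqref{eq:struct2K} together with the Jacobian estimates \eqref{eq:JacboundK} introduces exactly the power $h^{r - \lceil r \rceil}$, arising from the different $h$-scalings of the two Sobolev norms under $\psi_K$, and produces \eqref{eq:sobinterpstab}.
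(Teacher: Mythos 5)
Your treatment of \eqref{eq:ilpstab} and \eqref{eq:ilpstab-1} is correct and consistent with the paper, which simply declares them consequences of the error estimate of Lemma \ref{lemma:quasiInt}; your route (triangle inequality for \eqref{eq:ilpstab}, then the inverse inequality of Lemma \ref{lemma:inverseineq} applied to $\mathcal{I}^{\#}_K(\bV)\in\bm{P}^c_K$ or $\bm{P}^d_K$ for \eqref{eq:ilpstab-1}) is the natural way to make that precise. One small caveat: for $m=1$ your second step invokes \eqref{eq:ilpstab} at order $m-1=0$, i.e.\ $L^p$-stability of the quasi-interpolant, which is not literally covered by the statement ($m\in\IN$) nor by Lemma \ref{lemma:quasiInt} (which needs $m\le r$); it does hold for the operators of \cite{Ern:2017ab}, but you should appeal to that directly rather than to \eqref{eq:ilpstab}.

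The genuine gap is in \eqref{eq:sobinterpstab}. The paper proves it in one line: apply real interpolation (\cite[Lemma 22.3]{tartar2007}) to the operator $\mathcal{I}^{\#}_K$ between the two bounds already established with $p=2$ and $m=\lceil r\rceil$, namely $\mathcal{I}^{\#}_K:\Hsob{\lceil r\rceil}{K}\to\Hsob{\lceil r\rceil}{K}$ with norm $c$ and $\mathcal{I}^{\#}_K:\Hsob{\lceil r\rceil-1}{K}\to\Hsob{\lceil r\rceil}{K}$ with norm $ch^{-1}$; with $\theta=r-\lceil r\rceil+1$ the interpolated operator norm is $c\,h^{-(1-\theta)}=c\,h^{r-\lceil r\rceil}$. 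Your reference-element route has two problems. First, it rests on the commutation $\psi_K\circ\mathcal{I}^{\#}_K=\mathcal{I}^{\#}_{\breve K}\circ\psi_K$, which is neither stated in the paper nor available in general for these quasi-interpolation operators: unlike the canonical interpolants, the operators of \cite{Ern:2017ab} are built by patch-based averaging and are not defined elementwise through a fixed reference-element operator. Second, even granting a reference bound $\norm{\mathcal{I}^{\#}_{\breve K}(\breve{\bV})}{\Hsob{\lceil r\rceil}{\breve K}}\le C\norm{\breve{\bV}}{\Hsob{r}{\breve K}}$, the scaling does not ``introduce exactly'' $h^{r-\lceil r\rceil}$: the estimates \eqref{eq:struct1K}--\eqref{eq:struct2K} are integer-order and, applied to full norms, give at worst a factor $h^{-1}h^{-\lceil r\rceil}\modulo{\det\mathbb{J}_K}^{1/2}$ when returning to $K$ at order $\lceil r\rceil$ and only $h\,\modulo{\det\mathbb{J}_K}^{-1/2}$ when passing to $\breve K$ (the $L^2$ part of the $\Hsob{r}{K}$ norm carries no positive power of $h$), so the product is $h^{-\lceil r\rceil}$, far weaker than claimed. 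Rescuing your route would require arguing seminorm by seminorm, using polynomial reproduction of $\mathcal{I}^{\#}_{\breve K}$ plus a Deny--Lions step in $\Hsob{r}{\breve K}$ to pass to the seminorm, and a fractional Sobolev--Slobodeckij scaling estimate not contained in \eqref{eq:struct1K}. Since you already invoke Sobolev-scale interpolation to get continuity of $\mathcal{I}^{\#}_{\breve K}$ on $\Hsob{r}{\breve K}$, it is both simpler and gap-free to interpolate directly between \eqref{eq:ilpstab} and \eqref{eq:ilpstab-1} on $K$ itself, which is exactly the paper's argument.
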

\begin{proof}
The estimates in \eqref{eq:ilpstab} and \eqref{eq:ilpstab-1} are
consequence of the error estimate in Lemma \ref{lemma:quasiInt},
while \eqref{eq:sobinterpstab} follows from both \eqref{eq:ilpstab}
and \eqref{eq:ilpstab-1} by an application of real interpolation
between Sobolev spaces (\emph{cf.}~\cite[Lemma 22.3]{tartar2007}).
\end{proof}
\subsection{Discrete variational problem}
With the previous definitions of curl-conforming discrete spaces
at hand, we can now state the discrete version of Problem \ref{prob:varprob}.

\begin{problem}[Discrete variational problem on affine meshes]
\label{prob:varprobdis}%
Find $\bE_h\in \bm P^c_0(\tau_h)$ such that
\begin{align*}
\Phi(\bE_h,\bV_h)=\bF(\bV_h),
\end{align*}
for all $\bV_h \in \bm P^c_0(\tau_h)$.
\end{problem}

As with Assumption \ref{ass:sesqform}, we assume our framework to be such
that a unique discrete solution $\bE_h\in\bm P^c_0(\tau_h)$ exists for all
meshes $\tau_h\in\{\tau_{h_i}\}_{i\in\IN}$.

\begin{assumption}[Wellposedness on $\bm P^c_0(\tau_h)$]
\label{ass:sesqformdis}
We assume the sesquilinear form $\Phi$ in \eqref{eq:Phi} to satisfy the following:
\begin{gather*}
\sup_{\bU_h\in \bm P^c_0(\tau_h)\setminus\{\bnul\}}\modulo{\Phi(\bU_h,\bV_h)}>\alpha>0\quad \forall\;\bV_h\in\bm P^c_0(\tau_h)\setminus\{\bnul\},\\
\inf_{\bU_h\in\bm P^c_0(\tau_h)\setminus\{\bnul\}}\left(\sup_{\bV_h\in \bm P^c_0(\tau_h)\setminus\{\bnul\}} 
\frac{\modulo{\Phi(\bU_h,\bV_h)}}{\norm{\bU_h}{\hcurl{\D}}\norm{\bV_h}{\hcurl{\D}}}\right)\geq C>0,
\end{gather*}
on all meshes $\tau_h\in\{\tau_{h_{i}}\}_{i\in\IN}$.
\end{assumption}

From Assumptions \ref{ass:sesqform} and \ref{ass:sesqformdis}, the continuity
of the sesquilinear and antilinear forms in \eqref{eq:Phi} and \eqref{eq:rhs} and
the fact that $\bm P^c_0(\tau_h)\subset\hocurl{\D}$ we see that both Problem
\ref{prob:varprob} and \ref{prob:varprobdis} have unique solutions
$\bE\in\hocurl{\D}$ and $\bE_h\in\bm P^c_0(\tau_h)$, respectively. Furthermore,
if $\bE_{h_i}\in\bm P^c_0(\tau_{h_i})$ solves Problem \ref{prob:varprobdis} on
$\bm P^c_0(\tau_{h_i})$ then
\begin{align*}
\norm{\bE-\bE_{h_i}}{\hcurl{\D}}\leq
C\inf_{\bU_{h_i}\in\bm P^c_0(\tau_{h_i})}\norm{\bE-\bU_{h_i}}{\hcurl{\D}},
\end{align*}
for some $C>0$ independent of the mesh-size.

\subsection{Numerical integration and main results}
\label{sec:numint}

We now introduce quadrature rules for the
numerical computation of the terms for the linear system
associated with Problem \ref{prob:varprobdis}.
\begin{definition}\label{def:quad}
For $L\in\IN$, we define $Q_\rK$, a quadrature rule over $\rK$, as
a linear functional acting on $\phi\in\C(\rK)$ in the following way:
\begin{align*}
 Q_{\breve{K}}(\phi):=
\sum_{l=1}^{L}\breve{w}_{l}\phi(\breve{\bm{b}}_{l}),
\end{align*}
where $\{\breve{w}_{l}\}_{l=1}^{L}\subset \IR$ is a set of quadrature weights
and $\{\breve{\bm{b}}_{l}\}_{l=1}^{L}\subset \rK$ is a set of quadrature points.
\end{definition}
Quadratures over arbitrary elements $K\in\tau_h$ are built from
those in Definition \ref{def:quad}, for $\phi\in\C(K)$, as follows
\begin{align}\label{eq:QK}
  Q_K(\phi):=\sum_{l=1}^L w_{l,K} \phi(\bm{b}_{l,K})\quad\text{with}\quad
  w_{l,K}:=\modulo{\det{(\mathbb{J}_K)}}\breve{w}_{l}\quad\mbox{and}\quad
  {\bm{b}}_{l,K}:=T_K(\breve{\bm{b}}_l),
\end{align}
for $T_K$ and $\mathbb{J}_K$ as in Definition
\ref{def:affinebijectivemap}.
\begin{definition}[Numeric sesquilinear and antilinear form]
\label{def:numsesqant}
Let $Q_\rK^1$, $Q_\rK^2$ and $Q_\rK^3$ be three distinct quadrature rules
as in Definition \ref{def:quad}. We denote by $\widetilde{\Phi}_h({\cdot},{\cdot})$
and $\widetilde\bF_h(\cdot)$ the perturbed, discrete, sesquilinear and antilinear
forms over fields in $\bm{P}^c(\tau_h)$, where exact integration is
replaced by numerical integration:
\begin{align*}
&\widetilde{\Phi}_h({\bU_h},{\bV_h}):=\sum_{K\in\tau_h}
Q_{K}^{1}(\mu^{-1}\curl\bU_h\cdot\curl\overline{\bV_h})
+Q_{K}^{2}(-\omega^2\epsilon\bU_h\cdot\overline{\bV_h}),\\
&\widetilde{\bF}_h({\bV_h}):=\sum_{K\in\tau_h}Q_K^3(-\imath\omega\bJ\cdot \overline{\bV_{h}}),
\end{align*}
where, for $i=1,2,3$, $Q_K^i$ is built from $Q_\rK^i$ as in \eqref{eq:QK}.
\end{definition}

\begin{problem}[Discrete numerical problem]
\label{prob:varprobnum}
Find $\widetilde{\bE}_h\in\bm P^c_0(\tau_h)$ such that,
\begin{align*}
\widetilde\Phi(\widetilde{\bE}_h,{\bV}_h)=
\widetilde\bF({\bV}_h),
\end{align*}
for all ${\bV}_h\in\bm P^c_0(\tau_h)$.
\end{problem}

Our objective is to obtain estimates for the error convergence
rates of the solution of Problem \ref{prob:varprobnum} with respect
to the solution of Problem \ref{prob:varprob}. As such, Strang's lemma
(\emph{cf}.~\cite[Sect.~4.2.4]{SauterSchwabBEM}) will be key
throughout our analysis on this and the following section.

\begin{lemma}[Strang's Lemma. Theorem 4.2.11 in \cite{SauterSchwabBEM}]
\label{lemma:Strang}
Let $\Phi$ in \eqref{eq:Phi} satisfy Assumptions \ref{ass:sesqform}
and \ref{ass:sesqformdis} and let $\bE$ and $\bE_{h_i}$ be the solutions
of Problems \ref{prob:varprob} and \ref{prob:varprobdis}. If the sequence
of sesquiliear forms $\{\widetilde\Phi_{h_i}\}_{i\in\IN}$ given by Definition
\ref{def:numsesqant} satisfies:
\begin{align*}
\modulo{\Phi(\bU_{h_i},\bV_{h_i})-\widetilde\Phi_{h_i}(\bU_{h_i},\bV_{h_i})}
\leq c h_i^r\norm{\bU_{h_i}}{\hscurl{\D}{r}}\norm{\bV_{h_i}}{\hcurl{\D}},\quad
\forall\bU_{h_i},\bV_{h_i}\in\bm P^c_0(\tau_{h_i}),
\end{align*}
for a fixed and positive constant $c$ independent of the
mesh-size, then there is some $\ell\in\IN$ such that for all
the meshes in the sequence $\{\tau_{h_{i}}\}_{i\in\IN,\ i>\ell}$
there exists a unique solution to Problem \ref{prob:varprobnum},
$\widetilde\bE_{h_i}\in\bm P^c_0(\tau_{h_i})$ and
\begin{alignat*}{2}
&\norm{\bE-\widetilde\bE_{h_i}}{\hcurl{\D}} &&\\
&\leq C_S\Bigg(
\norm{\bE-\bE_{h_i}}{\hcurl{\D}}+&&
\sup_{\bV_{h_i}\in\bm P^c_0(\tau_{h_i})\setminus\{\bnul\}}
\frac{\vert{\Phi(\bE_{h_i},\bV_{h_i})-\widetilde\Phi_{h_i}(\bE_{h_i},\bV_{h_i})}\vert}{\norm{\bV_{h_i}}{\hcurl{\D}}} \\
 & &&\qquad\qquad+\sup_{\bV_{h_i}\in\bm P^c_0(\tau_{h_i})\setminus\{\bnul\}}
\frac{\vert{\bF(\bV_{h_i})-\widetilde\bF_{{h_i}}(\bV_{h_i})}\vert}{\norm{\bV_{h_i}}{\hcurl{\D}}} \Bigg)
\end{alignat*}
for a fixed positive constant $C_S$, independent of the mesh-size.
\end{lemma}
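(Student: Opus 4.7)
The plan is to follow the classical Strang-type argument, adapted to the non-coercive but inf-sup-stable Maxwell setting, proceeding in three stages: (i) transfer the discrete inf-sup condition of $\Phi$ to the perturbed form $\widetilde{\Phi}_{h_i}$ on sufficiently fine meshes, thereby defining the threshold $\ell$ and ensuring existence and uniqueness of $\widetilde{\bE}_{h_i}$ via the finite-dimensional Banach-Necas-Babuska theorem; (ii) split the global error $\bE-\widetilde{\bE}_{h_i}$ by the triangle inequality into the conforming Galerkin error $\bE-\bE_{h_i}$ and a discrete consistency residual $\bE_{h_i}-\widetilde{\bE}_{h_i}$; (iii) identify the residual using Galerkin orthogonality on the conforming space $\bm P^c_0(\tau_{h_i}) \subset \hocurl{\D}$.

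For stage (i), the consistency hypothesis, combined with the inverse inequality in Lemma \ref{lemma:inverseineq} (extended to fractional orders by real interpolation between Sobolev spaces, as done in the proof of Lemma \ref{lemma:IBound}), allows one to rewrite the perturbation bound on $\bm P^c_0(\tau_{h_i}) \times \bm P^c_0(\tau_{h_i})$ purely in terms of the $\hcurl{\D}$-norm, yielding
\[
\modulo{\Phi(\bU_{h_i},\bV_{h_i})-\widetilde\Phi_{h_i}(\bU_{h_i},\bV_{h_i})} \leq \epsilon(h_i)\, \norm{\bU_{h_i}}{\hcurl{\D}} \norm{\bV_{h_i}}{\hcurl{\D}}
\]
with $\epsilon(h_i)\to 0$ as $h_i\to 0$. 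Choosing $\ell$ so that $\epsilon(h_i) < C/2$ for all $i > \ell$, and selecting for each $\bU_{h_i}$ a near-optimal test function realising the sup in Assumption \ref{ass:sesqformdis}, the reverse triangle inequality shows that $\widetilde\Phi_{h_i}$ satisfies a discrete inf-sup condition with constant at least $C/2$, yielding well-posedness of Problem \ref{prob:varprobnum}.

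For stages (ii)-(iii), I would begin with $\norm{\bE - \widetilde{\bE}_{h_i}}{\hcurl{\D}} \leq \norm{\bE - \bE_{h_i}}{\hcurl{\D}} + \norm{\bE_{h_i} - \widetilde{\bE}_{h_i}}{\hcurl{\D}}$, apply the freshly obtained discrete inf-sup of $\widetilde\Phi_{h_i}$ to the second summand, and rewrite the residual as
\[
\widetilde\Phi_{h_i}(\bE_{h_i} - \widetilde{\bE}_{h_i}, \bV_{h_i}) = \left[\widetilde\Phi_{h_i}(\bE_{h_i}, \bV_{h_i}) - \Phi(\bE_{h_i}, \bV_{h_i})\right] + \left[\bF(\bV_{h_i}) - \widetilde{\bF}_{h_i}(\bV_{h_i})\right],
\]
where I use $\Phi(\bE_{h_i}, \bV_{h_i}) = \bF(\bV_{h_i})$ from Problem \ref{prob:varprobdis}, valid because $\bV_{h_i} \in \bm P^c_0(\tau_{h_i}) \subset \hocurl{\D}$. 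Taking the supremum over $\bV_{h_i} \in \bm P^c_0(\tau_{h_i}) \setminus \{\bnul\}$ then produces the three summands displayed in the conclusion, with $C_S$ absorbing the factor $2/C$ and the constant multiplying the conforming error.

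The main obstacle is stage (i). The hypothesis measures $\bU_{h_i}$ in the $\hscurl{\D}{r}$-norm, which is in general infinite for arbitrary $\bU_{h_i} \in \bm P^c_0(\tau_{h_i})$ when $r \geq \tfrac{1}{2}$, since only tangential continuity is imposed across element interfaces. A broken-Sobolev interpretation of the hypothesis combined with a careful piecewise-polynomial inverse estimate is therefore required in order to produce a genuinely $h_i$-dependent bound $\epsilon(h_i)$ vanishing as $h_i\to 0$ and to pin down the smallness threshold $\ell$.
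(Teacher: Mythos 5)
Your skeleton is the right one: the paper itself offers no proof of this lemma (it is quoted from Theorem 4.2.11 in Sauter--Schwab), and the argument behind that theorem is exactly your stages (ii)--(iii): triangle inequality, the residual identity $\widetilde\Phi_{h_i}(\bE_{h_i}-\widetilde\bE_{h_i},\bV_{h_i})=\bigl[\widetilde\Phi_{h_i}(\bE_{h_i},\bV_{h_i})-\Phi(\bE_{h_i},\bV_{h_i})\bigr]+\bigl[\bF(\bV_{h_i})-\widetilde\bF_{h_i}(\bV_{h_i})\bigr]$, which is legitimate since $\bm P^c_0(\tau_{h_i})\subset\hocurl{\D}$ makes both discrete problems available, and then the discrete inf-sup condition of the perturbed form together with Assumption \ref{ass:sesqformdis}. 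Those steps are correct as written, and your remark that $\norm{\bU_{h_i}}{\hscurl{\D}{r}}$ is generically infinite for edge functions once $r\geq\tfrac12$ (so the hypothesis must be read with elementwise, broken norms, which is also how it is verified in the proof of Theorem \ref{thm:mainresaffine}) is a fair observation about the statement itself.

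The genuine gap is in stage (i), and it is not the one you flag at the end. The elementwise inverse inequality (Lemma \ref{lemma:inverseineq}, extended to fractional order as in Lemma \ref{lemma:IBound}) gives $\norm{\bU_{h_i}}{\hscurl{\D}{r}}\leq C_{\mathrm{inv}}\,h_i^{-r}\norm{\bU_{h_i}}{\hcurl{\D}}$ in the broken sense, so inserting it into the hypothesis yields only $\epsilon(h_i)\leq c\,C_{\mathrm{inv}}$: the factor $h_i^{r}$ is cancelled exactly, $\epsilon(h_i)$ does \emph{not} tend to zero, and you cannot fix $\ell$ by demanding $\epsilon(h_i)<C/2$ unless the product $c\,C_{\mathrm{inv}}$ happens to be smaller than the inf-sup constant. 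Your proposed remedy (broken-Sobolev reading plus a ``careful piecewise-polynomial inverse estimate'') cannot repair this, because the inverse estimate is sharp for piecewise polynomials; no vanishing modulus can come out of that route. To close stage (i) one needs either the hypothesis in the form actually assumed in the cited theorem, namely a perturbation modulus $\delta_{h_i}\to 0$ measured in the $\hcurl{\D}$-norms on the discrete spaces, or an independent argument for the eventual discrete inf-sup (or uniform coercivity) of $\widetilde\Phi_{h_i}$, for instance via positivity of the quadrature weights and sufficient exactness in the coercive setting, in the spirit of Ciarlet's treatment of numerical integration. As it stands, your claim ``$\epsilon(h_i)\to 0$'' is unsubstantiated, and this is precisely the step on which the existence of $\widetilde\bE_{h_i}$ for $i>\ell$ hinges; the rest of your argument goes through verbatim once that stability is granted.
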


We are now ready to state the main result of this section which will be proven presented later on.

\begin{theorem}[Error estimate in affine meshes. Main result of Section \ref{sec:PolyDom}]
\label{thm:mainresaffine}
Let $\bE$ be the unique solution to Problem \ref{prob:varprob} and
suppose the following of $\bE$ and the data of Problem \ref{prob:varprob}:
\begin{align*}
\bE\in\hscurl{\D}{r},\quad \bJ\in\bW^{\lceil r\rceil,q}(\D),\quad\mbox{and}\quad \epsilon_{i,j},\;(\mu^{-1})_{i,j}\in W^{\lceil r\rceil,\infty}(\D),\quad\forall\;i,\;j\in\{1,2,3\}
\end{align*}
for some positive $r\in\IR$ and $q\in\IR$ such that $$r\leq k,\quad q>2\quad\mbox{and}\quad q\geq\frac{\lceil r\rceil}{3}.$$
Then, if quadrature rules used to build
$\widetilde\Phi$ and $\widetilde\bF$ are such that:
\begin{itemize}
\item $Q^1_{\rK}$ is exact for polynomials
of degree $k+\lceil r\rceil-2$,
\item $Q^2_{\rK}$ is exact for polynomials
of degree $k+\lceil r\rceil-1$ and
\item $Q^3_{\rK}$ is exact for polynomials
of degree $k+\lceil r\rceil-1$,
\end{itemize}
there exists some $\ell\in\IN$
such that for all $i>\ell$ there exists a unique solution
$\widetilde\bE_{h_i}\in\bm P^c_0(\tau_{h_i})$ to Problem
\ref{prob:varprobnum} and the solutions satisfy
\begin{align*}
\norm{\bE-\widetilde\bE_{h_i}}{\hcurl{\D}}\leq C_1h_{i}^r\norm{\bE}{\hscurl{\D}{r}}+C_2h_i^{\lceil r\rceil},
\end{align*}
where the positive constants $C_1$ and $C_2$ are independent of
the mesh-size, but depend on the parameters of Problem \ref{prob:varprob}
($\mu$, $\epsilon$, $\omega$, $\bJ$ and $\D$).
\end{theorem}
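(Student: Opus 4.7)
The overall plan is to apply Strang's Lemma (Lemma \ref{lemma:Strang}), which reduces the estimate of $\norm{\bE-\widetilde{\bE}_{h_i}}{\hcurl{\D}}$ to three ingredients: the Galerkin error $\norm{\bE-\bE_{h_i}}{\hcurl{\D}}$, the sesquilinear consistency error associated with $\widetilde{\Phi}_{h_i}$, and the antilinear consistency error associated with $\widetilde{\bF}_{h_i}$. The Galerkin term will produce the $C_1 h_i^r$ summand, while the two quadrature perturbations will together produce $C_2 h_i^{\lceil r\rceil}$. The threshold $\ell$ appears because Strang's Lemma requires the perturbed form to remain uniformly inf-sup stable, which follows automatically once the consistency bound is small compared to the inf-sup constant of $\Phi$ furnished by Assumption \ref{ass:sesqformdis}.

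For the Galerkin term I would apply, elementwise, the curl-conforming quasi-interpolant $\mathcal{I}^{\#,c}_K$ of Lemma \ref{lemma:quasiInt} to $\bE$. Its commutation with $\curl$ (and the preservation of $\bm P^c_K$) together with the error estimate at $m=0$ applied both to $\bE$ and to $\curl\bE$, summed over $K\in\tau_{h_i}$, yield $\norm{\bE-\mathcal{I}^{\#,c}\bE}{\hcurl{\D}}\leq Ch_i^r\norm{\bE}{\hscurl{\D}{r}}$. C\'ea-type quasi-optimality (consequence of Assumption \ref{ass:sesqformdis}) then transfers this bound to $\bE_{h_i}$.

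The heart of the argument is the quadrature consistency estimate, which I would handle via the classical Ciarlet strategy adapted to curl-conforming elements. Fix $\bU_{h_i},\bV_{h_i}\in\bm P^c_0(\tau_{h_i})$ and consider the curl-curl contribution on a single $K\in\tau_{h_i}$. Pulling back to $\rK$ via $T_K$, the local error equals $\modulo{\det(\mathbb{J}_K)}\cdot E_{\rK}(\phi_K)$, where $E_\rK := \int_\rK (\cdot)\,\d\bm x-Q^1_\rK(\cdot)$ is the reference quadrature error functional and $\phi_K:=(\mu^{-1}\circ T_K)\cdot((\curl\bU_{h_i}\cdot\curl\overline{\bV_{h_i}})\circ T_K)$. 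The hypothesis $q>2$ guarantees the embedding $W^{k+\lceil r\rceil-1,q}(\rK)\hookrightarrow C(\rK)$ needed to make sense of $E_\rK$; since $Q^1_\rK$ is exact on polynomials of degree $k+\lceil r\rceil-2$, the Bramble-Hilbert lemma furnishes
\[
\modulo{E_\rK(\phi_K)} \leq C\,\seminorm{\phi_K}{W^{k+\lceil r\rceil-1,q}(\rK)}.
\]
Expanding by Leibniz, the polynomial factor has total degree $2(k-1)$, so its derivatives of order exceeding $2k-2$ vanish and the remaining ones are controlled via the inverse inequality of Lemma \ref{lemma:inverseineq}, while derivatives of $\mu^{-1}\circ T_K$ are controlled through the chain rule and the $W^{\lceil r\rceil,\infty}$-regularity of $\mu^{-1}$, each differentiation contributing a factor $\|\mathbb{J}_K\|\leq c^\sharp h_i$. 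Careful bookkeeping reveals that the dominant scaling is $h_i^{\lceil r\rceil}$. Transferring back from $\rK$ to $K$ via \eqref{eq:struct1K}--\eqref{eq:struct2K} and \eqref{eq:JacboundK}, summing over $K$ and applying a discrete Cauchy-Schwarz inequality, the curl-curl part of $\modulo{\Phi(\bU_{h_i},\bV_{h_i})-\widetilde{\Phi}_{h_i}(\bU_{h_i},\bV_{h_i})}$ is bounded by $Ch_i^{\lceil r\rceil}\norm{\bU_{h_i}}{\hcurl{\D}}\norm{\bV_{h_i}}{\hcurl{\D}}$. The $\epsilon$-mass term is treated identically using $Q^2_\rK$: its polynomial factor now has degree $2k$ (no $\curl$ to drop a degree), which is precisely why the required exactness climbs by one to $k+\lceil r\rceil-1$. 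The antilinear perturbation on $\widetilde{\bF}_{h_i}$ follows the same template, with the $W^{\lceil r\rceil,\infty}$ bound on $\mu^{-1}$ replaced by the $W^{\lceil r\rceil,q}$ regularity of $\bJ$ and a H\"older dual pairing.

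The main obstacle is precisely this bookkeeping: one must track how the factors of $h_i$ emerging from Leibniz's rule, the inverse inequalities of Lemma \ref{lemma:inverseineq}, the Jacobian bounds \eqref{eq:JacboundK}, and the pullback formulas \eqref{eq:struct1K}--\eqref{eq:struct2K} combine to produce exactly the exponent $\lceil r\rceil$ under the stated minimal quadrature exactness. Once the three ingredients are in place, Strang's Lemma delivers existence and uniqueness of $\widetilde{\bE}_{h_i}$ for all $i>\ell$ and combines the estimates into the announced $\norm{\bE-\widetilde{\bE}_{h_i}}{\hcurl{\D}}\leq C_1 h_i^r\norm{\bE}{\hscurl{\D}{r}}+C_2 h_i^{\lceil r\rceil}$.
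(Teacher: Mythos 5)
Your overall architecture (Strang's Lemma plus a Galerkin estimate plus two quadrature consistency estimates) matches the paper, but the central consistency step contains a genuine error that breaks the argument whenever $\lceil r\rceil<k$. First, you apply the Bramble--Hilbert lemma to the \emph{full} integrand $\phi_K=(\mu^{-1}\circ T_K)\,\bigl((\curl\bU_{h}\cdot\curl\overline{\bV_{h}})\circ T_K\bigr)$ in $W^{k+\lceil r\rceil-1,q}(\rK)$; this is not admissible, because $\mu^{-1}$ (and $\epsilon$) are only assumed to lie in $W^{\lceil r\rceil,\infty}$, so for $k\geq 2$ the product simply does not possess $k+\lceil r\rceil-1$ derivatives. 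The paper avoids this by fixing the polynomial factor and applying Bramble--Hilbert only in the coefficient slot, i.e.\ to the functional $\bm\phi\mapsto\cE_{\rK}(\bm\phi\cdot\bV_h)$, which vanishes on $\mathbb{P}_{\lceil r\rceil-1}$ (Lemma \ref{lem:ConErrK}). Second, and more fundamentally, the bound you assert, namely that the curl--curl consistency error is controlled by $Ch_i^{\lceil r\rceil}\norm{\bU_{h_i}}{\hcurl{\D}}\norm{\bV_{h_i}}{\hcurl{\D}}$ for \emph{all} discrete pairs, is false under the stated minimal exactness (degree $k+\lceil r\rceil-2$) when $\lceil r\rceil<k$: a scaling count on one element (Jacobian factor $h^3$, pullback factor $h^{\lceil r\rceil+\dots}$, and two inverse estimates needed to reduce the surviving derivatives of the degree-$2(k-1)$ polynomial factor to $L^2$ norms) shows the $h$-gain is exactly cancelled, and the generic local error is of order one relative to $\norm{\curl\bU_h}{0,K}\norm{\curl\bV_h}{0,K}$. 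The correct local estimate necessarily carries the higher broken norm, $Ch^{\lceil r\rceil}\norm{\curl\bU_h}{\lceil r\rceil,K}\norm{\curl\bV_h}{0,K}$ (Theorem \ref{thm:consesq}), and this is also the form required by the hypothesis of the paper's Strang Lemma \ref{lemma:Strang}.

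Because of this, your proposal is missing the key idea that makes the proof close: one must bound $\norm{\bE_{h_i}}{\lceil r\rceil,K}$ and $\norm{\curl\bE_{h_i}}{\lceil r\rceil,K}$ for the \emph{discrete solution}. The paper does this by writing $\bE_{h_i}-\mathcal{I}^{\#,c}_K(\bE)=\mathcal{I}^{\#,c}_K(\bE_{h_i}-\bE)$ (invariance of $\bm P^c_K$ under the quasi-interpolant), then combining the inverse inequality of Lemma \ref{lemma:inverseineq}, the stability bounds of Lemma \ref{lemma:IBound}, and the a priori estimate $\norm{\bE-\bE_{h_i}}{\hcurl{\D}}\lesssim h_i^{r}\norm{\bE}{\hscurl{\D}{r}}$, yielding $\norm{\bE_{h_i}}{\lceil r\rceil,K}\lesssim h_i^{-\lceil r\rceil}\norm{\bE-\bE_{h_i}}{0,K}+h_i^{r-\lceil r\rceil}\norm{\bE}{r,K}$ and its curl analogue (using $\curl\bE_{h_i}\in\bm P^d(\tau_{h_i})$). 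Only after this step does the sesquilinear consistency term produce the $C_1h_i^{r}\norm{\bE}{\hscurl{\D}{r}}$ contribution of the theorem; note in particular that in the paper this term feeds into $C_1h_i^r$, not into $C_2h_i^{\lceil r\rceil}$ as your sketch asserts, while only the antilinear ($\bJ$) term gives $C_2h_i^{\lceil r\rceil}$. Your treatment of the right-hand side and of the Galerkin term is essentially sound, but without the coefficient-slot Bramble--Hilbert argument and the quasi-interpolation/inverse-estimate control of the discrete solution's higher norms, the claimed conclusion does not follow.
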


\subsection{Consistency error estimates and proof of Theorem \ref{thm:mainresaffine}}
We now find error estimates for the quadrature approximation given
in Definition \ref{def:quad} of integrals defining the sesquilinear and
antilinear forms in \eqref{eq:Phi} and \eqref{eq:rhs}, respectively.

We begin by stating the Bramble-Hilbert lemma
(\emph{cf.}~\cite[Theorem 4.1.3]{Ciarlet:2002aa}), which shall be
required to give error estimates to the approximation of exact
integration by numerical quadrature.

\begin{lemma}[Bramble-Hilbert]\label{lemma:BHvec}
Let $q\in\IN$ and $\cO$ be an open subset of $\IR^q$ with a
Lipschitz-continuous boundary. For some integer $k\geq 0$
and $p\in [1,\infty]$, let $\bm{f}$ be a continuous linear form
on $\bm{W}^{k+1,p}(\cO)$ with the property that
\begin{align*}
\bm{f}(\bm{q})=0\quad \forall\;\bm{q}\in \mathbb{P}_k(\cO;\IC^3).
\end{align*}
Then, there exists a constant $C_{\cO}$, depending on the domain
such that for all $\bV\in\bm{W}^{k+1,p}(\cO)$,
\begin{align*}
\modulo{\bm{f}(\bV)}
\leq C_{\cO}\norm{\bm{f}}{(\bm{W}^{k+1,p}(\cO))'}
\modulo{\bV}_{\bm{W}^{k+1,p}(\cO)}.
\end{align*}
\end{lemma}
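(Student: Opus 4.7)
The plan is to exploit the polynomial-annihilation hypothesis to swap $\bV$ for $\bV+\bm{q}$ with an optimal $\bm{q}\in\mathbb{P}_k(\cO;\IC^3)$, and thereby reduce the estimate to a Deny--Lions-type equivalence on the quotient space $\bm{W}^{k+1,p}(\cO)/\mathbb{P}_k(\cO;\IC^3)$. Since $\bm{f}$ vanishes on $\mathbb{P}_k(\cO;\IC^3)$, continuity of $\bm{f}$ yields $\modulo{\bm{f}(\bV)} = \modulo{\bm{f}(\bV+\bm{q})} \leq \norm{\bm{f}}{(\bm{W}^{k+1,p}(\cO))'}\,\norm{\bV+\bm{q}}{\bm{W}^{k+1,p}(\cO)}$ for every polynomial $\bm{q}$. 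Taking the infimum over $\bm{q}\in\mathbb{P}_k(\cO;\IC^3)$ reduces the entire lemma to establishing the quotient-space estimate
\[
\inf_{\bm{q}\in\mathbb{P}_k(\cO;\IC^3)}\norm{\bV+\bm{q}}{\bm{W}^{k+1,p}(\cO)} \leq C_\cO\,\seminorm{\bV}{\bm{W}^{k+1,p}(\cO)}, \qquad \bV\in\bm{W}^{k+1,p}(\cO).
\]

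Next, I would work in the quotient $X := \bm{W}^{k+1,p}(\cO)/\mathbb{P}_k(\cO;\IC^3)$. Because $\mathbb{P}_k(\cO;\IC^3)$ is finite-dimensional it is a closed subspace, so $X$ is a Banach space with the quotient norm $\norm{[\bV]}{X}:=\inf_{\bm{q}}\norm{\bV+\bm{q}}{\bm{W}^{k+1,p}(\cO)}$ appearing on the left above. The seminorm $\seminorm{\cdot}{\bm{W}^{k+1,p}(\cO)}$ descends to a well-defined map on $X$, since polynomials of total degree at most $k$ have vanishing weak derivatives of order $k+1$; this descended map is in fact a norm on $X$, because any function whose weak derivatives of order $k+1$ vanish on a (componentwise connected) Lipschitz domain is itself a polynomial of degree at most $k$, hence represents the zero class. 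The obvious bound $\seminorm{\bV}{\bm{W}^{k+1,p}(\cO)}\leq \norm{\bV+\bm{q}}{\bm{W}^{k+1,p}(\cO)}$ shows that the descended seminorm is continuous with respect to the quotient norm; what must be established is the reverse inequality.

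The main obstacle is precisely this converse bound, for which I would argue by contradiction. If no such $C_\cO$ exists, there is a sequence of classes $[\bV_n]\in X$ with $\norm{[\bV_n]}{X}=1$ and $\seminorm{\bV_n}{\bm{W}^{k+1,p}(\cO)}\to 0$. Choosing representatives $\bV_n+\bm{q}_n$ whose $\bm{W}^{k+1,p}(\cO)$-norms nearly attain $\norm{[\bV_n]}{X}$, the Rellich--Kondrachov compact embedding $\bm{W}^{k+1,p}(\cO)\hookrightarrow\bm{W}^{k,p}(\cO)$---which is the place where the Lipschitz boundary hypothesis enters---lets me extract a subsequence that converges in $\bm{W}^{k,p}(\cO)$. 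Combined with $\seminorm{\bV_n+\bm{q}_n}{\bm{W}^{k+1,p}(\cO)} = \seminorm{\bV_n}{\bm{W}^{k+1,p}(\cO)}\to 0$, the subsequence is Cauchy in $\bm{W}^{k+1,p}(\cO)$ and converges to some $\bV_*$ with $\seminorm{\bV_*}{\bm{W}^{k+1,p}(\cO)}=0$, so $\bV_*\in\mathbb{P}_k(\cO;\IC^3)$ and therefore $[\bV_*]=0$ in $X$, contradicting $\norm{[\bV_n]}{X}=1$. The $\IC^3$-valued setting requires no additional effort beyond reading the argument componentwise, since $\mathbb{P}_k(\cO;\IC^3)$ is still a finite-dimensional closed subspace of $\bm{W}^{k+1,p}(\cO)$.
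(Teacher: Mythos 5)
Your argument is correct and is exactly the standard proof of this lemma --- annihilation of $\mathbb{P}_k(\cO;\IC^3)$ plus the Deny--Lions quotient-norm equivalence established by compactness --- which is the proof of the result the paper cites verbatim from Ciarlet (Theorem~4.1.3 there, resting on his Theorem~3.1.1); the paper itself gives no proof, only the citation. The only point to make explicit is that the Rellich--Kondrachov step (and the identification of functions with vanishing $(k+1)$-st derivatives as polynomials) requires $\cO$ to be bounded and connected, hypotheses that are implicit in the cited source's conventions and satisfied throughout the paper.
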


The following Lemmas provide local error estimates
for the quadrature rules over arbitrary tetrahedrons $K\in\tau_h$.
Their proofs are analogous to those in \cite[Chapter 4]{Ciarlet:2002aa}
for grad-conforming finite elements.

\begin{lemma}\label{lem:ConErrK}
Let $K\in\tau_h$, $m\in\IN$ and $\bM=(M_{i,j})_{i,j=1}^3$, with
$\bM(\bx)\in\IC^{3\times 3}$, be such that
$M_{i,j}\in W^{m,\infty}(K)$ for all $i,j\in\{1,2,3\}$. If $Q_{\breve{K}}$ is
a quadrature rule as in Definition \ref{def:quad} such that it
is exact for polynomials of degree ${k+m-1}$, then the local
quadrature error (for $Q_K$ as in \eqref{eq:QK})
\begin{equation*}
\cE_K(\bM\bU_h\cdot\bV_h):=\int_{K}\bM\bU_h\cdot\bV_h\d\! \bx
-Q_{K}\left(\bM\bU_h
\cdot\bV_h\right),
\end{equation*}
is such that for all $\bU_h$, $\bV_h \in\mathbb{P}_{k}({K};\IC^{3})$
\begin{align}
\modulo{\cE_K(\bM\bU_h\cdot\bV_h)}\leq
CC_{\bM}h^{m}\norm{\bU_h}{m,K}\norm{\bV_h}{0,K},\label{eq:ErrorRes}
\end{align}
for a positive constant $C$ independent of $h$, $K$ and $\bM$ and
$$C_{\bM}:=\sum_{i,j=1}^{3}\norm{M_{i,j}}{W^{m,\infty}(K)}.$$
\end{lemma}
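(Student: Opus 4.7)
The plan is to reduce the lemma to a scalar estimate on each entry of $\bM$, pull each scalar problem back to the reference element $\breve{K}$, apply the Bramble-Hilbert lemma to exploit that the quadrature annihilates polynomials of degree $k+m-1$, and then use affine scaling to recover the $h^m$ factor together with the $L^2$-based Sobolev norms on $K$. The approach closely parallels Ciarlet's classical treatment of numerical integration for grad-conforming elements in \cite[Chap.~4]{Ciarlet:2002aa}, the only novelty being that the matrix-valued weight $\bM$ is handled entrywise.

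Concretely, I would first expand $\mathcal{E}_K(\bM \bU_h \cdot \bV_h) = \sum_{i,j=1}^{3} \mathcal{E}_K(M_{ij}\, (\bU_h)_j\, (\bV_h)_i)$ and focus on a single scalar term $\mathcal{E}_K(Muv)$ with $M \in W^{m,\infty}(K)$ and $u,v \in \mathbb{P}_k(K;\mathbb{C})$; the prefactor $C_\bM$ will then emerge from summing the nine component estimates. Using the affine change of variables $T_K$ and the definition \eqref{eq:QK} of $Q_K$, both the exact integral and the quadrature transform with the same Jacobian factor, so $\mathcal{E}_K(Muv) = |\det\mathbb{J}_K|\, \mathcal{E}_{\breve{K}}(\breve{M}\breve{u}\breve{v})$, with $\breve{M} := M\circ T_K$ and similarly for $\breve{u},\breve{v}$.

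On the reference element I would fix $\breve{v} \in \mathbb{P}_k(\breve{K};\mathbb{C})$ and view $\breve{g}\mapsto \mathcal{E}_{\breve{K}}(\breve{g}\breve{v})$ as a continuous linear functional on $W^{m,\infty}(\breve{K})$, continuity following from the embedding $W^{m,\infty}(\breve{K})\hookrightarrow C^0(\breve{K})$ and the elementary bounds on the integral and the quadrature. By the exactness hypothesis on $Q_{\breve{K}}$ this functional vanishes on $\mathbb{P}_{m-1}(\breve{K})$, because then $\breve{g}\breve{v}$ has degree at most $k+m-1$. Lemma \ref{lemma:BHvec} therefore produces an estimate of the form
\[
|\mathcal{E}_{\breve{K}}(\breve{g}\breve{v})| \;\leq\; C\,\|\breve{v}\|_{L^\infty(\breve{K})}\, |\breve{g}|_{W^{m,\infty}(\breve{K})},
\]
into which I would substitute $\breve{g} = \breve{M}\breve{u}$ and expand the right-hand seminorm by the Leibniz rule as a sum $\sum_{\beta=0}^m |\breve{M}|_{W^{\beta,\infty}(\breve{K})}\,|\breve{u}|_{W^{m-\beta,\infty}(\breve{K})}$.

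The remaining task is to translate this reference-element estimate back to $K$. The chain rule together with \eqref{eq:JacboundK} gives $|\breve{M}|_{W^{\beta,\infty}(\breve{K})} \leq Ch^\beta |M|_{W^{\beta,\infty}(K)}$ and $|\breve{u}|_{W^{m-\beta,\infty}(\breve{K})}\leq Ch^{m-\beta}|u|_{W^{m-\beta,\infty}(K)}$, while the standard polynomial inverse inequality $|u|_{W^{l,\infty}(K)}\leq Ch^{-3/2}|u|_{l,K}$ for $u\in\mathbb{P}_k(K;\mathbb{C})$---itself obtained by pulling back to $\breve{K}$, using norm equivalence on the finite-dimensional space $\mathbb{P}_k(\breve{K})$, and pushing forward---converts the $L^\infty$ seminorm of the polynomial factor into the desired $L^2$-based seminorm $|u|_{m-\beta,K}$. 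Combined with $\|\breve{v}\|_{L^\infty(\breve{K})}\leq C|\det\mathbb{J}_K|^{-1/2}\|v\|_{0,K}$ and $|\det\mathbb{J}_K|\leq Ch^3$, the $h$-powers $h^{3}\cdot h^{m}\cdot h^{-3/2}\cdot h^{-3/2}$ collapse to $h^m$, yielding the scalar estimate; summing over $(i,j)$ then produces the stated inequality. The main obstacle, and the step requiring the most care, is precisely this bookkeeping: the clean $h^m$ in the final bound emerges only through a delicate cancellation of contributions from Leibniz, the two reference-to-physical scalings, the polynomial inverse inequality, and the Jacobian volume factor in \eqref{eq:QK}, and all four sources must combine exactly for the advertised rate to appear.
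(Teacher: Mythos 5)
Your proposal is correct and takes essentially the same route as the paper: reduce to the reference element, apply the Bramble--Hilbert lemma (Lemma \ref{lemma:BHvec}) to the error functional with the degree-$k$ polynomial factor held fixed (exactness for degree $k+m-1$ making it vanish on $\mathbb{P}_{m-1}(\breve K)$), expand the weight--polynomial product by Leibniz, and return to $L^2$-based norms on $K$ through affine scaling and finite-dimensional norm equivalence. The only difference is bookkeeping: the paper performs the norm equivalence on $\breve K$ so the $\det\mathbb{J}_K$ factors cancel identically, whereas you route through a physical-element inverse inequality with explicit $h^{3}$ and $h^{-3/2}$ powers, which additionally uses the quasi-uniformity of Assumption \ref{ass:meshPoly} (already in force) for the lower bound on the element volume.
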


\begin{proof}
Let $\bm{\phi}\in \bm{W}^{m,\infty}(\breve{K})$ and
$\bV_h\in \mathbb{P}_{k}(\breve K;\IC^{3})$, then,
\begin{align*}
  \modulo{\mathcal{E}_{\breve{K}}(\bm{\phi}\cdot\bV)}&\leq
  C_{\mathcal{E}}\norm{\bm{\phi}\cdot \bV_h}{L^\infty(\breve{K})}\leq
  C_{\mathcal{E}}\norm{\bm{\phi}}{\bm{L}^\infty(\breve{K})}
  \norm{\bV_h}{\bm{L}^\infty(\breve{K})}\leq
  C_{\mathcal{E}}\norm{\bm{\phi}}{\bm{W}^{m,\infty}(\breve{K})}
  \norm{\bV_h}{0,\breve{K}},
\end{align*}
for some positive $C_{\mathcal{E}}$---depending only on
$\breve{K}$---where the last inequality follows from norm equivalence over
$\mathbb{P}_{k}(K;\IC^{3})$. For a fixed
$\bV_h\in \mathbb{P}_{k}(K;\IC^{3})$ the form
$\mathcal{E}_{\breve{K}}(\bm{\phi}\cdot\bV_h)$ is linear and bounded on
$\bm\phi\in\bm{W}^{m,\infty}(\breve{K})$ and satisfies
$\mathcal{E}_{\breve{K}}(\bm{\phi}\cdot\bV_h)=0$ for all
$\bm{\phi}\in\mathbb{P}_{m-1}(\breve{K};\IC^3)$.  By the Bramble-Hilbert Lemma
(Lemma \ref{lemma:BHvec}) there exists a positive constant $C_{\breve{K}}$ such that
\begin{align*}
  \modulo{\mathcal{E}_{\breve{K}}(\bm{\phi}\cdot\bV_h)}
  \leq C_{\breve{K}}\seminorm{\bm{\phi}}{\bm{W}^{m,\infty}(\breve{K})}
  \norm{\bV_h}{0,\breve{K}}.
\end{align*} 
Then, for any $K\in\tau_h$ and $\bU_h$, $\bV_h\in\mathbb{P}_{k}(K;\IC^{3})$,
\begin{align*}
  \modulo{\mathcal{E}_{K}(\bM\bU_h\cdot\bV_h)}
  \leq C_{\breve{K}}\modulo{\det{(\mathbb{J}_K)}}
  \seminorm{(\bM\bU_h)\circ T_K}{W^{m,\infty}(\breve{K})}
  \norm{\bV_h\circ T_K}{0,\breve{K}}.
\end{align*}
We begin by bounding
$\seminorm{(\bM\bU_h)\circ T_K}{W^{m,\infty}(\breve{K})}$. 
\begin{align}
  \seminorm{(\bM\bU_h)\circ T_K}{W^{m,\infty}(\breve{K})}&\leq
  c\sum_{i=1}^{3}\seminorm{\sum_{j=1}^{3}(M_{i,j}\circ T_K)({\bU}_{h,j}\circ T_K)}
  {W^{m,\infty}(\breve{K})},\nonumber\\
  &\leq c\sum_{i,j=1}^{3}\seminorm{(M_{i,j}\circ T_K)({\bU}_{h,j}\circ T_K)}
  {W^{m,\infty}(\breve{K})},\nonumber\\
  &\leq c\sum_{i,j=1}^{3}\sum_{n=0}^{m}\seminorm{\psi^g_K(M_{i,j})}
  {W^{m-n,\infty}(\breve{K})}\seminorm{\psi^g_K({\bU}_{h,j})}{W^{n,\infty}(\breve{K})},
  \nonumber\\
  &\leq c\sum_{i,j=1}^{3}\sum_{n=0}^{m}\seminorm{\psi^g_K(M_{i,j})}
  {W^{m-n,\infty}(\breve{K})}\seminorm{\psi^g_K({\bU}_{h,j})}{n,\breve{K}},
  \label{eq:phiequiv}\\
  &\leq c\norm{\mathbb{J}_{K}}{3\times 3}^{m}
  \modulo{\det{(\mathbb{J}_K)}}^{-\frac{1}{2}}
  \sum_{i,j=1}^{3}\sum_{n=0}^{m}\seminorm{M_{i,j}}
  {W^{m-n,\infty}({K})}\seminorm{{\bU}_{h,j}}{n,{K}},
  \label{eq:hk}\\
  &\leq c\norm{\mathbb{J}_{K}}{3\times 3}^{m}
  \modulo{\det{(\mathbb{J}_K)}}^{-\frac{1}{2}}\norm{\bU_h}{m,{K}}
  \sum_{i,j=1}^{3}\norm{M_{i,j}}{W^{m,\infty}(K)},\label{eq:auxs2}
\end{align}
where the positive constant $c$ is independent of
$K$ and may change at each step,
\eqref{eq:phiequiv} employs the equivalence of norms
in spaces of finite dimension and \eqref{eq:hk} is a
consequence of \eqref{eq:struct1K}. A similar bound
for $ \norm{\bV_h\circ T_K}{0,\breve{K}}$ may be
obtained analogously:
\begin{align}
\norm{\bV_h\circ T_K}{0,\breve{K}}\leq
c\modulo{\det{(\mathbb{J}_K)}}^{-\frac{1}{2}}\norm{\bV_h}{0,K},\label{eq:auxs3}
\end{align}
for a positive constant $c$ as before. Then,
\begin{align*}
  \modulo{\mathcal{E}_{K}(\bM\bU_h\cdot\bV_h)}
  &\leq C_{\breve{K}}\modulo{\det{(\mathbb{J}_K)}}
  \seminorm{(\bM\bU_h)\circ T_K}{W^{m,\infty}(\breve{K})}
  \norm{\bV_h\circ T_K}{0,\breve{K}}\\
  &\leq C_{\breve{K}}C_{\bM}c\norm{\mathbb{J}_{K}}{3\times 3}^{m}
  \norm{\bU_h}{m,K}
  \norm{\bV_h}{0,{K}}
  \leq CC_{\bM}h^{m}
  \norm{\bU_h}{m,K}
  \norm{\bV_h}{0,{K}},
\end{align*}
where $c$ is a positive constant independent of $K$ and $M$
and $C$ follows from combining $c$ and $C_\rK$.
\end{proof}

\begin{lemma}\label{lem:ConErrKrhs}
Let $K\in\tau_h$, $m\in\IN$ and $q\in\IR$ such that
\begin{align}
q\geq 2\quad\mbox{and}\quad q>\frac{3}{m},\label{eq:q1cond}
\end{align}
and $Q_{\breve{K}}$ be a quadrature rule as in Definition \ref{def:quad}
such that it is exact on polynomials of degree ${k+m-1}$.
Then, if $\bJ\in\bW^{m,q}(K)$,
the local quadrature error $\cE_K(\bJ\cdot\bV_h)$
(as defined in Lemma \ref{lem:ConErrK}) is such that for all
$\bV_h\in\mathbf{P}_{k}({K};\IC^{3})$
\begin{align*}
\modulo{\cE_K(\bJ\cdot\bV_h)}\leq
Ch^{m}\modulo{K}^{\frac{1}{2}-\frac{1}{q}}\norm{\bJ}{\bW^{m,q}(K)}\norm{\bV_h}{0,K},
\end{align*}
for a positive constant $C$ independent of $h$ and $K$.
\end{lemma}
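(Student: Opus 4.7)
The plan is to mirror the structure of the proof of Lemma \ref{lem:ConErrK}, with the main modification being that the $L^\infty$-bound on the coefficients is replaced by a Sobolev embedding appropriate to $\bW^{m,q}(K)$. The two hypotheses in \eqref{eq:q1cond} are exactly what is needed to make this substitution work: $q \geq 2$ lets us absorb the $L^2$-norm of $\bV_h$ into an $L^2$-pairing after using norm equivalence on finite-dimensional polynomial spaces, while $q > 3/m$ is precisely the threshold for the continuous embedding $\bW^{m,q}(\breve K) \hookrightarrow \bm{C}(\overline{\breve K})$ in dimension $3$, so that both the integral and the quadrature functional make sense on $\bW^{m,q}(\breve K)$.

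First, I would work on the reference element. For fixed $\bV_h \in \mathbb{P}_k(\breve K; \IC^3)$, consider the linear form $\bm \phi \mapsto \cE_{\breve K}(\bm\phi \cdot \bV_h)$. By the Sobolev embedding above together with norm equivalence on $\mathbb{P}_k(\breve K;\IC^3)$, this form is continuous on $\bW^{m,q}(\breve K)$ with bound proportional to $\norm{\bm \phi}{\bW^{m,q}(\breve K)} \norm{\bV_h}{0,\breve K}$. Since $Q_{\breve K}$ is exact on polynomials of degree $k+m-1$ and $\bV_h$ is of degree $\leq k$, the form vanishes for all $\bm\phi \in \mathbb{P}_{m-1}(\breve K;\IC^3)$. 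The Bramble–Hilbert Lemma \ref{lemma:BHvec} then yields
\begin{align*}
\modulo{\cE_{\breve K}(\bm\phi \cdot \bV_h)} \leq C_{\breve K}\, \seminorm{\bm\phi}{\bW^{m,q}(\breve K)}\, \norm{\bV_h}{0,\breve K}.
\end{align*}

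Next, I would pull back to an arbitrary $K \in \tau_h$. Using $\cE_K(\bJ\cdot\bV_h) = \modulo{\det(\mathbb{J}_K)}\,\cE_{\breve K}(\psi^g_K(\bJ)\cdot \psi^g_K(\bV_h))$, taking $\bm\phi = \psi^g_K(\bJ)$, and applying the above estimate, the task reduces to bounding $\seminorm{\psi^g_K(\bJ)}{\bW^{m,q}(\breve K)}$ and $\norm{\psi^g_K(\bV_h)}{0,\breve K}$. For the first, invoke \eqref{eq:struct1K} with $p = q$ (component by component) to obtain the scaling factor $\norm{\mathbb{J}_K}{\IR^{3\times3}}^{m}\modulo{\det(\mathbb{J}_K)}^{-1/q}$; for the second, the same estimate as in \eqref{eq:auxs3} gives $\modulo{\det(\mathbb{J}_K)}^{-1/2}\norm{\bV_h}{0,K}$. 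Multiplying everything together, the Jacobian factors combine as $\modulo{\det(\mathbb{J}_K)}^{1 - 1/q - 1/2} = \modulo{\det(\mathbb{J}_K)}^{1/2 - 1/q}$, which by \eqref{eq:JacboundK} equals $\modulo{K}^{1/2-1/q}$ up to a constant depending only on $\breve K$, while $\norm{\mathbb{J}_K}{\IR^{3\times3}}^m \leq (c^\sharp)^m h^m$.

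The proof is essentially routine once the right function-space framework is chosen; the only subtle point, and the step I would be most careful about, is verifying that the Sobolev-embedding threshold $mq>3$ is exactly what legitimizes the use of Bramble–Hilbert with the weaker norm $\bW^{m,q}$, and that the scaling powers of $\modulo{\det(\mathbb{J}_K)}$ combine correctly to yield the advertised $\modulo{K}^{1/2-1/q}$ factor rather than a plain constant as in Lemma \ref{lem:ConErrK}. Everything else is bookkeeping of the same flavor carried out in the previous lemma.
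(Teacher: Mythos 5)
Your proposal is correct and follows essentially the same route as the paper: the paper's proof simply repeats the argument of Lemma \ref{lem:ConErrK}, using the Sobolev embedding $\bW^{m,q}(\breve K)\hookrightarrow \bm{L}^{\infty}(\breve K)$ (valid since $mq>3$) in place of the $W^{m,\infty}$ bound, then Bramble--Hilbert and the same pullback scaling. Your explicit bookkeeping of the Jacobian powers, $\modulo{\det(\mathbb{J}_K)}^{1-1/q-1/2}=\modulo{\det(\mathbb{J}_K)}^{1/2-1/q}\simeq\modulo{K}^{1/2-1/q}$, is exactly how the stated factor arises.
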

\begin{proof}
The proof is exactly as that of Lemma \ref{lem:ConErrK} upon realizing that, thanks
to \eqref{eq:q1cond}, it holds
\begin{align*}
\norm{\bJ\circ T_{K}}{\Lp{\infty}{\rK}}
\leq c\norm{\bJ\circ T_{K}}{\bW^{m,q}(K)}
\end{align*}
for some positive $c$ independent of $K$ and the meshsize
(\emph{cf.}~\cite[Thm.~2.5]{Steinbach:2007aa} or \cite[Thm.~3.6]{Monk:2003aa}).
\end{proof}

With the previous estimates at hand, we can now prove
the following Theorems providing the necessary conditions for us
to employ Strang's Lemma (Lemma \ref{lemma:Strang}) in the
proof of Theorem \ref{thm:mainresaffine}.

\begin{theorem}[Consistency error for the sesquilinear form]
\label{thm:consesq}
Recall $k\in\IN$ as the polynomial degree of our approximation
spaces. Let $m\in\IN$ and assume the following of the quadrature
rules defining $\widetilde\Phi_{h_i}$ in Definition \ref{def:numsesqant}:
\begin{itemize}
\item The quadrature rule $Q^1_{\rK}$ is exact for polynomials
of degree $k+m-2$.
\item The quadrature rule $Q^2_{\rK}$ is exact for polynomials
of degree $k+m-1$.
\end{itemize}
Then, under Assumptions \ref{ass:polydom} and \ref{ass:meshPoly} and if the
coefficients of $\mu^{-1}$ and $\epsilon$ belong to
$W^{m,\infty}(\D)$, 
\begin{align*}
&\modulo{\Phi(\bU_{h_i},\bV_{h_i})-\widetilde{\Phi}_{h_i}(\bU_{h_i},\bV_{h_i})}\\
&\leq C_{\Phi}{h_i}^{m} \sum_{K\in\tau_{h_i}}\left(C_{\mu^{-1}}\norm{\curl\bU_{h_i}}{m,K}\norm{\curl\bV_{h_i}}{0,K}
+\omega^2C_{\epsilon}\norm{\bU_{h_i}}{m,K}\norm{\bV_{h_i}}{0,K}\right)
\end{align*}
for all $\bU_{h_i}$, $\bV_{h_i}\in\bm{P}_0^c(\tau_{h_i})$, where $C_{\mu^{-1}}$ and
$C_{\epsilon}$ are positive constants depending on $\mu^{-1}$ and
$\epsilon$, and $C_{\Phi}$ is a positive constant independent of the mesh sizes $\{h_i\}_{i\in\IN}$.
\end{theorem}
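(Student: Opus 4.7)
The plan is to decompose the consistency error into two element-wise contributions (one per quadrature rule) and then apply Lemma \ref{lem:ConErrK} to each, with the polynomial-degree parameter chosen according to the actual algebraic degree of the integrand pieces that lie in the finite element spaces.

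First I would write, by linearity and by summing over elements of $\tau_{h_i}$,
\begin{equation*}
\Phi(\bU_{h_i},\bV_{h_i})-\widetilde\Phi_{h_i}(\bU_{h_i},\bV_{h_i})
= \sum_{K\in\tau_{h_i}}\cE_K(\mu^{-1}\curl\bU_{h_i}\cdot\curl\overline{\bV_{h_i}})
- \omega^2\sum_{K\in\tau_{h_i}}\cE_K(\epsilon\bU_{h_i}\cdot\overline{\bV_{h_i}}),
\end{equation*}
where the first family of local errors is generated by $Q_K^1$ and the second by $Q_K^2$. The triangle inequality then reduces matters to bounding each term individually.

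Next I would apply Lemma \ref{lem:ConErrK} to the second sum with $\bM = \epsilon$ and with the roles of $\bU_h,\bV_h$ in the lemma played by $\bU_{h_i}|_K, \bV_{h_i}|_K$, which belong to $\bm P^c_K \subset \mathbb{P}_k(K;\IC^3)$. Since $Q_\rK^2$ is exact for polynomials of degree $k+m-1$ and the coefficients of $\epsilon$ lie in $W^{m,\infty}(\D)$, the lemma yields
\begin{equation*}
|\cE_K(\epsilon\bU_{h_i}\cdot\overline{\bV_{h_i}})|
\leq C\,C_\epsilon\, h_i^{m}\,\norm{\bU_{h_i}}{m,K}\norm{\bV_{h_i}}{0,K},
\end{equation*}
with $C_\epsilon := \sum_{i,j=1}^3\norm{\epsilon_{i,j}}{W^{m,\infty}(K)}$. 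For the curl term, the key observation is that $\curl\bU_{h_i}|_K$ and $\curl\bV_{h_i}|_K$ belong to $\mathbb{P}_{k-1}(K;\IC^3)$ by the structure of $\bm P^c_K$. I would therefore re-apply the argument of Lemma \ref{lem:ConErrK} with the polynomial-degree parameter shifted from $k$ to $k-1$ — the proof of that lemma does not use $k$ beyond defining the ambient polynomial space, so it carries over verbatim. The exactness requirement becomes $(k-1)+m-1 = k+m-2$, which is exactly the assumption on $Q_\rK^1$. With $\bM = \mu^{-1}$ this gives
\begin{equation*}
|\cE_K(\mu^{-1}\curl\bU_{h_i}\cdot\curl\overline{\bV_{h_i}})|
\leq C\,C_{\mu^{-1}}\, h_i^{m}\,\norm{\curl\bU_{h_i}}{m,K}\norm{\curl\bV_{h_i}}{0,K},
\end{equation*}
with $C_{\mu^{-1}} := \sum_{i,j=1}^3\norm{(\mu^{-1})_{i,j}}{W^{m,\infty}(K)}$.

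Finally I would assemble the two bounds, factor out the global constant $C_\Phi$ (the maximum of the two constants arising in Lemma \ref{lem:ConErrK} applied with degrees $k-1$ and $k$), and sum over $K\in\tau_{h_i}$ to obtain the stated estimate. No obstacle of substance is expected: the whole argument is a localization-plus-Bramble–Hilbert template, already contained in Lemma \ref{lem:ConErrK}. The only point that deserves care is the bookkeeping of the polynomial degrees — recognizing that the curl-curl contribution involves polynomials of one degree less than the mass contribution is exactly what explains the asymmetry $k+m-2$ versus $k+m-1$ in the hypotheses on $Q_\rK^1$ and $Q_\rK^2$.
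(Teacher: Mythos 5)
Your proposal is correct and follows essentially the same route as the paper: split the error element-wise, note that $\bU_{h_i}\vert_K,\bV_{h_i}\vert_K\in\mathbb{P}_{k}(K;\IC^3)$ while $\curl\bU_{h_i}\vert_K,\curl\bV_{h_i}\vert_K\in\mathbb{P}_{k-1}(K;\IC^3)$, and apply Lemma \ref{lem:ConErrK} with $\bM=\epsilon$ (degree $k$, exactness $k+m-1$) and with $\bM=\mu^{-1}$ (degree $k-1$, exactness $k+m-2$). The only cosmetic difference is that the paper takes $C_{\mu^{-1}}$ and $C_{\epsilon}$ as global $W^{m,\infty}(\D)$ sums rather than your element-wise versions, which you would bound by the global ones anyway.
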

\begin{proof}

The result comes from noticing that, for
all $\bU_h$ and $\bV_h\in\bm P^c_0(\tau_h)$
and all $K\in\tau_h$,
\begin{gather*}
\bV_h\vert_{K}\in\mathbb{P}_{k}(K;\IC^3),\quad\curl\bV_h\vert_{K}\in\mathbb{P}_{k-1}(K;\IC^3),\quad\mbox{and}\\
\modulo{\Phi(\bU_h,\bV_h)-\widetilde{\Phi}_h(\bU_h,\bV_h)}
\leq\sum_{K\in\tau_h}\modulo{\cE_K(\mu^{-1}\curl\bU_h\cdot\curl\bV_h)}
+\omega^2\modulo{\cE_K(\epsilon\bU_h\cdot\bV_h)},
\end{gather*}
where $\cE_K(\cdot,\cdot)$ is as in Lemma \ref{lem:ConErrK} and 
\begin{align*}
C_{\mu^{-1}}:=\sum_{i,j=1}^3\norm{(\mu^{-1})_{i,j}}{W^{m,\infty}(D)}\quad\mbox{and}\quad
C_{\epsilon}:=\sum_{i,j=1}^3\norm{\epsilon_{i,j}}{W^{m,\infty}(D)}.
\end{align*}
\end{proof}
\begin{theorem}[Consistency error for the antilinear form]
\label{thm:conant}
Recall $k\in\IN$ as the polynomial degree of our approximation
spaces. Let $m\in\IN$ and assume the quadrature rule $Q^3_{\breve{K}}$
from Definition \ref{def:numsesqant} is exact for polynomials of degree
$k+m-1$. Then, under Assumptions
\ref{ass:polydom} and \ref{ass:meshPoly} and if $\bJ$ is such that
$\bJ\in \bW^{m,q}(D)$ for some $q\in\IR$ such that $q > \frac{3}{m}$
and $q\geq 2$, then
\begin{align*}
\modulo{\bF(\bV_{h_i})-\widetilde{\bF}(\bV_{h_i})}\leq
C_{\bF} \omega {h_i}^{m} \modulo{\D}^{\frac{1}{2}-\frac{1}{q}}
\norm{\bJ}{\bW^{m,q}(\D)}\norm{\bV_{h_i}}{0,\D}
\end{align*}
for all $\bV_{h_i}\in\bm{P}_0^c(\tau_{h_i})$, for a positive constant
$C_{\bF}$ independent of the mesh sizes $\{h_i\}_{i\in\IN}$.
\end{theorem}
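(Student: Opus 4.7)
The plan is to mirror the proof of Theorem \ref{thm:consesq}, splitting the global quadrature error element-by-element, applying a single-element bound, and then summing. Concretely, I would first write
\begin{align*}
\bF(\bV_{h_i}) - \widetilde{\bF}_{h_i}(\bV_{h_i}) = -\imath\omega \sum_{K \in \tau_{h_i}} \cE_K(\bJ \cdot \overline{\bV_{h_i}}),
\end{align*}
where $\cE_K$ denotes the local quadrature error introduced in Lemma \ref{lem:ConErrK}. Since $\bV_{h_i}|_K \in \bm{P}^c_K \subset \mathbb{P}_k(K;\IC^3)$ and $Q^3_{\rK}$ is exact on polynomials of degree $k+m-1$, and since $q \geq 2$ and $q > 3/m$ by hypothesis, Lemma \ref{lem:ConErrKrhs} applies elementwise and yields
\begin{align*}
|\cE_K(\bJ\cdot\overline{\bV_{h_i}})| \leq C\, h_i^m\, |K|^{\frac{1}{2}-\frac{1}{q}}\, \norm{\bJ}{\bW^{m,q}(K)}\, \norm{\bV_{h_i}}{0,K},
\end{align*}
with $C$ independent of $K$ and $h_i$.

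The remaining task is purely combinatorial: reassembling the local bounds into the stated global inequality. For $q > 2$ I would apply the three-factor discrete Hölder inequality with exponents $p_1 = 2q/(q-2)$, $p_2 = q$ and $p_3 = 2$, which satisfy $1/p_1 + 1/p_2 + 1/p_3 = 1$, to the sum $\sum_K |K|^{\frac{1}{2}-\frac{1}{q}} \norm{\bJ}{\bW^{m,q}(K)} \norm{\bV_{h_i}}{0,K}$. The exponent on $|K|$ is chosen precisely so that $\big(\sum_K |K|^{(\frac{1}{2}-\frac{1}{q}) p_1}\big)^{1/p_1}$ collapses to $\big(\sum_K |K|\big)^{\frac{1}{2}-\frac{1}{q}} = |\D|^{\frac{1}{2}-\frac{1}{q}}$; additivity of Sobolev norms over the mesh partition (for $q < \infty$) gives $\big(\sum_K \norm{\bJ}{\bW^{m,q}(K)}^q\big)^{1/q} = \norm{\bJ}{\bW^{m,q}(\D)}$; and Pythagoras gives $\big(\sum_K \norm{\bV_{h_i}}{0,K}^2\big)^{1/2} = \norm{\bV_{h_i}}{0,\D}$. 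The borderline case $q = 2$ degenerates to a plain Cauchy--Schwarz in the sums over $\bJ$ and $\bV_{h_i}$, as the factors $|K|^{\frac{1}{2}-\frac{1}{q}}$ collapse to $1$ and still assemble to $|\D|^{\frac{1}{2}-\frac{1}{q}} = 1$.

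I do not foresee any genuine obstacle: the Bramble--Hilbert argument and the change of variables to $\rK$ have already been absorbed into Lemma \ref{lem:ConErrKrhs}, and so the only step requiring modest care is picking the Hölder exponents that match the claimed $|\D|^{\frac{1}{2}-\frac{1}{q}}$ prefactor. Absorbing $C$ and $\omega$ into $C_{\bF}$—which depends only on the reference element and on the quadrature rule, and in particular is independent of $h_i$—then gives the stated estimate.
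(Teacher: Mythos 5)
Your proposal is correct and follows essentially the same route as the paper: the paper's proof of Theorem \ref{thm:conant} simply invokes the argument of Theorem \ref{thm:consesq} (elementwise splitting plus the local estimate, here Lemma \ref{lem:ConErrKrhs}) together with Hölder's inequality to reassemble the sum, which is precisely the three-factor Hölder step you spell out, with the exponents $\frac{2q}{q-2}$, $q$, $2$ chosen correctly to produce the $\modulo{\D}^{\frac{1}{2}-\frac{1}{q}}$ prefactor. Your treatment of the borderline case $q=2$ and the absorption of $\omega$ into the constant are consistent with the paper's statement.
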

\begin{proof}
The result follows analogously from that of Theorem \ref{thm:consesq}
and Hölder's inequality.
\end{proof}
We are now ready to present a proof for the main result of this
section.
\begin{proof}[Proof of Theorem \ref{thm:mainresaffine}]
From our assumptions and Theorem \ref{thm:consesq} we can employ 
Strang's Lemma (Lemma \ref{lemma:Strang}), since
\begin{align*}
&\modulo{\Phi(\bU_{h_i},\bV_{h_i})-\widetilde{\Phi}_{h_i}(\bU_{h_i},\bV_{h_i})}\\
&\leq C_{{\Phi}}{h_i}^{\lceil r\rceil} \sum_{K\in\tau_{h_i}}\left(C_{\mu^{-1}}\norm{\curl\bU_{h_i}}{{\lceil r\rceil},K}\norm{\curl\bV_{h_i}}{0,K}
+C_{\epsilon}\norm{\bU_{h_i}}{{\lceil r\rceil},K}\norm{\bV_{h_i}}{0,K}\right)\\
&\leq C_{{\Phi}}(C_{\mu^{-1}}+C_{\epsilon}){h_i}^{\lceil r\rceil} \left(\norm{\curl\bU_{h_i}}{{\lceil r\rceil},\D}\norm{\curl\bV_{h_i}}{0,\D}
+\norm{\bU_{h_i}}{{\lceil r\rceil},\D}\norm{\bV_{h_i}}{0,\D}\right)\\
&\leq C_{{\Phi}}(C_{\mu^{-1}}+C_{\epsilon}){h_i}^{\lceil r\rceil} \norm{\bU_{h_i}}{\hscurl{\D}{{\lceil r\rceil}}}\norm{\bV_{h_i}}{\hcurl{\D}}.
\end{align*}
Hence, there is some $\ell\in\IN$ so that for all
$i\in\IN$ with $i\geq \ell$ there exists a
unique solution to Problem \ref{prob:varprobnum} 
$\widetilde\bE_{h_i}\in\bm{P}^c_0(\tau_{h_i})$ that satisfies
\begin{align*}
\norm{\bE-\widetilde\bE_{h_i}}{\hcurl{D}}\leq C_{S}\Bigg{(}\norm{\bE-\bE_{h_i}}{\hcurl{\D}}+\sup_{\bV_{h_i}\in\bm P^c_0(\tau_{h_i})}\frac{\vert{\Phi(\bE_{h_i},\bV_{h_i})-\widetilde\Phi_{h_i}(\bE_{h_i},\bV_{h_i})}\vert}{\norm{\bV_{h_i}}{\hcurl{\D}}}\\
 +C_{\bF}\omega h_i^{\lceil r\rceil}\modulo{\D}^{\frac{1}{2}-\frac{1}{q}}\norm{\bJ}{\bW^{\lceil r \rceil,q}(\D)} \Bigg{)},
\end{align*}
where $C_S$ follows from Strang's Lemma, $C_{\bF}$ follows
from Theorem \ref{thm:conant} and $\bE_{h_i}\in\bm P^c_0(\tau_{h_i})$ is the
unique discrete solution to Problem \ref{prob:varprobdis}.
From \cite[Thm.~3.3]{ern2018analysis} we see that
\begin{align*}
\norm{\bE-\bE_{h_i}}{\hcurl{\D}}\leq c\;h_i^r\norm{\bE}{\hscurl{\D}{r}},
\end{align*}
for some positive constant $c>0$ independent of the mesh-size.
We continue by bounding the error between $\Phi$ and $\widetilde{\Phi}_{h_i}$,
first noticing that for any $\bV_{h_i}\in\bm P^c_0(\tau_{h_i})$, there holds
\begin{align*}
&\vert{\Phi(\bE_{h_i},\bV_{h_i})-\widetilde\Phi_{h_i}(\bE_{h_i},\bV_{h_i})}\vert\\
&\leq C_{{\Phi}}h^{\lceil r\rceil}_i \sum_{K\in\tau_{h_i}}\left(C_{\mu^{-1}}\norm{\curl\bE_{h_i}}{{\lceil r\rceil},K}\norm{\curl\bV_{h_i}}{0,K}
+\omega^2C_{\epsilon}\norm{\bE_{h_i}}{{\lceil r\rceil},K}\norm{\bV_{h_i}}{0,K}\right).
\end{align*}
For arbitrary $K\in\tau_{h_i}$ and sequentially employing Lemmas
\ref{lemma:quasiInt}, \ref{lemma:IBound} and \ref{lemma:inverseineq}, it holds
\begin{align*}
\norm{\bE_{h_i}}{\lceil r \rceil,K}&\leq\norm{\bE_{h_i}-\mathcal{I}^{\#,c}_{K}(\bE)}{\lceil r \rceil,K}+\norm{\mathcal{I}^{\#,c}_{K}(\bE)}{\lceil r \rceil,K}\\
&\leq\norm{\mathcal{I}^{\#,c}_{K}(\bE_{h_i}-\bE)}{\lceil r \rceil,K}+c_{1}h_i^{r-\lceil r \rceil}\norm{\bE}{ r ,K}\\
&\leq c_2h_{i}^{-\lceil r \rceil}\norm{\mathcal{I}^{\#,c}_{K}(\bE_{h_i}-\bE)}{0,K}+c_1h_i^{r-\lceil r \rceil}\norm{\bE}{ r ,K}\\
&\leq c_2h_{i}^{-\lceil r \rceil}\norm{(\bE_{h_i}-\bE)}{0,K}+c_1h_i^{r-\lceil r \rceil}\norm{\bE}{ r ,K},\\
&\leq c\left(h_{i}^{-\lceil r \rceil}\norm{(\bE_{h_i}-\bE)}{0,K}+h_i^{r-\lceil r \rceil}\norm{\bE}{ r ,K}\right),
\end{align*}
where $c=c_1+c_2$ and the positive constants
$c_1$ and $c_2$ come from the previously referenced
Lemmas and are independent of both the mesh-size and $r\leq k$.
Analogously and since $\curl\bE_{h_i}\in\bm P^d(\tau_{h_i})$
\cite[Lemma 5.40]{Monk:2003aa}, we have
\begin{align*}
\norm{\curl\bE_{h_i}}{\lceil r \rceil,K}
\leq c\left(h_{i}^{-\lceil r \rceil}\norm{\curl(\bE_{h_i}-\bE)}{0,K}+h_i^{r-\lceil r \rceil}\norm{\curl\bE}{ r ,K}\right),
\end{align*}
for some positive $c$ independent of $h$ and $r$.
Therefore,
\begin{align*}
&\vert{\Phi(\bE_{h_i},\bV_{h_i})-\widetilde\Phi_{h_i}(\bE_{h_i},\bV_{h_i})}\vert\\
&\leq C_{{\Phi}}(C_{\mu^{-1}}+\omega^2C_{\epsilon}){h_i}^{\lceil r\rceil} \sum_{K\in\tau_{h_i}}\left(\norm{\curl\bE_{h_i}}{{\lceil r\rceil},K}\norm{\curl\bV_{h_i}}{0,K}
+\norm{\bE_{h_i}}{{\lceil r\rceil},K}\norm{\bV_{h_i}}{0,K}\right)\\
&\leq C_{{\Phi}}(C_{\mu^{-1}}+\omega^2C_{\epsilon})c\; {h_i}^{r} \sum_{K\in\tau_{h_i}}\left(\norm{\curl\bE}{{\lceil r\rceil},K}\norm{\curl\bV_{h_i}}{0,K}
+\norm{\bE}{{\lceil r\rceil},K}\norm{\bV_{h_i}}{0,K}\right)\\
& +C_{{\Phi}}(C_{\mu^{-1}}+\omega^2C_{\epsilon})c\sum_{K\in\tau_{h_i}}\left(\norm{\curl(\bE-\bE_{h_i})}{0,K}\norm{\curl\bV_{h_i}}{0,K}
+\norm{\bE-\bE_{h_i}}{0,K}\norm{\bV_{h_i}}{0,K}\right)\\
&\leq C_{{\Phi}}(C_{\mu^{-1}}+\omega^2C_{\epsilon})c\; {h_i}^r\norm{\bE}{\hscurl{\D}{r}}\norm{\bV_{h_i}}{\hcurl{\D}},
\end{align*}
where the last inequality follows from \cite[Thm.~3.3]{ern2018analysis} and
the positive constant $c$, independent of $h$ and $r$, may
vary at each step. Finally,
\begin{align*}
\norm{\bE-\widetilde\bE_{h_i}}{\hcurl{D}}\leq C_{S}\left( c\;{h_i}^r\norm{\bE}{\hscurl{\D}{r}}+C_{{\Phi}}(C_{\mu^{-1}}+\omega^2C_{\epsilon})c\; {h_i}^r\norm{\bE}{\hscurl{\D}{r}}\right.\\
+\left.C_{\bF}\omega {h_i}^{\lceil r\rceil}\modulo{\D}^{\frac{1}{2}-\frac{1}{q}}\norm{\bJ}{\bW^{\lceil r \rceil,q}(\D)}\right)
\end{align*}
as stated.
\end{proof}

\section{Finite Elements and Consistency Error Estimates for Smooth Curved Domains}
\label{sec:CurvDom}
We now drop the requirement that $\D$ be polyhedral
(Assumption \ref{ass:polydom}). As a direct consequence
of this, it will prove impractical to generate meshes that
cover $\D$ exactly and we shall instead consider a
sequence of meshes that approximate $\D$ as the
mesh-size $h$ decreases.

\begin{assumption}
\label{ass:ContDom}
The bounded domain $\D$ is of class $\C^{\mathfrak{M}}$ for some ${\mathfrak{M}}\in\IN$.
\end{assumption}

\subsection{Finite elements}
We begin by introducing a sequence of polyhedral meshes
constructed from disjoint, matching tetrahedrons that
approximate $\D$, $\{\tau_{h_i}\}_{i\in\IN}$. As in
\cite{hernandez2003finite,lenoir1986} and
\cite[Section 1.3.2]{ern2004theory}, we will require some
assumptions from $\{\tau_{h_i}\}_{i\in\IN}$.
\begin{assumption}[Assumptions on the polyhedral meshes.]
\label{ass:meshPolyCur}
$\{\tau_{h_i}\}_{i\in\IN}$ is a sequence of affine and
quasi uniform meshes. The nodes of its boundary are
located in $\Gamma$ and the polyhedral
domain generated by each mesh, denoted
$\D^{\text{poly}}_{h_i}$ (with boundary $\Gamma^{\text{poly}}_{h_i}$), approximates $\D$ so that
$$\lim_{i\rightarrow \infty}\dist(\D,\D^{\text{poly}}_{h_i})=0.$$
\end{assumption}

As in the previous Section,
the elements of the meshes $\{\tau_{h_i}\}_{i\in\IN}$ may be
constructed through affine transformations as in Definition
\ref{def:affinebijectivemap}. We continue by introducing
curved meshes that approximate $\D$, which shall be constructed
from the polyhedral meshes $\{\tau_{h_i}\}_{i\in\IN}$.

\begin{definition}[Approximated meshes]
For each polyhedral mesh $\tau_h\in\{\tau_{h_i}\}_{i\in\IN}$,
we consider $\widetilde\tau_h$ to be the \emph{approximated mesh},
which shares its nodes with $\tau_h$, but is composed of
curved tetrahedrons that cover a domain
$\D_h$ (that approximates $\D$) exactly.
For a given $K\in\tau_h$ we refer to the element of
$\widetilde\tau_h$ that shares its nodes with $K$
as $\widetilde K$ and consider the bijective mappings
$T_{\widetilde K}:\breve{K}\mapsto\widetilde K$ 
to be polynomial.
\end{definition}

Henceforth, let $\tau_h$ and $\widetilde{\tau}_h$ be an arbitrary
meshes in $\{\tau_{h_i}\}_{i\in\IN}$ and $\{\widetilde\tau_{h_i}\}_{i\in\IN}$,
respectively.

All numerical computations are to be done on the approximated
meshes $\{\widetilde\tau_{h_i}\}_{i\in\IN}$. As such, we shall require
numerical approximations of $\Phi$ and $\bF$ on the approximated
meshes. Also notice that $\D_{h}\setminus\D$ need not be empty,
so we will also require to assume $\mu^{-1}$, $\epsilon$
and $\bJ$ to be well defined outside of $\D$.

\begin{assumption}\label{ass:holdall}
There exists a bounded domain
$\D^H\subset\IR^3$ of class
$C^{\infty}$, hold-all domain, such that
\begin{align*}
\D\subset\D^H\quad\mbox{and}\quad \D_{h_i}\subset\D^H\quad\forall\; i\in\IN.
\end{align*} 
Moreover, $\mu^{-1}$, $\epsilon$ and $\bJ$ belong to $\C^0(\D_H)$.
\end{assumption}

\begin{assumption}[Assumptions on the approximated and exact elements]
\label{ass:k_gregular}
Let $\mathfrak{K}\in\IN$ with $\mathfrak{K}<\mathfrak{M}$
from Assumption \ref{ass:ContDom}.
The family of approximate meshes
$\{\widetilde\tau_{h_i}\}_{i\in\IN}$ is assumed to be
$\mathfrak{K}$-regular, i.e.~the mappings ${T}_{\widetilde K}$,
$\widetilde K\in\widetilde\tau_h$, are $\C^{\mathfrak{K}+1}$-diffeomorphisms
that belong to $\mathbb{P}_{\mathfrak{K}}(\rK;\widetilde{K})$. Also,
the following bounds for derivatives of these transformations
hold for all $\widetilde{K}\in\widetilde\tau_h$
\begin{align*}
\sup_{\bx\in\breve{K}}\norm{D^{n}{T}_{\widetilde K}(\bx)}{}\leq C_{n}h^{n}
\quad\mbox{and}\quad\sup_{\bx\in\breve{K}}\norm{D^{n}\left({T}_{\widetilde{K}}^{-1}\right)(\bx)}{}
\leq C_{-n}h^{-n}\quad \forall\; n\in\{1,\hdots,\mathfrak{K}+1\},
\end{align*}
where $C_n$ and $C_{-n}$ are positive constants independent from
the mesh-size for all integers $n\leq \mathfrak{K}+1$, $D^{n}{T}_K$
is the Frechet derivative of order $n$ of $T_K$ and
$\Vert{D^{n}\widetilde{T}_K(\bx)}\Vert$ is the appropriate
induced norm, with functional spaces omitted for the sake of brevity.
Furthermore, we assume $$\det\mathbb{J}_{\widetilde K}(\bx)>0,$$ for all $\bx\in\rK$
and that there exists some positive $\theta\in\IR$,
independent of the mesh-size, such that for all $\widetilde K\in\widetilde\tau_h$
\begin{align*}
\frac{1}{\theta}\leq{\frac{\det \mathbb{J}_{\widetilde K}(\bx)}{\det \mathbb{J}_{\widetilde K}(\by)}}\leq \theta\quad\forall\;\bx\;,\by\in \rK.
\end{align*}
\end{assumption}

In Assumption \ref{ass:k_gregular}, $\mathfrak{K}$ represents the degree
of the polynomial approximation of $\D$. We expect the
rate with which the sequence of domains $\{\D_{h_i}\}_{i\in\IN}$
converges to $\D$ to be dependent on $\mathfrak{K}$,
so that larger $\mathfrak{K}$ will imply faster convergence, e.g.
\begin{align*}
\dist(\D,\D_{h_i})\leq C_{\mathfrak{K}}h_i^{f(\mathfrak{K})},
\end{align*}
for some strictly increasing positive function $f:\IN\to\IR$ and
a positive constant $C_{\mathfrak{K}}$ that may depend on $\mathfrak{K}$. Furthermore, notice that for any curved tetrahedron $\widetilde K$
and any $\by\in\rK$
\begin{align}
\label{eq:detKcurv}
\modulo{\widetilde K}=\int_{\widetilde K}1\d\!\bx
=\int_{\rK}\det{\mathbb{J}_{\widetilde K}(\bx)}\d\!\bx .
\end{align}
\begin{lemma}\label{lem:detJmodK}
Under Assumption \ref{ass:k_gregular} and for any
$\by\in\rK$, it holds
\begin{align*}
\frac{1}{\theta}\leq\frac{\modulo{\widetilde K}}{\modulo{\rK}\det\mathbb{J}_{\widetilde{K}}(\by)}\leq\theta.
\end{align*}
\end{lemma}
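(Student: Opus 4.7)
The plan is to proceed directly from the identity \eqref{eq:detKcurv} in a single short calculation, exploiting the uniform ratio bound on $\det \mathbb{J}_{\widetilde K}$ already built into Assumption \ref{ass:k_gregular}.

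First, I would fix an arbitrary $\by\in\rK$ and divide both sides of \eqref{eq:detKcurv} by the (strictly positive) quantity $|\rK|\det\mathbb{J}_{\widetilde K}(\by)$, yielding
\begin{equation*}
\frac{|\widetilde K|}{|\rK|\det\mathbb{J}_{\widetilde K}(\by)}
=\frac{1}{|\rK|}\int_{\rK}\frac{\det\mathbb{J}_{\widetilde K}(\bx)}{\det\mathbb{J}_{\widetilde K}(\by)}\d\!\bx .
\end{equation*}
The right-hand side is the mean value over $\rK$ of the pointwise ratio $\det\mathbb{J}_{\widetilde K}(\bx)/\det\mathbb{J}_{\widetilde K}(\by)$.

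Next, I would invoke Assumption \ref{ass:k_gregular} directly, which guarantees
\begin{equation*}
\frac{1}{\theta}\leq \frac{\det\mathbb{J}_{\widetilde K}(\bx)}{\det\mathbb{J}_{\widetilde K}(\by)}\leq \theta
\qquad\forall\,\bx,\by\in\rK.
\end{equation*}
Integrating this pointwise bound over $\rK$ and dividing by $|\rK|$ preserves the inequalities, giving exactly the claimed two-sided bound.

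There is really no obstacle here; the lemma is essentially a restatement of the uniform determinant-ratio bound of Assumption \ref{ass:k_gregular}, combined with the volume formula \eqref{eq:detKcurv}. The only thing to be careful about is to note that $\det\mathbb{J}_{\widetilde K}(\by)>0$ (also assumed) so that division is legal and inequalities are not reversed, and that the constant $\theta$ is independent of $\widetilde K$ and of the mesh-size, so the resulting bound inherits that same uniformity — which is exactly what will be needed later when this lemma is used to transfer estimates between $\widetilde K$ and $\rK$.
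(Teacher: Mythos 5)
Your proposal is correct and follows essentially the same route as the paper: both factor $\det\mathbb{J}_{\widetilde K}(\by)$ out of the volume identity \eqref{eq:detKcurv} and then apply the uniform ratio bound from Assumption \ref{ass:k_gregular}, integrated over $\rK$. Your remarks on the positivity of $\det\mathbb{J}_{\widetilde K}(\by)$ and the mesh-independence of $\theta$ are accurate and consistent with how the lemma is used later.
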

\begin{proof}
Notice that for any $\by\in\rK$ 
\begin{align*}
\int_{\rK}\det{\mathbb{J}_{\widetilde K}(\bx)}\d\!\bx=
\det{\mathbb{J}_{\widetilde K}(\by)}\int_{\rK}\frac{\det{\mathbb{J}_{\widetilde K}(\bx)}}{\det{\mathbb{J}_{\widetilde K}(\by)}}\d\!\bx.
\end{align*}
The proof then follows from our assumed bounds for
$\dfrac{\det{\mathbb{J}_{\widetilde K}(\bx)}}{\det{\mathbb{J}_{\widetilde K}(\by)}}$
and \eqref{eq:detKcurv}.
\end{proof}

As before, we consider finite elements on curved tetrahedrons
$\widetilde K\in\widetilde\tau_h$ as triples\\
$(\widetilde K,P_{\widetilde K},\Sigma_{\widetilde K})$,
so that Definition \ref{def:femtriple} remains valid on 
curved tetrahedrons. We define the curl-conforming
element on a curved tetrahedron $\widetilde K$ as
\begin{align*}
{\bm{P}}^c_{\widetilde K}&:=\{{\bm{p}}\; :\; \mathbb{J}_{\widetilde K}^{\top}(\bm{p}\circ {T}_{\widetilde{K}})\in\bm{P}^c_{\breve K}\}.
\end{align*}
The function spaces for grad- and
div-conforming finite elements are defined in a similar manner
\begin{align*}
  {P}^g_{\widetilde K}:=\{p\; :\; p\circ {T}_{\widetilde{K}}\in P^g_{\breve K}\},\qquad
  {\bm{P}}^d_{\widetilde K}:=\{\bm{p}\; :\; \det(\mathbb{J}_{\widetilde{K}})\mathbb{J}^{-1}_{\widetilde K}(\bm{p}\circ{T}_{\widetilde{K}})\in\bm{P}^d_{\breve K}\}.
\end{align*}
Discrete spaces on curved meshes are then defined as in the previous section
\begin{gather*}
P^g(\widetilde\tau_h):= \left\lbrace v_h\in\hsob{1}{\D} \ :\ 
v_h\vert_{\widetilde K}\in P^g_{\widetilde K}\quad\forall\; \widetilde K\in\widetilde\tau_h\right\rbrace,\;
P^g_0(\widetilde\tau_h):=  P^g(\widetilde\tau_h)\cap\hosob{1}{\D},\\
\bm P^c(\widetilde\tau_h):= \left\lbrace \bV_h\in\hcurl{\D} \ :\  
\bV_h\vert_{\widetilde K} \in {\bm{P}}^c_{\widetilde K}\quad\forall\; \widetilde K\in\widetilde\tau_h\right\rbrace,\; 
\bm P^c_0(\widetilde\tau_h):= \bm P^c(\widetilde\tau_h)\cap\hocurl{\D},\\
\bm P^d(\widetilde\tau_h):= \left\lbrace \bV_h\in\hdiv{\D} \ :\  
\bV_h\vert_{\widetilde K} \in {\bm{P}}^d_{\widetilde K}\quad\forall\; \widetilde K\in\widetilde\tau_h\right\rbrace,
\; \bm P^d_0(\widetilde\tau_h):= \bm P^d(\widetilde\tau_h)\cap\hodiv{\D},\\
P^b(\widetilde\tau_h):= \left\lbrace v_h\in\lp{2}{\D} \ :\ 
v_h\vert_{\widetilde K}\in {P}^b_{\widetilde K}\quad\forall\; \widetilde K\in\widetilde\tau_h  \right\rbrace.
\end{gather*}
Pullbacks from functions defined on curved tetrahedrons and triangles
are defined analogously as those in \eqref{eq:pullbacks}.
We continue by stating a property analogous to that in \eqref{eq:struct2K}
in the context of curved meshes.
\begin{lemma}[Lemma 1 in \cite{ciarlet1972combined}]\label{lem:curvineqK}
Let $p\in\IR$ and $l\in\IN_{0}$ be such that $p\geq 1$
and $l\leq \mathfrak{K}+1$.
Then, for a given $\breve{u}\in W^{s,q}(\breve K)$ and a curved
tetrahedron $\widetilde{K}\in\widetilde{\tau}_h$, the function
$u$ defined as
\begin{align*}
u:={\psi^g_{\widetilde{K}}}^{-1}(\breve{u})=\breve{u}\circ T_{\widetilde{K}}^{-1}
\end{align*}
 belongs to $W^{l,p}( \widetilde{K})$ and
 \begin{align*}
 \seminorm{\breve u}{W^{l,p}(\breve K)}\leq
 C\inf_{\bx\in\breve{K}}\modulo{\det\mathbb{J}_{\widetilde{K}}(\bx)}^{-\frac{1}{p}}h^{l}
 \norm{u}{W^{l,p}(\widetilde{K})},
 \end{align*}
 for a positive constant $C$ independent of $\widetilde K$ and
 the mesh-size.
\end{lemma}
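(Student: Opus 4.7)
The plan is to write $\breve u = u\circ T_{\widetilde K}$, apply the multivariate chain rule (Fa\`a di Bruno's formula) to each mixed derivative of order $l$, and then combine the derivative bounds on $T_{\widetilde K}$ from Assumption \ref{ass:k_gregular} with a change of variables from $\rK$ back to $\widetilde K$. The hypothesis $l\leq\mathfrak{K}+1$ enters precisely so that the bound $\sup_{\bx\in\rK}\|D^{n}T_{\widetilde K}(\bx)\|\leq C_{n}h^{n}$ is available for every $n\leq l$.

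First I would fix a multi-index $\alpha$ with $|\alpha|=l$ and expand
\begin{align*}
D^{\alpha}\breve u(\bx)=\sum_{\pi}c_{\pi}\,(D^{|\pi|}u)(T_{\widetilde K}(\bx))\prod_{B\in\pi}D^{\gamma_{B}}T_{\widetilde K}(\bx),
\end{align*}
where the sum ranges over partitions $\pi$ of $\alpha$ into nonzero multi-indices $\{\gamma_{B}\}_{B\in\pi}$ with $\sum_{B}\gamma_{B}=\alpha$. The combinatorial key is that $\sum_{B}|\gamma_{B}|=l$, so applying the Assumption \ref{ass:k_gregular} bound to each factor controls the product uniformly in $\bx$ by $Ch^{l}$. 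A power-mean (Jensen) inequality, used to move the bounded number of terms outside of the $p$-th power, then yields the pointwise estimate
\begin{align*}
|D^{\alpha}\breve u(\bx)|^{p}\leq Ch^{lp}\sum_{j=1}^{l}\bigl|(D^{j}u)(T_{\widetilde K}(\bx))\bigr|^{p}.
\end{align*}

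Next I would integrate this inequality over $\rK$ and change variables via $\by = T_{\widetilde K}(\bx)$, which introduces the factor $|\det\mathbb{J}_{\widetilde K}(T_{\widetilde K}^{-1}(\by))|^{-1}$. By the Jacobian ratio bound in Assumption \ref{ass:k_gregular}, this factor is dominated pointwise by $\theta\,(\inf_{\bx\in\rK}|\det\mathbb{J}_{\widetilde K}(\bx)|)^{-1}$ and can therefore be pulled out of the integral. Summing over all $\alpha$ with $|\alpha|=l$, the right-hand side becomes a constant multiple of $h^{lp}(\inf_{\bx\in\rK}|\det\mathbb{J}_{\widetilde K}(\bx)|)^{-1}\|u\|_{W^{l,p}(\widetilde K)}^{p}$, and taking $p$-th roots yields the claimed bound. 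The main obstacle will be the bookkeeping in Fa\`a di Bruno's formula, namely making rigorous the claim that every term in the expansion of $D^{\alpha}\breve u$ carries exactly $l$ orders of differentiation of $T_{\widetilde K}$ in total; this is precisely what prevents a worse power of $h$ from appearing. Once that combinatorial point is settled, the rest is a routine concatenation of the chain rule, Assumption \ref{ass:k_gregular}, and the change of variables.
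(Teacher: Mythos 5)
Your argument is correct and is essentially the standard proof of the cited result (Lemma~1 of Ciarlet--Raviart), which the paper itself does not reproduce: the multivariate chain rule with total differentiation order $l$ distributed over the factors $D^{n}T_{\widetilde K}$, the bounds of Assumption~\ref{ass:k_gregular} giving the factor $h^{l}$, and the change of variables producing $\bigl(\inf_{\bx\in\breve{K}}\det\mathbb{J}_{\widetilde{K}}(\bx)\bigr)^{-1/p}$, with the lower-order terms absorbed into the full norm $\norm{u}{W^{l,p}(\widetilde{K})}$. The only point you leave implicit is the membership claim $u\in W^{l,p}(\widetilde{K})$, which follows by the symmetric argument using the bounds on $D^{n}\bigl(T_{\widetilde{K}}^{-1}\bigr)$ in Assumption~\ref{ass:k_gregular}.
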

\subsection{Discrete variational problem}
We continue by introducing appropriate modifications of the
sesquilinear and antilinear forms considered in the previous sections.
In particular, for each $i\in\IN$ and all $\bU$, $\bV\in\hcurl{\D_{h_i}}$, we shall consider
\begin{align}
\label{eq:Phih}
\Phi_{h_i}(\bU,\bV)&:=
\int_{\D_{h_i}} \mu^{-1} \curl \mathbf{U} \cdot \curl \overline{\mathbf{V}}-
\omega^2\epsilon \mathbf{U} \cdot \overline{\mathbf{V}} \d\!\bx,\\
\label{eq:rhsh}
\bF_{h_i}(\bV)&:=-\imath\omega\int_{\D_{h_i}}\bJ\cdot\overline{\bV}\d\!\bx.
\end{align}

\begin{problem}[Discrete variational problem on curved meshes]
\label{prob:varprobdiscurv}%
Find $\bE_h\in \bm P^c_0(\widetilde\tau_h)$ such that
\begin{align*}
\Phi_h(\bE_h,\bV_h)=\bF_h(\bV_h),
\end{align*}
for all $\bV_h \in \bm P^c_0(\widetilde\tau_h)$.
\end{problem}

As with Assumptions \ref{ass:sesqform} and \ref{ass:sesqformdis},
we assume our framework to be such
that a unique discrete solution $\bE_{h}\in\bm P^c_0(\widetilde\tau_{h})$
exists for all meshes in $\{\widetilde\tau_{h_i}\}_{i\in\IN}$.

\begin{assumption}[Wellposedness on $\bm P^c_0(\widetilde\tau_h)$]
\label{ass:sesqformdiscurv}
We assume the sesquilinear forms $\{\Phi_{h_i}\}_{i\in\IN}$ in \eqref{eq:Phih}
to satisfy the following:
\begin{gather*}
\sup_{\bU_{h_i}\in \bm P^c_0(\widetilde\tau_{h_i})\setminus\{\bnul\}}\modulo{\Phi_{h_i}(\bU_{h_i},\bV_{h_i})}>\alpha>0\quad \forall\;\bV_{h_i}\in\bm P^c_0(\widetilde\tau_{h_i})\setminus\{\bnul\},\\
\inf_{\bU_{h_i}\in\bm P^c_0(\widetilde\tau_{h_i})\setminus\{\bnul\}}\left(\sup_{\bV_{h_i}\in \bm P^c_0(\tau_{h_i})\setminus\{\bnul\}} 
\frac{\modulo{\Phi_{h_i}(\bU_{h_i},\bV_{h_i})}}{\norm{\bU_{h_i}}{\hcurl{\D_{h_i}}}\norm{\bV_{h_i}}{\hcurl{\D_{h_i}}}}\right)\geq C>0,
\end{gather*}
for all $i\in\IN$.
\end{assumption}
\subsection{Numerical integration}
\label{sec:numintCurv}
We follow as in the previous section and consider numerical computation
of the integrals in Problem \ref{prob:varprobdis}. We recall the definition of our
quadrature rule over $\rK$ in Definition \ref{def:quad}
and introduce numerical integration on curved elements
$\widetilde K\in\widetilde\tau_h$ as
\begin{align}\label{eq:Qcurv}
  Q_{\widetilde K}(\phi):=\sum_{l=1}^L w_{l,\widetilde K} \phi(\bm{b}_{l,\widetilde K})\quad\text{where}\quad
  w_{l,\widetilde K}:=\modulo{\det{\mathbb{J}_{\widetilde K}(\breve{\bm{b}}_l)}}\breve{w}_{l}\quad\mbox{and}\quad
  {\bm{b}}_{l,\widetilde K}:=T_{\widetilde K}(\breve{\bm{b}}_l).
\end{align}

\begin{definition}[Numeric sesquilinear and antilinear form]
\label{def:numsesqantcurv}
Let $Q_\rK^1$, $Q_\rK^2$ and $Q_\rK^3$ be three distinct quadrature rules
as in Definition \ref{def:quad}. For each $\widetilde{\tau_h}\in\{\widetilde\tau_{h_i}\}_{i\in\IN}$, we denote by
$\widetilde{\Phi}_{h}({\cdot},{\cdot})$ and $\widetilde\bF_{h}(\cdot)$
the perturbed, discrete, sesquilinear and antilinear forms over
fields in $\bm{P}^c(\widetilde\tau_{h})$, where exact integration is
replaced by numerical integration

\begin{align*}
&\widetilde{\Phi}_{h}({\bU_{h}},{\bV_{h}}):=\sum_{\widetilde K\in\widetilde\tau_{h}}
Q_{\widetilde K}^{1}(\mu^{-1}\curl\bU_{h}\cdot\curl\overline{\bV_{h}})
+Q_{\widetilde K}^{2}(-\omega^2\epsilon\bU_{h}\cdot\overline{\bV_{h}}),\\
&\widetilde{\bF}_{h}({\bV_{h}}):=\sum_{\widetilde K\in\tau_{h}}Q_{\widetilde K}^3(-\imath\omega\bJ\cdot \overline{\bV_{h}}),
\end{align*}
where, for $i=1,2,3$, $Q_{\widetilde K}^i$ is built from $Q_\rK^i$ as in \eqref{eq:Qcurv}.
\end{definition}

\begin{problem}[Discrete numerical problem on curved meshes]
\label{prob:varprobnumcurv}
Find $\widetilde{\bE}_h\in\bm P^c_0(\widetilde{\tau}_h)$ such that,
\begin{align*}
\widetilde\Phi_{h}(\widetilde{\bE}_h,{\bV}_h)=
\widetilde\bF_{h}({\bV}_h),
\end{align*}
for all ${\bV}_h\in\bm P^c_0(\tau_h)$.
\end{problem}

Let $\bE$, $\bE_h$ and $\widetilde\bE_h$ be the respective solutions of
Problems \ref{prob:varprob}, \ref{prob:varprobdiscurv} and
\ref{prob:varprobnumcurv}. Our current objective is to study the differences
between the convergence rates of $\bE_h$ and $\widetilde\bE_h$
to (an appropriate extension to the hold all $\D^H$ of) $\bE$. The following modification
of Strang's Lemma (Lemma \ref{lemma:Strang}) will prove useful. Its proof comes
from small variations of that in \cite[Thm.~4.2.11]{SauterSchwabBEM}, so we omit it
for brevity. We shall emulate \cite{di2018third} and consider a sequence of mappings
$\{\cJ_{h_i}\}_{i\in\IN}$ such that for all $i\in\IN$
\begin{align}\label{eq:mappingsequence}
\cJ_{h_i}:\hcurl{\D}\mapsto\hcurl{\D_{h_i}}.
\end{align}
These mappings need not be linear, but it is assumed
$\cJ_{h}\bE$ possesses some information on $\bE$, so
that the computation of
\begin{align}
\label{eq:meaningfulestimate}
\norm{\cJ_{h_i}(\bE)-\widetilde\bE_{h_i}}{\hcurl{\D_{h_i}}}
\end{align}
is in some way meaningful \cite[Rmk.~9]{di2018third}.
Indeed, the estimates in \cite{ciarlet1972combined}
may be interpreted for the specific choice of $\cJ_{h_i}\bE$
as an extension to $\D^H$ of $\bE$ for all $i\in\IN$.
Notice that in \cite{di2018third}, the authors assume
\begin{align*}
\cJ_{h_i}:\hcurl{\D}\mapsto\bm P^c_0(\widetilde\tau_{h_i}),
\end{align*}
which they require to estimate \eqref{eq:meaningfulestimate}.

\begin{lemma}(Modified Strang's lemma)
\label{lemma:StrangC}
Let $\Phi$ in \eqref{eq:Phi} satisfy Assumption \ref{ass:sesqform}
and \ref{ass:sesqformdis} and let $\bE$ and $\bE_{h_i}$ be the solutions
to Problems \ref{prob:varprob} and \ref{prob:varprobdiscurv}.
If the sequence of sesquiliear forms
$\{\widetilde\Phi_{h_i}\}_{i\in\IN}$ given by
Definition \ref{def:numsesqantcurv} satisfies, for all $\bU_{h_i},\;\bV_{h_i}\in\bm P^c_0(\widetilde\tau_{h_i})$,
\begin{align*}
\modulo{\Phi_{h_i}(\bU_{h_i},\bV_{h_i})-\widetilde\Phi_{h_i}(\bU_{h_i},\bV_{h_i})}
\leq c\; h_i^m\norm{\bU_{h_i}}{\hscurl{\D_{h_i}}{m}}\norm{\bV_{h_i}}{\hcurl{\D_{h_i}}},
\end{align*}
for $m\in\IN$ and a fixed and positive constant $c$ independent of the
mesh-size, then there is some $\ell\in\IN$ such that for all
the meshes in the sequence $\{\widetilde\tau_{h_{i}}\}_{i\in\IN,\ i>\ell}$
there exists a unique solution to Problem \ref{prob:varprobnumcurv},
$\widetilde\bE_{h_i}\in\bm P^c_0(\widetilde\tau_{h_i})$ and
\begin{alignat}{2}
\begin{aligned}
&\norm{\cJ_{h_i}(\bE)-\widetilde\bE_{h_i}}{\hcurl{\D_{h_i}}} &&\\
&\leq C_S\Bigg(
\norm{\cJ_{h_i}(\bE)-\bE_{h_i}}{\hcurl{\D_{h_i}}}+&&
\sup_{\bV_{h_i}\in\bm P^c_0(\tau_{h_i})\setminus\{\bnul\}}
\frac{\vert{\Phi_{h_i}(\bE_{h_i},\bV_{h_i})-\widetilde\Phi_{h_i}(\bE_{h_i},\bV_{h_i})}\vert}{\norm{\bV_{h_i}}{\hcurl{\D_{h_i}}}} \\
 & &&\qquad+\sup_{\bV_{h_i}\in\bm P^c_0(\tau_{h_i})\setminus\{\bnul\}}
\frac{\vert{\bF_{h_i}(\bV_{h_i})-\widetilde\bF_{{h_i}}(\bV_{h_i})}\vert}{\norm{\bV_{h_i}}{\hcurl{\D_{h_i}}}} \Bigg),
\end{aligned}\label{eq:curvstrangestimate}
\end{alignat}
for a fixed positive constant $C_S$, independent of the mesh-size,
and where the sequence of mappings $\{\cJ_{h_i}\}_{i\in\IN}$ is such that
\eqref{eq:mappingsequence} holds.
\end{lemma}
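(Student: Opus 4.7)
The plan is to mimic the proof of the classical Strang's Lemma (Lemma \ref{lemma:Strang}, following \cite[Thm.~4.2.11]{SauterSchwabBEM}), adapting it to the curved setting through two modifications: the exact solution $\bE$ lives on $\D$ while $\bE_{h_i}$ and $\widetilde\bE_{h_i}$ live on the approximating domain $\D_{h_i}$, so the triangle inequality must be centered at $\cJ_{h_i}(\bE)$ rather than $\bE$; and the discrete inf-sup to be exploited is now the one for $\Phi_{h_i}$ supplied by Assumption \ref{ass:sesqformdiscurv}, which acts on $\bH(\curl;\D_{h_i})$-norms. The first step will simply be to split
\begin{equation*}
\norm{\cJ_{h_i}(\bE)-\widetilde\bE_{h_i}}{\hcurl{\D_{h_i}}} \leq \norm{\cJ_{h_i}(\bE)-\bE_{h_i}}{\hcurl{\D_{h_i}}} + \norm{\bE_{h_i}-\widetilde\bE_{h_i}}{\hcurl{\D_{h_i}}},
\end{equation*}
the first summand being exactly the first term on the right-hand side of \eqref{eq:curvstrangestimate}.

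The real work is to control $\norm{\bE_{h_i}-\widetilde\bE_{h_i}}{\hcurl{\D_{h_i}}}$. I would first establish, for $h_i$ sufficiently small, a uniform discrete inf-sup for $\widetilde\Phi_{h_i}$ on $\bm P^c_0(\widetilde\tau_{h_i})$ by perturbing the inf-sup of $\Phi_{h_i}$ from Assumption \ref{ass:sesqformdiscurv} with the consistency hypothesis; this simultaneously delivers existence and uniqueness of $\widetilde\bE_{h_i}$ for all $i\geq\ell$ with some suitable $\ell\in\IN$. With the resulting inf-sup constant $\tilde C>0$, the standard argument gives
\begin{equation*}
\norm{\bE_{h_i}-\widetilde\bE_{h_i}}{\hcurl{\D_{h_i}}} \leq \tilde C^{-1}\sup_{\bV_{h_i}\in\bm P^c_0(\widetilde\tau_{h_i})\setminus\{\bnul\}}\frac{\modulo{\widetilde\Phi_{h_i}(\bE_{h_i},\bV_{h_i})-\widetilde\bF_{h_i}(\bV_{h_i})}}{\norm{\bV_{h_i}}{\hcurl{\D_{h_i}}}}.
\end{equation*}
Using that $\bE_{h_i}$ solves Problem \ref{prob:varprobdiscurv}, the numerator decomposes as
\begin{equation*}
\widetilde\Phi_{h_i}(\bE_{h_i},\bV_{h_i})-\widetilde\bF_{h_i}(\bV_{h_i}) = [\widetilde\Phi_{h_i}-\Phi_{h_i}](\bE_{h_i},\bV_{h_i}) + [\bF_{h_i}-\widetilde\bF_{h_i}](\bV_{h_i}),
\end{equation*}
which reproduces, up to an absolute value, the remaining two suprema on the right-hand side of \eqref{eq:curvstrangestimate}. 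Combined with the triangle inequality above, this completes the bound.

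The hard part, as expected, will be the inf-sup transfer from $\Phi_{h_i}$ to $\widetilde\Phi_{h_i}$. The consistency hypothesis involves the stronger norm $\norm{\cdot}{\hscurl{\D_{h_i}}{m}}$, and a direct application of the curved-mesh analogue of the inverse inequality (Lemma \ref{lemma:inverseineq}, obtained by combining Assumption \ref{ass:k_gregular} and Lemma \ref{lem:detJmodK} with the affine estimates) only yields $\norm{\bU_{h_i}}{\hscurl{\D_{h_i}}{m}}\leq C h_i^{-m}\norm{\bU_{h_i}}{\hcurl{\D_{h_i}}}$, so the naive perturbation of the inf-sup constant is $O(1)$ rather than vanishing. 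To circumvent this one must exploit the Fredholm structure of the Maxwell sesquilinear form---decomposing it into a coercive principal part plus a compact lower-order term as envisaged in Assumption \ref{ass:sesqform}---and combine a Schatz-type duality argument with the uniqueness inherited from $\Phi_{h_i}$ to conclude existence of $\widetilde\bE_{h_i}$ via the Fredholm alternative for $h_i$ small enough. Once this transfer is secured, the rest of the argument is routine algebra, and the form of \eqref{eq:curvstrangestimate} follows directly.
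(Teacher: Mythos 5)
Your opening skeleton --- the triangle inequality centered at $\cJ_{h_i}(\bE)$, a discrete inf-sup bound for $\widetilde\Phi_{h_i}$ applied to $\bE_{h_i}-\widetilde\bE_{h_i}$, and the residual splitting $\widetilde\Phi_{h_i}(\bE_{h_i},\bV_{h_i})-\widetilde\bF_{h_i}(\bV_{h_i})=[\widetilde\Phi_{h_i}-\Phi_{h_i}](\bE_{h_i},\bV_{h_i})+[\bF_{h_i}-\widetilde\bF_{h_i}](\bV_{h_i})$ using that $\bE_{h_i}$ solves Problem \ref{prob:varprobdiscurv} --- is exactly the ``small variation'' of \cite[Thm.~4.2.11]{SauterSchwabBEM} that the paper invokes (it omits the proof for precisely this reason), so up to that point you are on track.

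The gap is in your final paragraph, i.e.\ at the one step that genuinely requires proof: the mesh-uniform stability of $\widetilde\Phi_{h_i}$ on $\bm P^c_0(\widetilde\tau_{h_i})$. Having first announced the perturbation argument, you retract it and instead invoke a coercive-plus-compact splitting, a Schatz-type duality argument and the Fredholm alternative; none of this is available here, and none of it is carried out. Assumptions \ref{ass:sesqform}, \ref{ass:sesqformdis} and \ref{ass:sesqformdiscurv} are bare inf-sup conditions: no decomposition of $\Phi$ into a coercive part plus a compact perturbation is assumed, no adjoint regularity is available, and the domains $\D_{h_i}$ change with $i$, so there is no fixed continuous operator to which a Schatz/duality argument could be anchored. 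On the finite-dimensional space $\bm P^c_0(\widetilde\tau_{h_i})$ the Fredholm alternative reduces to ``injective iff surjective'', and injectivity of the perturbed system is precisely what must be established --- it is not ``inherited'' from $\Phi_{h_i}$ without a quantitative perturbation bound, so your argument is circular. Moreover, even granting existence, your estimate for $\norm{\bE_{h_i}-\widetilde\bE_{h_i}}{\hcurl{\D_{h_i}}}$ needs an inf-sup constant $\tilde C$ for $\widetilde\Phi_{h_i}$ that is independent of the mesh-size (otherwise $C_S$ in \eqref{eq:curvstrangestimate} is not mesh-independent), and your sketch never produces one. The intended proof is the perturbation argument you abandon: the consistency hypothesis, together with the inverse estimates valid on the curved elements through Assumption \ref{ass:k_gregular} (cf.\ Lemmas \ref{lem:detJmodK} and \ref{lem:curvineqK}), controls $\widetilde\Phi_{h_i}-\Phi_{h_i}$ on the discrete space, and the threshold $\ell$ in the statement is exactly the index beyond which this perturbation is dominated by the constant $C$ of Assumption \ref{ass:sesqformdiscurv}. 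Your observation that the naive combination of the $h_i^m$ factor with the inverse inequality yields only an $O(1)$ bound is a fair remark about the sharpness of the stated hypothesis, but the appropriate remedy within this framework is to sharpen or re-balance the consistency estimate (trading part of the power of $h_i$ against the weaker norm of $\bU_{h_i}$) so that the perturbation of the discrete inf-sup is small for $i$ large --- not to import Fredholm and duality machinery that the abstract setting does not support.
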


We will focus on providing estimates for the last two terms
in the right hand side of \eqref{eq:curvstrangestimate}. The
error induced by the approximation of $\D$ by $\D_{h_i}$
(i.e.~the first term in \eqref{eq:curvstrangestimate})
lies beyond the scope of this article.  
We continue by stating the main result of this section.
\begin{theorem}[Error estimate in affine meshes. Main result of Section \ref{sec:CurvDom}]
\label{thm:mainrescurv}
Let $\bE$ be the unique solution to Problem \ref{prob:varprob} and
suppose the following of the data of Problem \ref{prob:varprob}:
\begin{align*}
\bJ\in\bW^{m,q}(\D^H),\quad\mbox{and}\quad \epsilon_{i,j},\;(\mu^{-1})_{i,j}\in W^{m,\infty}(\D^H)\quad\forall\;i,\;j\in\{1,2,3\}
\end{align*}
for some positive $m\in\IN$ and $q\in\IR$ such that $$m>1,\quad q>2\quad\mbox{and}\quad q\geq\frac{m}{3}.$$
Then, if the quadrature rules used to build
$\{\widetilde\Phi_{h_i}\}_{i\in\IN}$ and $\{\widetilde\bF_{h_i}\}_{i\in\IN}$ in
Definition \ref{def:numsesqantcurv} are such that:
\begin{itemize}
\item $Q^1_{\rK}$ is exact for polynomials
of degree $k+\mathfrak{K}+m-3$,
\item $Q^2_{\rK}$ is exact for polynomials
of degree $k+2\mathfrak{K}+m-3$ and
\item $Q^3_{\rK}$ is exact for polynomials
of degree $k+2\mathfrak{K}+m-3$,
\end{itemize}
there exists some $\ell\in\IN$
such that for all $i>\ell$ there exists a unique solution
$\widetilde\bE_{h_i}\in\bm P^c_0(\tau_{h_i})$ to Problem
\ref{prob:varprobnum} and the solutions satisfy
\begin{align*}
&\norm{\cJ_{h_i}(\bE)-\widetilde\bE_{h_i}}{\hcurl{\D_{h_i}}} 
\\
&\leq C_S\Bigg(
\norm{\cJ_{h_i}(\bE)-\bE_{h_i}}{\hcurl{\D_{h_i}}}+
C_1h^m\norm{\bE_{h_i}}{\hscurl{\D_{h_i}}{m}}+C_2h^m \Bigg),
\end{align*}

where the positive constants $C_1$ and $C_2$ are independent of
the mesh-size, but depend on the parameters of Problem \ref{prob:varprob}
($\mu$, $\epsilon$, $\omega$, $\bJ$ and $\D$).
\end{theorem}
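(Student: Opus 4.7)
The plan is to apply the modified Strang's lemma (Lemma \ref{lemma:StrangC}) directly. This reduces the theorem to establishing two consistency estimates that are the curved-mesh analogues of Theorems \ref{thm:consesq} and \ref{thm:conant}, namely
\[
\modulo{\Phi_{h_i}(\bU_{h_i},\bV_{h_i})-\widetilde\Phi_{h_i}(\bU_{h_i},\bV_{h_i})}\leq c\,h_i^m\norm{\bU_{h_i}}{\hscurl{\D_{h_i}}{m}}\norm{\bV_{h_i}}{\hcurl{\D_{h_i}}}
\]
and an analogous $\bigO(h_i^m)$ bound for $\bF_{h_i}-\widetilde\bF_{h_i}$. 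Once these are in hand, the conclusion follows immediately by invoking \eqref{eq:curvstrangestimate} and absorbing the norms of the data $\bJ$, $\mu$ and $\epsilon$ into the constants $C_1,C_2$.

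For the consistency estimates I would argue element-by-element. Transporting the local quadrature error to the reference tetrahedron $\breve K$ gives $\cE_{\widetilde K}(\phi)=\cE_{\breve K}\bigl(\det\mathbb{J}_{\widetilde K}\,(\phi\circ T_{\widetilde K})\bigr)$. For the $\mathrm{curl}$--$\mathrm{curl}$ contribution, applying the Piola identity $(\curl\bU_h)\circ T_{\widetilde K}=(\det\mathbb{J}_{\widetilde K})^{-1}\mathbb{J}_{\widetilde K}\,\curl\hat{\bU}$, with $\hat{\bU}=\psi^c_{\widetilde K}(\bU_h)\in\bm{P}^c_{\breve K}$, recasts the reference integrand as $\bm{\phi}\cdot\hat{\bm q}$, where $\bm{\phi}:=(\det\mathbb{J}_{\widetilde K})^{-1}(\mu^{-1}\circ T_{\widetilde K})\mathbb{J}_{\widetilde K}\,\curl\hat{\bU}$ lies in $\bm W^{m,\infty}(\breve K)$ and $\hat{\bm q}:=\mathbb{J}_{\widetilde K}\,\curl\hat{\bV}$ is a vector polynomial of degree $k+\mathfrak{K}-2$ (since $\mathbb{J}_{\widetilde K}$ has polynomial entries of degree $\mathfrak{K}-1$ and $\curl\hat{\bV}$ has degree $k-1$). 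The Bramble--Hilbert argument of Lemma \ref{lem:ConErrK} then produces a vanishing leading contribution provided $Q^1_{\breve K}$ is exact on polynomials of degree $(m-1)+(k+\mathfrak{K}-2)=k+\mathfrak{K}+m-3$, which matches the hypothesis.

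For the mass and load contributions I would use instead the identity $\bV_h\circ T_{\widetilde K}=(\det\mathbb{J}_{\widetilde K})^{-1}(\mathrm{adj}\,\mathbb{J}_{\widetilde K})^\top\hat{\bV}$, exposing the polynomial vector $(\mathrm{adj}\,\mathbb{J}_{\widetilde K})^\top\hat{\bV}$ of degree $k+2\mathfrak{K}-2$ (the entries of $\mathrm{adj}\,\mathbb{J}_{\widetilde K}$ being products of two entries of $\mathbb{J}_{\widetilde K}$). The same argument now forces $Q^2_{\breve K}$ and $Q^3_{\breve K}$ to be exact on polynomials of degree $(m-1)+(k+2\mathfrak{K}-2)=k+2\mathfrak{K}+m-3$, matching the two remaining hypotheses. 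Returning from $\breve K$ to $\widetilde K$ is done through Lemma \ref{lem:curvineqK} and the volume equivalence in Lemma \ref{lem:detJmodK}; summing over $\widetilde K\in\widetilde\tau_{h_i}$ and applying Cauchy--Schwarz yields both sought consistency bounds.

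The main technical obstacle will be bounding the $\bm W^{m,\infty}(\breve K)$ seminorm of the smooth factor $\bm{\phi}$ uniformly in $h$. This requires a Fa\`a di Bruno--type expansion for derivatives of the composition $\mu^{-1}\circ T_{\widetilde K}$ and of $(\det\mathbb{J}_{\widetilde K})^{-1}$, combined with the bounds $\sup_{\breve K}\norm{D^nT_{\widetilde K}}{}\leq C_nh^n$ and $\sup_{\breve K}\norm{D^nT_{\widetilde K}^{-1}}{}\leq C_{-n}h^{-n}$ from Assumption \ref{ass:k_gregular}, together with the uniform positivity of $\det\mathbb{J}_{\widetilde K}$ from Lemma \ref{lem:detJmodK}, in order to absorb the negative powers of $h$ arising from the interplay of $\mathbb{J}_{\widetilde K}$, $\mathrm{adj}\,\mathbb{J}_{\widetilde K}$ and $(\det\mathbb{J}_{\widetilde K})^{-1}$.
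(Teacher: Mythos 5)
Your proposal is correct and follows essentially the same route as the paper: the paper's proof is precisely the combination of the modified Strang lemma (Lemma \ref{lemma:StrangC}) with the curved-mesh consistency estimates of Theorems \ref{thm:consesqcurv} and \ref{thm:conantcurv}, and your element-level derivation (transport to $\breve K$, the Piola identities making $\det\mathbb{J}_{\widetilde K}\,(\curl\bV_h\circ T_{\widetilde K})\in\mathbb{P}_{k+\mathfrak{K}-2}$ and $\mathrm{Co}(\mathbb{J}_{\widetilde K})\hat{\bV}\in\mathbb{P}_{k+2\mathfrak{K}-2}$, Bramble--Hilbert, then Lemmas \ref{lem:curvineqK} and \ref{lem:detJmodK}) reproduces exactly the paper's Lemmas \ref{lem:ConErrtildeK}--\ref{lem:ConErrKrhscurv} with the same degree counts. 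Note only that your smooth factor $\bm\phi$ simplifies to $(\mu^{-1}\circ T_{\widetilde K})(\curl\bU_h\circ T_{\widetilde K})$, so the Fa\`a di Bruno expansion you anticipate is not needed, and that summing with Cauchy--Schwarz requires the choice $q=q'=2$, which is admissible precisely because $m>1$.
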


\subsection{Consistency error estimates and proof of Theorem \ref{thm:mainrescurv}}
As in Section \ref{sec:PolyDom}, we shall find error estimates for
the integrals over curved tetrahedrons $\widetilde K\in\widetilde\tau_h$.
The most notorious difference in the proofs for the following estimates
and those presented in the previous section are due to the fact that, if
$\bU_h\in\bm P^c_{\widetilde{K}}$ for some curved tetrahedron
$\widetilde K$, then $\bU_h\circ T_{\widetilde K}$ is, in general,
not a polynomial, so we can not apply the Bramble-Hilbert Lemma
as easily as before. We will see, however, that in our case
$\bU_h\circ T_{\widetilde K}\det\mathbb{J}_{\widetilde K}$ will
be a polynomial of a certain degree (higher than $k$) which will
allow us to proceed as before.

\begin{lemma}\label{lem:ConErrtildeK}
Let $\widetilde{K}\in\widetilde\tau_h$, $q\geq 1$,
$q'=\frac{q}{q-1}$, $m\in\IN$
with $m>\frac{3}{q}$ and $\bM(\bx)\in\IC^{3\times 3}$,
with $\bM=(M_{i,j})_{i,j=1}^3$,
such that $M_{i,j}\in W^{m,\infty}(\widetilde{K})$ for all $i$, $j\in\{1,2,3\}$.
If $Q_{\breve{K}}$ is a quadrature as in Definition \ref{def:quad}
which is exact for polynomials of degree ${k+2\mathfrak{K}+m-3}$
then, for all $\bU_h$, $\bV_h\in\bm{P}^c_{\widetilde{K}}$, the quadrature error
\begin{equation*}
\cE_{\widetilde K}(\bM\bU_h\cdot\bV_h):=\int_{\widetilde K}\bM\bU_h\cdot\bV_h\d\! \bx
-Q_{\widetilde{K}}\left(\bM\bU_h \cdot\bV_h\right),
\end{equation*}
is such that 
\begin{align*}
\modulo{\cE_{\widetilde{K}}(\bM\bU_h\cdot\bV_h)}\leq
CC_{\bM}h^{m}\norm{\bU_h}{\bm{W}^{m,q}(\widetilde{K})}\norm{\bV_h}{\Lp{q'}{\widetilde{K}}},
\end{align*}
where $$C_{\bM}:=\sum_{i,j=1}^{3}\norm{M_{i,j}}{W^{m,\infty}(\widetilde{K})},$$
and $C$ is a positive constant independent of $h$, $K$ and $\bM$.

\end{lemma}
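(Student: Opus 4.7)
The plan is to lift the proof of Lemma \ref{lem:ConErrK} to curved cells, contending with two new complications: $T_{\widetilde K}$ is now polynomial of degree $\mathfrak{K}$ rather than affine, and the covariant pullback $\psi^c_{\widetilde K}(\bU_h)=\mathbb{J}_{\widetilde K}^{\top}(\bU_h\circ T_{\widetilde K})$ introduces a factor $\mathbb{J}_{\widetilde K}^{-\top}$, so $\bU_h\circ T_{\widetilde K}$ is not polynomial on $\breve K$. First I would transport the quadrature error to the reference element by change of variables:
\begin{equation*}
\cE_{\widetilde K}(\bM\bU_h\cdot\bV_h)=\cE_{\breve K}(\breve\phi),\qquad \breve\phi:=\det\mathbb{J}_{\widetilde K}\,[(\bM\circ T_{\widetilde K})(\bU_h\circ T_{\widetilde K})]\cdot(\bV_h\circ T_{\widetilde K}).
\end{equation*}
The crucial algebraic step is to attach the Jacobian determinant to a single field via the identity $\det\mathbb{J}_{\widetilde K}\,(\bV_h\circ T_{\widetilde K})=\mathrm{adj}(\mathbb{J}_{\widetilde K})^{\top}\psi^c_{\widetilde K}(\bV_h)$, producing the decomposition $\breve\phi=\bm a\cdot\bm b$ with
\begin{equation*}
\bm a:=(\bM\circ T_{\widetilde K})(\bU_h\circ T_{\widetilde K}),\qquad \bm b:=\mathrm{adj}(\mathbb{J}_{\widetilde K})^{\top}\psi^c_{\widetilde K}(\bV_h).
\end{equation*}
Here $\bm b$ is a polynomial on $\breve K$ of total degree at most $k+2(\mathfrak{K}-1)=k+2\mathfrak{K}-2$, since $\psi^c_{\widetilde K}(\bV_h)\in\bm{P}^c_{\breve K}$ has degree $k$ and the entries of $\mathrm{adj}(\mathbb{J}_{\widetilde K})$ are polynomial of degree $2(\mathfrak{K}-1)$, while $\bm a$ gathers all non-polynomial but smooth content coming from $\bM$, $\bU_h$ and the pullback.

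Holding $\bm b$ fixed, I would view $\bm a\mapsto\cE_{\breve K}(\bm a\cdot\bm b)$ as a continuous linear functional on $\bm W^{m,q}(\breve K)$: the hypothesis $m>3/q$ supplies the Sobolev embedding $\bm W^{m,q}(\breve K)\hookrightarrow\Lp{\infty}{\breve K}$, after which both $\int_{\breve K}\bm a\cdot\bm b$ and $Q_{\breve K}(\bm a\cdot\bm b)$ are controlled by $\|\bm a\|_{\bm W^{m,q}(\breve K)}\|\bm b\|_{\Lp{q'}{\breve K}}$ (using norm equivalence on the finite-dimensional polynomial space containing $\bm b$). The hypothesis that $Q_{\breve K}$ is exact on polynomials of degree $k+2\mathfrak{K}+m-3$ makes this functional vanish on $\mathbb{P}_{m-1}(\breve K;\IC^3)$: for such $\bm a_0$, the product $\bm a_0\cdot\bm b$ is polynomial of degree at most $(m-1)+(k+2\mathfrak{K}-2)=k+2\mathfrak{K}+m-3$. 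The Bramble--Hilbert lemma (Lemma \ref{lemma:BHvec}) then yields
\begin{equation*}
|\cE_{\breve K}(\bm a\cdot\bm b)|\le C\,|\bm a|_{\bm W^{m,q}(\breve K)}\,\|\bm b\|_{\Lp{q'}{\breve K}}.
\end{equation*}

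The final step is to pull both factors back to the physical cell $\widetilde K$. Applying Lemma \ref{lem:curvineqK} to $\bM\bU_h$ at order $l=m$ with $p=q$, combined with the product-rule bound $\|\bM\bU_h\|_{\bm W^{m,q}(\widetilde K)}\le C\,C_{\bM}\,\|\bU_h\|_{\bm W^{m,q}(\widetilde K)}$ (valid because the entries of $\bM$ lie in $W^{m,\infty}(\widetilde K)$), I obtain
\begin{equation*}
|\bm a|_{\bm W^{m,q}(\breve K)}\le C\,C_{\bM}\,\Bigl(\inf_{\breve K}|\det\mathbb{J}_{\widetilde K}|\Bigr)^{-1/q}h^m\,\|\bU_h\|_{\bm W^{m,q}(\widetilde K)}.
\end{equation*}
Applying the same lemma to $\bV_h$ at $l=0$ with $p=q'$, together with the pointwise bound $\|\mathrm{adj}(\mathbb{J}_{\widetilde K})^{\top}\|\lesssim\sup_{\breve K}|\det\mathbb{J}_{\widetilde K}|$ furnished by Assumption \ref{ass:k_gregular}, yields the matching estimate on $\|\bm b\|_{\Lp{q'}{\breve K}}$. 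Multiplying the two bounds and invoking Lemma \ref{lem:detJmodK} to control the sup/inf ratio of $\det\mathbb{J}_{\widetilde K}$ by a mesh-independent constant reproduces the announced inequality.

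The step I expect to be most delicate is the decomposition $\breve\phi=\bm a\cdot\bm b$: $\det\mathbb{J}_{\widetilde K}$ must be attached to exactly one field so that the polynomial factor realizes precisely the degree $k+2\mathfrak{K}-2$ matched by the quadrature exactness hypothesis, and the smooth factor keeps the regularity pattern demanded by the statement (all $m$ derivatives on $\bU_h$, only $L^{q'}$ on $\bV_h$). The symmetric choice moving $\det\mathbb{J}_{\widetilde K}$ onto the $\bU_h$ side would merely swap the roles of the two norms on the right-hand side; the asymmetry of the claimed estimate fixes our choice.
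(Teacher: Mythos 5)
Your proposal is correct and follows essentially the same route as the paper's proof: change of variables to $\breve K$, attaching $\det\mathbb{J}_{\widetilde K}$ to the test factor via the cofactor/adjugate identity so that it becomes a polynomial of degree $k+2(\mathfrak{K}-1)$, Bramble--Hilbert in the smooth factor using $W^{m,q}\hookrightarrow L^{\infty}$ (hence $m>3/q$) with the exactness degree $k+2\mathfrak{K}+m-3$, and pullback to $\widetilde K$ through Lemma \ref{lem:curvineqK} and the $\theta$-bound of Assumption \ref{ass:k_gregular}. One small imprecision: the pointwise bound $\norm{\mathrm{adj}(\mathbb{J}_{\widetilde K})^{\top}}{}\lesssim \sup_{\breve K}\modulo{\det\mathbb{J}_{\widetilde K}}$ is not uniform in $h$ (it loses a factor of order $h^{-1}$, which is compensated by the $\mathbb{J}_{\widetilde K}^{\top}$ hidden inside $\psi^c_{\widetilde K}(\bV_h)$); since $\mathrm{adj}(\mathbb{J}_{\widetilde K})^{\top}\psi^c_{\widetilde K}(\bV_h)=\det\mathbb{J}_{\widetilde K}\,(\bV_h\circ T_{\widetilde K})$, the cleaner step---and the one the paper takes---is to bound $\norm{\bm b}{\Lp{q'}{\breve K}}\leq\norm{\det\mathbb{J}_{\widetilde K}}{\lp{\infty}{\breve K}}\norm{\bV_h\circ T_{\widetilde K}}{\Lp{q'}{\breve K}}$ directly, after which your accounting of the determinant factors goes through unchanged.
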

\begin{proof}
Let $\bm{\phi}\in \bm{W}^{m,\infty}(\breve{K})$ and
$\bV_h\in \mathbb{P}_{k+2(\mathfrak{K}-1)}(\breve K;\IC^{3})$. Then,
\begin{align*}
  \modulo{\mathcal{E}_{\breve{K}}(\bm{\phi}\cdot\bV_h)}&\leq
  C_{\mathcal{E}}\norm{\bm{\phi}\cdot \bV_h}{L^\infty(\breve{K})}\\
  &\leq
  C_{\mathcal{E}}\norm{\bm{\phi}}{\bm{L}^\infty(\breve{K})}
  \norm{\bV_h}{\bm{L}^\infty(\breve{K})}\\
  &\leq
  C_{\mathcal{E}}\norm{\bm{\phi}}{\bm{W}^{m,q}(\breve{K})}
  \norm{\bV_h}{\Lp{q'}{\breve{K}}},
\end{align*}
for some positive $C_{\mathcal{E}}$---depending only on
$\breve{K}$---and $\frac{1}{q}+\frac{1}{q'}=1$. Also, since
the error is zero for all $\bm\phi\in \mathbb{P}_{m-1}(\breve{K};\IC)$
we have, by the Bramble-Hilbert Lemma (Lemma \ref{lemma:BHvec})
\begin{align}
  \modulo{\mathcal{E}_{\breve{K}}(\bm{\phi}\cdot\bV_h)}&\leq
  C_{\breve{K}}\seminorm{\bm{\phi}}{\bm{W}^{m,q}(\breve{K})}
  \norm{\bV_h}{\Lp{q'}{\breve{K}}}\quad\forall\;\bm{\phi}\in\bm{W}^{m,q}(\breve{K}),\label{eq:errKbcur}
\end{align}
for some positive constant $C_{\breve{K}}$ depending only on $\breve{K}$ and $C_{\cE}$.
Notice that
\begin{align*}
\int_{\widetilde K}\bM\bU_h\cdot\bV_h\d\! \bx=\int_{\breve K}(\bM\circ {T}_{\widetilde K})(\bU_h\circ T_{\widetilde K})\cdot(\bV_h\circ T_{\widetilde K})\det{\mathbb{J}_{\widetilde K}}\d\! \bx,\\
Q_{\widetilde{K}}\left(\bM\bU_h \cdot\bV_h\right)=Q_{\breve{K}}\left( (\bM\circ {T}_{\widetilde K})(\bU_h\circ T_{\widetilde K})\cdot(\bV_h\circ T_{\widetilde K})\det{\mathbb{J}_{\widetilde K}}\right).
\end{align*}
Hence, we need only find a bound for 
\begin{align*}
\cE_{\breve K}\left( (\bM\circ {T}_{\widetilde K})(\bU_h\circ {T}_{\widetilde K})\cdot(\bV_h\circ {T}_{\widetilde K})\det{\mathbb{J}_{\widetilde K}}\right).
\end{align*}
From the last equation we see that we require
$(\bV_h\circ {T}_{\widetilde K})\det{\mathbb{J}_{\widetilde K}}$
to be a polynomial. Indeed,
\begin{align}
\bV_h\circ {T}_{\widetilde K}\det{\mathbb{J}_{\widetilde{K}}}=\det{\mathbb{J}^{\top}_{\widetilde{K}}}\mathbb{J}_{\widetilde{K}}^{-\top}\left(\mathbb{J}_{\widetilde{K}}^{\top}(\bV_h\circ{T}_{\widetilde K})\right)=\text{Co}(\mathbb{J}_{\widetilde{K}})\left(\mathbb{J}_{\widetilde{K}}^{\top}(\bV_h\circ{T}_{\widetilde K})\right)\in\mathbb{P}_{k+2(\mathfrak{K}-1)},\label{eq:VhcircTpoly}
\end{align}
where $\text{Co}(\mathbb{J}_{\widetilde{K}})$ is the
pointwise cofactor matrix of $\mathbb{J}_{\widetilde{K}}$.
Then, from \eqref{eq:errKbcur} and \eqref{eq:VhcircTpoly} we get
\begin{align}
&\cE_{\breve K}\left( (\bM\circ {T}_{\widetilde K})(\bU_h\circ {T}_{\widetilde K})\cdot(\bV_h\circ {T}_{\widetilde K})\det{\mathbb{J}_{\widetilde K}}\right)
\nonumber\\
&\leq C_{\breve{K}}\norm{\det{{\mathbb{J}}_{\widetilde K}}}{\lp{\infty}{\breve{K}}}\seminorm{(\bM\bU_h)\circ T_{\widetilde K}}{W^{m,q}({\breve K})}\norm{\bV_h\circ T_{\widetilde K}}{\Lp{q'}{\breve K}}.\label{eq:Curvbound}
\end{align}
We continue by bounding each term in \eqref{eq:Curvbound},
beginning with the $\Lp{q'}{\breve{K}}$ norm of $\bV_h\circ T_{\widetilde{K}}$, which
is easily bounded through a change of variables
\begin{align*}
\norm{\bV_h\circ T_{\widetilde K}}{\Lp{q'}{\breve K}}\leq\norm{\bV_h}{\Lp{q'}{\widetilde K}}\frac{1}{\inf_{\bx\in\breve K}\modulo{\det{\mathbb{J}_{\widetilde K}}}^{\frac{1}{q'}}}.
\end{align*}
To bound $\seminorm{(\bM\bU_h)\circ T_{\widetilde K}}
{W^{m,q}({\breve K})}$, we proceed as in Lemma \ref{lem:ConErrK}
and employ the bound in Lemma \ref{lem:curvineqK},
\begin{align*}
\seminorm{(\bM\bU_h)\circ T_{\widetilde K}}{W^{m,q}({\breve K})}
&\leq c \sum_{i,j=1}^{3}\sum_{n=0}^{m}\seminorm{M_{i,j}\circ T_{\widetilde K}}{W^{m-n,\infty}(\breve{K})}
\seminorm{\bU_{h,j}\circ T_{\breve K}}{W^{n,q}(\breve{K})}\\
&\leq c \sum_{i,j=1}^{3}\sum_{n=0}^{m}h^{m-n}\seminorm{M_{i,j}}{W^{m-n,\infty}(\widetilde{K})}
\frac{1}{\inf_{\bx\in\breve K}\modulo{\det{\mathbb{J}_{\widetilde K}}}^{\frac{1}{q}}}h^{n}\seminorm{\bU_{h,j}}{W^{n,q}(\widetilde{K})},
\end{align*}
where the positive constant $c$ is independent of $\widetilde K$
and the mesh-size and may change from line to line.
Then, combining our computed bounds, \eqref{eq:Curvbound} and
Assumption \ref{ass:k_gregular}
\begin{align*}
\modulo{\cE_{\widetilde{K}}(\bM\bU_h\cdot\bV_h)}\leq
C\theta C_{\bM}h^{m}\norm{\bU_h}{\bm{W}^{m,q}(\widetilde{K})}\norm{\bV_h}{\Lp{q'}{\widetilde{K}}},
\end{align*}
for some positive $C$, independent of $\widetilde{K}$ and the mesh-size.
\end{proof}
Notice that if $m=1$ in Lemma \ref{lem:ConErrtildeK}, we are unable
to extract $\bm L^2$-norms of $\bU_h$ and $\bV_h$, hence our requirement
that $m>1$ in Theorem \ref{thm:mainrescurv}. Also, observe the differences
between the previous proof and that of Lemma \ref{lem:ConErrK}. Not only 
do we require stronger conditions from our quadrature rules, we are also
unable to bound $\seminorm{\bU_h\circ T_{\widetilde K}}{\bW^{n,q}(\rK)}$
as before, owing to the fact that $\bU_h\circ T_{\widetilde K}$ fails to be a
polynomial (this is also the reason why we require to introduce $q$ and $q'$).
\begin{lemma}\label{lem:ConErrtildeKCurl}
Let $\widetilde{K}\in\widetilde\tau_h$, $q\geq 1$,
$q'=\frac{q}{q-1}$, $m\in\IN$
with $m>\frac{3}{q}$ and $\bM(\bx)\in\IC^{3\times 3}$,
with $\bM=(M_{i,j})_{i,j=1}^3$,
such that $M_{i,j}\in W^{m,\infty}(\widetilde{K})$ for all $i$, $j\in\{1,2,3\}$.
If $Q_{\breve{K}}$ is a quadrature rule as in Definition \ref{def:quad}
such that it is exact for polynomials of degree ${k+\mathfrak{K}+m-3}$,
then, for all $\bU_h$, $\bV_h\in\bm{P}^c_{\widetilde{K}}$, the quadrature error
\begin{equation*}
\cE_{\widetilde K}(\bM\curl\bU_h\cdot\curl\bV_h):=\int_{\widetilde K}\bM\curl\bU_h\cdot\curl\bV_h\d\! \bx
-Q_{\widetilde{K}}\left(\bM\curl\bU_h \cdot\curl\bV_h\right),
\end{equation*}
with $Q_{\widetilde K}$ as in \ref{eq:Qcurv}, is such that 
\begin{align*}
\modulo{\cE_{\widetilde{K}}(\bM\curl\bU_h\cdot\curl\bV_h)}\leq
C\theta C_{\bM}h^{m}\norm{\curl\bU_h}{\bm{W}^{m,q}(\widetilde{K})}\norm{\curl\bV_h}{\Lp{q'}{\widetilde{K}}},
\end{align*}
where $$C_{\bM}:=\sum_{i,j=1}^{3}\norm{M_{i,j}}{W^{m,\infty}(\widetilde{K})},$$
and $C$ is a positive constant independent of $h$, $K$ and $\bM$.
\end{lemma}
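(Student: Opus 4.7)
The plan is to mirror the proof of Lemma \ref{lem:ConErrtildeK}, exploiting the fact that for the curl integrand the polynomial factor extracted on the reference element carries only one power of $\mathfrak{K}$ rather than two, which is precisely what relaxes the exactness requirement from $k+2\mathfrak{K}+m-3$ down to $k+\mathfrak{K}+m-3$.

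First I would change variables to $\breve{K}$ and introduce $\breve{\bU}_h := \mathbb{J}_{\widetilde K}^{\top}(\bU_h\circ T_{\widetilde K})$ and $\breve{\bV}_h := \mathbb{J}_{\widetilde K}^{\top}(\bV_h\circ T_{\widetilde K})$, both of which lie in $\bm P^c_{\breve K}$ and are therefore polynomials of degree at most $k$. The commutativity identity $\curl(\mathbb{J}_{\widetilde K}^{\top}(\bV\circ T_{\widetilde K})) = \det\mathbb{J}_{\widetilde K}\,\mathbb{J}_{\widetilde K}^{-1}(\curl\bV\circ T_{\widetilde K})$ then yields
\begin{align*}
(\curl\bV_h\circ T_{\widetilde K})\det\mathbb{J}_{\widetilde K} = \mathbb{J}_{\widetilde K}\curl\breve{\bV}_h,
\end{align*}
where $\curl\breve{\bV}_h\in\bm P^d_{\breve K}$ is a polynomial of degree at most $k-1$. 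Since the entries of $\mathbb{J}_{\widetilde K}$ are polynomials of degree $\mathfrak{K}-1$, the right-hand side is polynomial of degree at most $k+\mathfrak{K}-2$. Using $\det\mathbb{J}_{\widetilde K}>0$ from Assumption \ref{ass:k_gregular}, the integrand transported to $\breve K$ factors as $\bm\phi\cdot\bm P$, with
\begin{align*}
\bm\phi := (\bM\circ T_{\widetilde K})(\curl\bU_h\circ T_{\widetilde K}), \qquad \bm P := \mathbb{J}_{\widetilde K}\curl\breve{\bV}_h.
\end{align*}

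Next, following the scheme of Lemma \ref{lem:ConErrtildeK}, I would fix $\bm P$ and regard $\bm\phi\mapsto \mathcal{E}_{\breve K}(\bm\phi\cdot\bm P)$ as a linear functional on $\bm W^{m,q}(\breve K)$. Since $mq>3$, the Sobolev embedding $\bm W^{m,q}(\breve K)\hookrightarrow\bm L^{\infty}(\breve K)$ combined with norm equivalence on the finite-dimensional polynomial space of degree $\leq k+\mathfrak{K}-2$ bounds this functional by $\|\bm\phi\|_{\bm W^{m,q}(\breve K)}\|\bm P\|_{\bm L^{q'}(\breve K)}$ times $C_{\mathcal E}$. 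Crucially, when $\bm\phi$ is itself polynomial of degree $\leq m-1$, the product $\bm\phi\cdot\bm P$ has degree at most $(m-1)+(k+\mathfrak{K}-2) = k+\mathfrak{K}+m-3$, exactly matching the assumed exactness of $Q_{\breve K}$, so the functional vanishes. Lemma \ref{lemma:BHvec} then gives
\begin{align*}
|\mathcal{E}_{\breve K}(\bm\phi\cdot\bm P)|\leq C\,|\bm\phi|_{\bm W^{m,q}(\breve K)}\,\|\bm P\|_{\bm L^{q'}(\breve K)}.
\end{align*}

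Finally, I would transport both factors back to $\widetilde K$. For the seminorm of $\bm\phi$, I would expand via a Leibniz rule as in \eqref{eq:phiequiv}--\eqref{eq:auxs2}, using the derivative bounds on $T_{\widetilde K}$ from Assumption \ref{ass:k_gregular} to handle the chain-rule factors for $\bM\circ T_{\widetilde K}$, and invoking a vector-valued analogue of Lemma \ref{lem:curvineqK} to push $\curl\bU_h\circ T_{\widetilde K}$ back onto $\widetilde K$; this yields $|\bm\phi|_{\bm W^{m,q}(\breve K)}\leq C\,h^{m}\bigl(\inf_{\breve K}\det\mathbb{J}_{\widetilde K}\bigr)^{-1/q} C_{\bM}\|\curl\bU_h\|_{\bm W^{m,q}(\widetilde K)}$. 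For the $\bm L^{q'}$ term, rewriting $\bm P = (\curl\bV_h\circ T_{\widetilde K})\det\mathbb{J}_{\widetilde K}$ and a direct change of variables produces $\|\bm P\|_{\bm L^{q'}(\breve K)}\leq \bigl(\sup_{\breve K}\det\mathbb{J}_{\widetilde K}\bigr)^{1/q}\|\curl\bV_h\|_{\bm L^{q'}(\widetilde K)}$. The determinantal factors then collapse, via Assumption \ref{ass:k_gregular}, to the uniform constant $\theta^{1/q}$, delivering the advertised estimate. The main obstacle will be the degree accounting: one must resist the symmetric-looking temptation to also absorb $\curl\bU_h\circ T_{\widetilde K}$ into a polynomial via its Piola pullback, since that would either force a $1/\det\mathbb{J}_{\widetilde K}$ into $\bm P$ or inflate the polynomial degree by another $\mathfrak{K}-1$, in either case breaking the sharp $k+\mathfrak{K}+m-3$ threshold; leaving $\curl\bU_h\circ T_{\widetilde K}$ inside the smooth factor $\bm\phi$ is exactly what achieves it.
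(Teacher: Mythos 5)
Your proposal is correct and follows essentially the same route as the paper: use the identity $\curl\bigl(\mathbb{J}_{\widetilde K}^{\top}(\bV_h\circ T_{\widetilde K})\bigr)=\det\mathbb{J}_{\widetilde K}\,\mathbb{J}_{\widetilde K}^{-1}(\curl\bV_h)\circ T_{\widetilde K}$ to see that $(\curl\bV_h\circ T_{\widetilde K})\det\mathbb{J}_{\widetilde K}=\mathbb{J}_{\widetilde K}\curl\breve{\bV}_h$ is a polynomial of degree $k+\mathfrak{K}-2$, and then repeat the Bramble--Hilbert argument of Lemma \ref{lem:ConErrtildeK} with the smooth factor $(\bM\circ T_{\widetilde K})(\curl\bU_h\circ T_{\widetilde K})$, transporting back via Lemma \ref{lem:curvineqK} and the $\theta$-bound of Assumption \ref{ass:k_gregular}. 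Your slightly different bookkeeping of the determinant (keeping $\det\mathbb{J}_{\widetilde K}$ inside the $\Lp{q'}{\breve K}$ norm rather than extracting $\norm{\det\mathbb{J}_{\widetilde K}}{\lp{\infty}{\breve K}}$) is an inessential variation and the degree count $(m-1)+(k+\mathfrak{K}-2)=k+\mathfrak{K}+m-3$ matches the assumed exactness.
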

\begin{proof}
We begin by noticing that for $\bV\in\hcurl{\widetilde{K}}$
\cite[Lem.~2.2]{jerez2017electromagnetic}
\begin{align*}
\curl \left(\mathbb{J}_{\widetilde{K}}^{\top}(\bV\circ T_{\widetilde{K}})\right)=
\det{\mathbb{J}_{\widetilde{K}}}\mathbb{J}_{\widetilde{K}}^{-1}(\curl\bV)\circ T_{\widetilde{K}},
\end{align*}
hence
\begin{align*}
\det{\mathbb{J}_{\widetilde{K}}}(\curl\bV)\circ T_{\widetilde{K}}=\mathbb{J}_{\widetilde{K}}\curl \left(\mathbb{J}_{\widetilde{K}}^{\top}(\bV\circ T_{\widetilde{K}})\right)\in \mathbb{P}_{k+\mathfrak{K}-2}(\breve{K};\IC).
\end{align*}
The proof proceeds as that for Lemma \ref{lem:ConErrtildeK}.
\end{proof}

\begin{lemma}\label{lem:ConErrKrhscurv}
Let $\widetilde K\in\widetilde \tau_h$, $m\in\IN$ and $q\in\IR$ such that
\begin{align}
q\geq 2\quad\mbox{and}\quad q>\frac{3}{m},\label{eq:q1condcurv}
\end{align}
and $Q_{\breve{K}}$ be a quadrature rule as in Definition \ref{def:quad}
such that it is exact on polynomials of degree ${k+2\mathfrak{K}+m-3}$.
Then, if $\bJ\in\bW^{m,q}(\widetilde K)$,
the local quadrature error $\cE_{\widetilde K}(\bJ\cdot\bV_h)$
(as defined in Lemma \ref{lem:ConErrtildeK}) is such that for all
$\bV_h\in\bm {P}^c_{\widetilde K}$
\begin{align*}
\modulo{\cE_{\widetilde K}(\bJ\cdot\bV_h)}\leq
Ch^{m}\modulo{\widetilde K}^{\frac{1}{2}-\frac{1}{q}}\norm{\bJ}{\bW^{m,q}(\widetilde K)}\norm{\bV_h}{0,\widetilde K},
\end{align*}
for a positive constant $C$ independent of $\widetilde K$, $\bJ$ and the mesh-size.
\end{lemma}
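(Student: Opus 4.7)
The plan is to mirror the argument of Lemma \ref{lem:ConErrtildeK}, substituting $\bJ$ for the product $\bM\bU_h$ and compensating for the loss of polynomial structure in that slot by a Sobolev embedding on the reference tetrahedron. This is precisely the modification that converts Lemma \ref{lem:ConErrK} into Lemma \ref{lem:ConErrKrhs} in the polyhedral setting, now transposed onto curved elements.

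First I would pull the error back to $\breve{K}$ via $T_{\widetilde K}$, so that
\[
\cE_{\widetilde K}(\bJ\cdot\bV_h)=\cE_{\breve K}\!\left((\bJ\circ T_{\widetilde K})\cdot(\bV_h\circ T_{\widetilde K})\det\mathbb{J}_{\widetilde K}\right).
\]
By \eqref{eq:VhcircTpoly}, $(\bV_h\circ T_{\widetilde K})\det\mathbb{J}_{\widetilde K}\in\mathbb{P}_{k+2(\mathfrak{K}-1)}(\breve K;\IC^3)$. Consequently, for any $\bm p\in\mathbb{P}_{m-1}(\breve K;\IC^3)$ the integrand $\bm p\cdot(\bV_h\circ T_{\widetilde K})\det\mathbb{J}_{\widetilde K}$ has degree at most $k+2\mathfrak{K}+m-3$ and is integrated exactly by $Q_{\breve K}$. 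Hence the linear functional $\bm\phi\mapsto \cE_{\breve K}(\bm\phi\cdot(\bV_h\circ T_{\widetilde K})\det\mathbb{J}_{\widetilde K})$ vanishes on $\mathbb{P}_{m-1}(\breve K;\IC^3)$.

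Next I would verify continuity of this functional on $\bm W^{m,q}(\breve K)$. Because $mq>3$, the Sobolev embedding $\bm W^{m,q}(\breve K)\hookrightarrow \bm L^\infty(\breve K)$ together with H\"older on $\breve K$ (exponents $q$, $q'=q/(q-1)$) yields
\[
|\cE_{\breve K}(\bm\phi\cdot(\bV_h\circ T_{\widetilde K})\det\mathbb{J}_{\widetilde K})|\leq c\,\norm{\bm\phi}{\bm W^{m,q}(\breve K)}\norm{(\bV_h\circ T_{\widetilde K})\det\mathbb{J}_{\widetilde K}}{\Lp{q'}{\breve K}}.
\]
The Bramble-Hilbert Lemma \ref{lemma:BHvec} then upgrades the full norm to the $\bm W^{m,q}(\breve K)$-seminorm. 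Specialising to $\bm\phi=\bJ\circ T_{\widetilde K}$, Lemma \ref{lem:curvineqK} yields $\seminorm{\bJ\circ T_{\widetilde K}}{\bm W^{m,q}(\breve K)}\leq C\inf_{\breve K}|\det\mathbb{J}_{\widetilde K}|^{-1/q}h^m\norm{\bJ}{\bm W^{m,q}(\widetilde K)}$, while a direct change of variables bounds $\norm{\bV_h\circ T_{\widetilde K}}{\Lp{q'}{\breve K}}$ by $(\inf_{\breve K}\det\mathbb{J}_{\widetilde K})^{-1/q'}\norm{\bV_h}{\Lp{q'}{\widetilde K}}$. Using Assumption \ref{ass:k_gregular} and Lemma \ref{lem:detJmodK}, the equivalence $\det\mathbb{J}_{\widetilde K}\sim|\widetilde K|/|\breve K|$ collapses all accumulated Jacobian factors into a constant independent of $\widetilde K$ and $h$, leaving
\[
|\cE_{\widetilde K}(\bJ\cdot\bV_h)|\leq C\,h^m\norm{\bJ}{\bm W^{m,q}(\widetilde K)}\norm{\bV_h}{\Lp{q'}{\widetilde K}}.
\]

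The final step is a single application of H\"older on $\widetilde K$: since $q\geq 2$ gives $q'\leq 2$, one has $\norm{\bV_h}{\Lp{q'}{\widetilde K}}\leq |\widetilde K|^{1/q'-1/2}\norm{\bV_h}{0,\widetilde K}=|\widetilde K|^{1/2-1/q}\norm{\bV_h}{0,\widetilde K}$, which produces exactly the claimed volume factor. I expect the main technical nuisance to be bookkeeping the various powers of $\det\mathbb{J}_{\widetilde K}$ introduced by the pullback, by Lemma \ref{lem:curvineqK}, and by the $L^{q'}$ change of variables; their collapse into a mesh-independent constant hinges on the uniform two-sided bound on ratios of $\det\mathbb{J}_{\widetilde K}$ given by Assumption \ref{ass:k_gregular} combined with Lemma \ref{lem:detJmodK}.
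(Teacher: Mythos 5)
Your proposal is correct and follows essentially the same path as the paper: pull the error back to $\breve K$, use that $(\bV_h\circ T_{\widetilde K})\det\mathbb{J}_{\widetilde K}$ is a polynomial of degree $k+2(\mathfrak{K}-1)$ so the functional vanishes on $\mathbb{P}_{m-1}(\breve K;\IC^3)$, apply Bramble--Hilbert, bound $\seminorm{\bJ\circ T_{\widetilde K}}{\bm W^{m,q}(\breve K)}$ via Lemma \ref{lem:curvineqK}, and collapse the Jacobian factors with the $\theta$-uniformity of Assumption \ref{ass:k_gregular}. The only (harmless) difference is where the volume factor appears: the paper measures the polynomial slot in $L^2(\breve K)$ and converts the leftover power $\det\mathbb{J}_{\widetilde K}^{\,1/2-1/q}$ into $\modulo{\widetilde K}^{1/2-1/q}$ via Lemma \ref{lem:detJmodK}, whereas you keep the $\Lp{q'}{\breve K}$ norm (so the determinant powers cancel exactly) and recover the same factor by a final H\"older inequality on $\widetilde K$.
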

\begin{proof}
We pick up from an analogous expression as that in
\eqref{eq:errKbcur}, noticing that by similar arguments we get
\begin{align*}
  \modulo{\mathcal{E}_{\breve{K}}(\bm{\phi}\cdot\bV_h)}&\leq
  C_{\breve{K}}\seminorm{\bm{\phi}}{\bm{W}^{m,q}(\breve{K})}
  \norm{\bV_h}{0,{\breve{K}}}\quad\forall\;\bm{\phi}\in\bm{W}^{m,q}(\breve{K}),\;\bV_h\in\bm P^c_{\widetilde K}.
\end{align*}
Then, from our assumptions and Lemmas \ref{lem:detJmodK} and \ref{lem:curvineqK}
\begin{align*}
\cE_{\widetilde K}\left(\bJ\cdot\bV_h\right)&=\cE_{\rK}\left(\bJ\circ T_{\widetilde K}\cdot\bV_h\circ T_{\widetilde K}\det\mathbb{J}_{\widetilde K}\right)
\nonumber\\
&\leq C_{\breve{K}}\norm{\det{{\mathbb{J}}_{\widetilde K}}}{\lp{\infty}{\breve{K}}}\seminorm{\bJ\circ T_{\widetilde K}}{W^{m,q}({\breve K})}\norm{\bV_h\circ T_{\widetilde K}}{0,{\breve K}}\\
&\leq c\norm{\det{{\mathbb{J}}_{\widetilde K}}}{\lp{\infty}{\breve{K}}}h^m\frac{1}{\inf_{\bx\in\breve K}\modulo{\det{\mathbb{J}_{\widetilde K}}}^{\frac{1}{q}}}\norm{\bJ}{W^{m,q}({\widetilde K})}\frac{1}{\inf_{\bx\in\breve K}\modulo{\det{\mathbb{J}_{\widetilde K}}}^{\frac{1}{2}}}\norm{\bV_h}{0,{\widetilde K}}\\
&\leq c\;\theta{\modulo{\widetilde K}^{\frac{1}{2}-\frac{1}{q}}}{\modulo{\rK}^{\frac{1}{q}-\frac{1}{2}}}\;h^m\norm{\bJ}{W^{m,q}({\widetilde K})}\norm{\bV_h}{0,{\widetilde K}},
\end{align*}
for a positive constant $c$ independent of the mesh-size and $K$,
which may change from line to line.
\end{proof}

As before, the computed estimates will enable us to
prove, based in our assumptions, consistency error estimates
for the respective sesquilinear and antilinear forms considered
in this section. The proofs of the following two Theorems
(yielding said consistency estimates) are analogous to the proofs
of Theorems \ref{thm:consesq} and \ref{thm:conant} and
are thus omitted.

\begin{theorem}[Consistency error for the sesquilinear forms
$\{\widetilde\Phi_{h_i}\}_{i\in\IN}$]
\label{thm:consesqcurv}
Recall $k\in\IN$ as the polynomial degree of our approximation
spaces. Let $m\in\IN$ and assume the following of the quadrature
rules defining the sesquilinear forms in
$\{\widetilde\Phi_{h_i}\}_{i\in\IN}$ in Definition \ref{def:numsesqantcurv}:
\begin{itemize}
\item The quadrature rule $Q^1_{\rK}$ is exact for polynomials
of degree $k+\mathfrak{K}+m-3$.
\item The quadrature rule $Q^2_{\rK}$ is exact for polynomials
of degree $k+2\mathfrak{K}+m-3$.
\end{itemize}
Then, under Assumptions \ref{ass:polydom} and \ref{ass:meshPoly} and if the
coefficients of $\mu^{-1}$ and $\epsilon$ belong to
$W^{m,\infty}(\D^H)$ and for $\Phi_{h_i}$ as in \eqref{eq:Phih},
\begin{align*}
&\modulo{\Phi_{h_i}(\bU_{h_i},\bV_{h_i})-\widetilde{\Phi}_{h_i}(\bU_{h_i},\bV_{h_i})}\\
&\leq C_{\Phi}h_i^{m} \sum_{\widetilde K\in\tau_{h_i}}C_{\mu^{-1}}\norm{\curl\bU_{h_i}}{\bm{W}^{m,q}(\widetilde{K})}\norm{\curl\bV_{h_i}}{\Lp{q'}{\widetilde{K}}}
+\omega^2C_{\epsilon}\norm{\bU_{h_i}}{\bm{W}^{m,q}(\widetilde{K})}\norm{\bV_{h_i}}{\Lp{q'}{\widetilde{K}}}
\end{align*}
for all $\bU_{h_i}$, $\bV_{h_i}\in\bm{P}_0^c(\tau_{h_i})$, where $q$ and $q'\in\IR$
are such that $q>1$, $q'=\frac{q}{q-1}$ and $q>\frac{3}{m}$, $C_{\mu^{-1}}$ and
$C_{\epsilon}$ are positive constants depending on $\mu^{-1}$ and
$\epsilon$, and $C_{\Phi}$ is a positive constant independent of the mesh sizes $\{h_i\}_{i\in\IN}$.
\end{theorem}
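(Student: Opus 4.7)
The plan is to mimic the structure of the proof of Theorem \ref{thm:consesq}, splitting the global consistency error into local contributions over curved elements and then invoking the appropriate per-element estimates. Concretely, for any $\bU_{h_i}, \bV_{h_i}\in\bm P^c_0(\widetilde\tau_{h_i})$ I would write
\begin{align*}
\Phi_{h_i}(\bU_{h_i},\bV_{h_i})-\widetilde{\Phi}_{h_i}(\bU_{h_i},\bV_{h_i})
=\sum_{\widetilde K\in\widetilde\tau_{h_i}}\Big(\cE_{\widetilde K}(\mu^{-1}\curl\bU_{h_i}\cdot\curl\overline{\bV_{h_i}})-\omega^2\cE_{\widetilde K}(\epsilon\,\bU_{h_i}\cdot\overline{\bV_{h_i}})\Big),
\end{align*}
using the definitions \eqref{eq:Phih} and of $\widetilde{\Phi}_{h_i}$ from Definition \ref{def:numsesqantcurv}, and then apply the triangle inequality elementwise.

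Next I would handle the two element-local errors separately. For the curl--curl contribution, since $\bU_{h_i}\vert_{\widetilde K},\bV_{h_i}\vert_{\widetilde K}\in\bm P^c_{\widetilde K}$, I would invoke Lemma \ref{lem:ConErrtildeKCurl} with matrix $\bM=\mu^{-1}$; this is precisely the statement that, when $Q^1_{\rK}$ is exact on polynomials of degree $k+\mathfrak{K}+m-3$, one gets
\begin{align*}
\modulo{\cE_{\widetilde K}(\mu^{-1}\curl\bU_{h_i}\cdot\curl\bV_{h_i})}\leq C\,C_{\mu^{-1}}\,h_i^{m}\,\norm{\curl\bU_{h_i}}{\bm W^{m,q}(\widetilde K)}\norm{\curl\bV_{h_i}}{\bm L^{q'}(\widetilde K)},
\end{align*}
with $C_{\mu^{-1}}=\sum_{i,j}\norm{(\mu^{-1})_{i,j}}{W^{m,\infty}(\widetilde K)}$, which is bounded by $\sum_{i,j}\norm{(\mu^{-1})_{i,j}}{W^{m,\infty}(\D^H)}$ by inclusion. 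For the mass contribution, I would apply Lemma \ref{lem:ConErrtildeK} with $\bM=\epsilon$, which requires the stricter degree $k+2\mathfrak{K}+m-3$ for $Q^2_{\rK}$ and yields the analogous bound with $C_\epsilon$ and the full fields $\bU_{h_i},\bV_{h_i}$ (rather than their curls).

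Summing the two local estimates over all $\widetilde K\in\widetilde\tau_{h_i}$ and pulling out the common factor $h_i^m$ gives exactly the claimed inequality, with $C_{\Phi}$ absorbing the constants from Lemmas \ref{lem:ConErrtildeK} and \ref{lem:ConErrtildeKCurl} (in particular the uniform geometric constant $\theta$ from Assumption \ref{ass:k_gregular}). The hypotheses $q>1$, $q'=q/(q-1)$ and $q>3/m$ are inherited directly from those lemmas and are exactly what is required for the Sobolev embedding $\bm W^{m,q}(\rK)\hookrightarrow\bm L^\infty(\rK)$ to drive the Bramble--Hilbert argument on the reference element.

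The only real subtlety, already absorbed into the cited lemmas, is that on curved elements $\bU_{h_i}\circ T_{\widetilde K}$ is not polynomial; the degrees $k+\mathfrak{K}+m-3$ and $k+2\mathfrak{K}+m-3$ arise precisely so that $(\bV_{h_i}\circ T_{\widetilde K})\det\mathbb{J}_{\widetilde K}$ and $(\curl\bV_{h_i})\circ T_{\widetilde K}\det\mathbb{J}_{\widetilde K}$ become polynomials of degrees $k+2(\mathfrak{K}-1)$ and $k+\mathfrak{K}-2$ respectively, as exploited in the proofs of Lemmas \ref{lem:ConErrtildeK} and \ref{lem:ConErrtildeKCurl}. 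Hence, once those lemmas are in hand, the only work for Theorem \ref{thm:consesqcurv} is the bookkeeping of elementwise summation; no genuine obstacle remains, and the proof is essentially a transcription of that of Theorem \ref{thm:consesq} with the curved-element lemmas replacing their affine counterparts.
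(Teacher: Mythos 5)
Your proposal is correct and follows exactly the route the paper intends: the paper omits this proof as ``analogous to Theorem \ref{thm:consesq}'', and your elementwise decomposition combined with Lemma \ref{lem:ConErrtildeKCurl} (for the curl--curl term, degree $k+\mathfrak{K}+m-3$) and Lemma \ref{lem:ConErrtildeK} (for the mass term, degree $k+2\mathfrak{K}+m-3$), followed by summation and absorption of the constants (including $\theta$) into $C_\Phi$, is precisely that analogy made explicit.
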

  \begin{theorem}[Consistency error for the antilinear forms
  $\{\widetilde{\bm F}_{h_i}\}_{i\in\IN}$]
\label{thm:conantcurv}
Recall $k\in\IN$ as the polynomial degree of our approximation
spaces. Let $m\in\IN$ and assume the quadrature rule $Q^3_{\breve{K}}$
from Definition \ref{def:numsesqant} is exact for polynomials of degree
${k+2\mathfrak{K}+m-3}$. Then, under Assumptions
\ref{ass:polydom} and \ref{ass:meshPoly} and if $\bJ$ is such that
$\bJ\in \bW^{m,q}(D)$ for $q > \frac{3}{m}$ and $q\geq 2$, then
\begin{align*}
\modulo{\bF_{h_i}(\bV_{h_i})-\widetilde{\bF}_{h_i}(\bV_{h_i})}\leq &
C_{\bF}\omega {h_i}^{m}\modulo{\D_{h_i}}^{\frac{1}{2}-\frac{1}{q}}
\norm{\bJ}{\bW^{m,q}(\D_{h_i})}\norm{\bV_{h_i}}{0,{\D_{h_i}}}
\end{align*}
for all $\bV_{h_i}\in\bm{P}_0^c(\tau_{h_i})$, for a positive constant $C_{\bF}$ independent of the mesh-size.
\end{theorem}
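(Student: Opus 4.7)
The plan is to reduce the global consistency error to a sum of local quadrature errors, apply Lemma \ref{lem:ConErrKrhscurv} element-wise, and then recover the global norms by a single application of Hölder's inequality with three exponents. I will not need any ingredient beyond the local estimate on curved elements together with the exactness hypothesis on $Q^3_{\rK}$ already assumed.

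First, I would unpack the definitions of $\bF_{h_i}$ in \eqref{eq:rhsh} and $\widetilde\bF_{h_i}$ in Definition \ref{def:numsesqantcurv}. Since the mesh $\widetilde\tau_{h_i}$ covers $\D_{h_i}$ exactly, I may write
\begin{equation*}
\bF_{h_i}(\bV_{h_i}) - \widetilde{\bF}_{h_i}(\bV_{h_i})
= -\imath\omega\sum_{\widetilde K\in\widetilde\tau_{h_i}}\cE_{\widetilde K}(\bJ\cdot\overline{\bV_{h_i}}),
\end{equation*}
where each $\cE_{\widetilde K}$ is the local quadrature error of Lemma \ref{lem:ConErrKrhscurv}. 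The assumption that $Q^3_{\rK}$ is exact for polynomials of degree $k+2\mathfrak{K}+m-3$ together with $\bJ\in\bW^{m,q}(\D^H)\supset\bW^{m,q}(\widetilde K)$ and the conditions $q\geq 2$, $q>3/m$ are precisely those required by Lemma \ref{lem:ConErrKrhscurv}. Applying it on each $\widetilde K$ and using the triangle inequality yields
\begin{equation*}
\modulo{\bF_{h_i}(\bV_{h_i})-\widetilde\bF_{h_i}(\bV_{h_i})}
\leq C\omega\, h_i^{m}\sum_{\widetilde K\in\widetilde\tau_{h_i}}
\modulo{\widetilde K}^{\frac{1}{2}-\frac{1}{q}}
\norm{\bJ}{\bW^{m,q}(\widetilde K)}\norm{\bV_{h_i}}{0,\widetilde K}.
\end{equation*}

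The key remaining step, which I view as the main (though modest) technical obstacle, is choosing the right Hölder exponents to reassemble the elementwise norms into the global norms $\norm{\bJ}{\bW^{m,q}(\D_{h_i})}$ and $\norm{\bV_{h_i}}{0,\D_{h_i}}$ while producing the factor $\modulo{\D_{h_i}}^{1/2-1/q}$ announced in the statement. I would use the three-factor Hölder inequality with exponents $p_1=(1/2-1/q)^{-1}$, $p_2=q$, $p_3=2$, which satisfy $1/p_1+1/p_2+1/p_3 = (1/2-1/q)+1/q+1/2 = 1$ (note that $p_1>0$ thanks to $q>2$). This gives
\begin{equation*}
\sum_{\widetilde K}\modulo{\widetilde K}^{\frac{1}{2}-\frac{1}{q}}\norm{\bJ}{\bW^{m,q}(\widetilde K)}\norm{\bV_{h_i}}{0,\widetilde K}
\leq \Bigl(\sum_{\widetilde K}\modulo{\widetilde K}\Bigr)^{\frac{1}{2}-\frac{1}{q}}
\Bigl(\sum_{\widetilde K}\norm{\bJ}{\bW^{m,q}(\widetilde K)}^{q}\Bigr)^{\frac{1}{q}}
\Bigl(\sum_{\widetilde K}\norm{\bV_{h_i}}{0,\widetilde K}^{2}\Bigr)^{\frac{1}{2}}.
\end{equation*}

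Finally, the first sum equals $\modulo{\D_{h_i}}$, the second equals $\norm{\bJ}{\bW^{m,q}(\D_{h_i})}^{q}$ (this is an equality because Sobolev norms based on $L^{q}$ are additive over a partition), and the third equals $\norm{\bV_{h_i}}{0,\D_{h_i}}^{2}$. Substituting back yields exactly the bound claimed in Theorem \ref{thm:conantcurv}, with the positive constant $C_{\bF}$ independent of $h_i$ since the constant in Lemma \ref{lem:ConErrKrhscurv} is, by Assumption \ref{ass:k_gregular}, independent of $\widetilde K$ and the mesh-size. The entire argument is essentially a transplant of the proof of Theorem \ref{thm:conant} to the curved setting, with the single substantive change being the replacement of Lemma \ref{lem:ConErrKrhs} by its curved analogue Lemma \ref{lem:ConErrKrhscurv}.
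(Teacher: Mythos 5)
Your proposal is correct and takes essentially the route the paper intends: the paper omits this proof, declaring it analogous to Theorem \ref{thm:conant}, i.e.\ split the error into the local quadrature errors of Lemma \ref{lem:ConErrKrhscurv} on each curved element and reassemble the global norms via H\"older's inequality, which is exactly your argument. The only cosmetic remark is that for $q=2$ the exponent $\frac{1}{2}-\frac{1}{q}$ vanishes, so your three-factor H\"older step degenerates to the ordinary Cauchy--Schwarz case, which is harmless.
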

\begin{proof}[Proof of Theorem \ref{thm:mainrescurv}]
Our main result follows by combining Theorems
\ref{thm:consesqcurv} and \ref{thm:conantcurv} with
Lemma \ref{lemma:StrangC}.
\end{proof}

\section{Numerical Examples}
\label{sec:Num_Ex}
We test our main results on a number of simple numerical examples. In order to isolate the effects that quadrature rules have on the observed rates of convergence in Theorems \ref{thm:mainresaffine} and \ref{thm:mainrescurv}, we focus only on the case of a polygonal domain, namely the cube $\D:=[-1,1]^3\subset\IR^3$, and consider a coercive
problem with the following sesquilinear and antilinear forms:

\begin{align}
\label{eq:Phi_ex_1}
\Phi(\bU,\bV)&:=
\int_{\D} {(\mu_0\mathsf{I})}^{-1} \curl \mathbf{U} \cdot \curl \overline{\mathbf{V}}-
\omega^2(\epsilon_0\mathsf{I}) \mathbf{U} \cdot \overline{\mathbf{V}} \d\!\bx,\\
\label{eq:rhs_ex_1}
\bF(\bV)&:=-\imath\omega\int_{\D}\bJ\cdot\overline{\bV}\d\!\bx,
\end{align}
where $\mathsf{I}\in\IR^{3\times3}$ is the identity matrix, $\omega=1$, $\mu_0$ and $\epsilon_0$ are strictly positive and negative scalars, respetively, $\bJ$ is given by
\begin{align}
\bJ(\bx)=\frac{\imath}{\omega}\mu_0^{-1}(4-2x_2^2-x_3^2)[1,0,0]^\top-\omega\epsilon_0(x_2^2-1)(x_3^2-1)[1,0,0]^\top\in\bW^{1,q}(\D)\quad\forall q>2,
\label{eq:J_ex_1}
\end{align}
and the solution $\bE$ to Problem \ref{prob:varprob} is given by
\begin{align}
\bE(\bx)=(x_2^2-1)(x_3^2-1)[1,0,0]^{\top}\in\hocurl{\D}\cap\hscurl{\D}{2}.\label{eq:analyticSol}
\end{align}

Experiments were carried out using GETDP \cite{getdp} (version 3.3.0) and modifications to its source code corresponding to unimplemented
quadrature rules. The points and weights of the quadrature rules available in GETDP may be examined in the file \texttt{Gauss\_Tetrahedron.h} of the source code. We also used GMSH \cite{geuzaine2009gmsh} to generate the required meshes of the domain $\D$.
\subsection{Convergence estimate for first order finite elements ($k=1$)}
\label{ssec:num_1}
We consider first order curl-conforming finite elements and set
the parameters in \eqref{eq:Phi_ex_1}, \eqref{eq:rhs_ex_1} and \eqref{eq:J_ex_1} to $\mu_0=10$ and $\epsilon_0=-10$.
We construct two numerical variations for both the sesquilinear and antilinear forms, given by
\begin{align*}
\widetilde{\Phi}_{h,0}({\bU_h},{\bV_h})&:=\sum_{K\in\tau_h}
Q_{K}^{1}({(\mu_0\mathsf{I})}^{-1}\curl\bU_h\cdot\curl\overline{\bV_h})
+Q_{K}^{1}(-\omega^2{(\epsilon_0\mathsf{I})}\bU_h\cdot\overline{\bV_h}),\\
\widetilde{\Phi}_{h,1}({\bU_h},{\bV_h})&:=\sum_{K\in\tau_h}
Q_{K}^{1}({(\mu_0\mathsf{I})}^{-1}\curl\bU_h\cdot\curl\overline{\bV_h})
+Q_{K}^{2}(-\omega^2{(\epsilon_0\mathsf{I})}\bU_h\cdot\overline{\bV_h}),\\
\widetilde{\bF}_{h,0}({\bV_h})&:=\sum_{K\in\tau_h}Q_{K}^3(-\imath\omega\bJ\cdot\overline{\bV_h}),\\
\widetilde{\bF}_{h,1}({\bV_h})&:=\sum_{K\in\tau_h}Q_{K}^2(-\imath\omega\bJ\cdot\overline{\bV_h}),
\end{align*}
where $Q_{\breve K}^{1}$ and $Q_{\breve K}^{3}$ are one-point quadrature rules with arbitrarily different chosen quadrature points in $\breve K$ --exact on polynomials of degree zero-- and $Q_{\breve K}^{2}$ is a one-point Gaussian quadrature rule --exact on polynomials of degree one--. Quadratures $Q^1_K$, $Q^2_K$ and $Q^3_K$, for $K\in\tau_h$, are then built from $Q^1_{\breve K}$, $Q^2_{\breve K}$ and $Q^3_{\breve K}$ as
indicated in \eqref{eq:QK}. Hence, $\widetilde{\Phi}_{h,1}$
and $\widetilde{\bF}_{h,1}$ satisfy the requirements of Theorem \ref{thm:mainresaffine} (with $r=1$) while $\widetilde{\Phi}_{h,0}$
and $\widetilde{\bF}_{h,0}$ do not. Figure \ref{fig:error_test} displays the convergence in the $\hcurl{\D}$-norm of the solution to Problem \ref{prob:varprobnum} corresponding to the different numerical implementations of the sesquilinear and antilinear forms. Nine meshes with 168, 228, 1,242, 2,810, 9,188, 38,782, 119,134, 500,300 and 1,265,246 degrees of freedom were employed.

\begin{figure}[t]
\center
\includegraphics[scale=0.6]{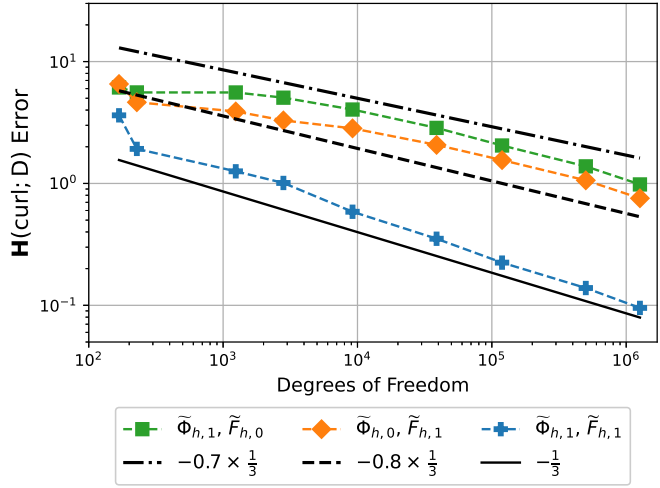}
\caption{Error convergence in the $\hcurl{\D}$--norm for the solutions of
Problem \ref{prob:varprobnum} to $\bE$ --given in \eqref{eq:analyticSol}-- depending on the implemented numerical variations of the sesquilinear and antilinear forms, indicated by the legend. The implementation considering $\widetilde\Phi_{h,\text{e}}$ and $\widetilde F_{h,\text{e}}$ attains a rate of convergence of $-\frac{1}{3}$ with respect to the number of degrees of freedom, as predicted by Theorem \ref{thm:mainresaffine}. On the other hand, implementations with at least one term not satisfying the conditions of Theorem \ref{thm:mainresaffine} suffer from a degenerated rate between $-0.7\times\frac{1}{3}$ and $-0.8\times\frac{1}{3}$, but convergence is still observed. }
\label{fig:error_test}
\end{figure}

\subsection{Convergence estimate for second order finite elements ($k=2$)}
\label{ssec:num_2}
We extend our previous experiment to second order curl-conforming finite elements. Again, we consider $\mu_0=10$ and $\epsilon_0=-10$, and construct three numerical variations for the sesquilinear form:
\begin{align*}
\widetilde{\Phi}_{h,0}({\bU_h},{\bV_h})&:=\sum_{K\in\tau_h}
Q_{K}^{1}({(\mu_0\mathsf{I})}^{-1}\curl\bU_h\cdot\curl\overline{\bV_h})
+Q_{K}^{1}(-\omega^2{(\epsilon_0\mathsf{I})}\bU_h\cdot\overline{\bV_h}),\\
\widetilde{\Phi}_{h,1}({\bU_h},{\bV_h})&:=\sum_{K\in\tau_h}
Q_{K}^{1}({(\mu_0\mathsf{I})}^{-1}\curl\bU_h\cdot\curl\overline{\bV_h})
+Q_{K}^{2}(-\omega^2{(\epsilon_0\mathsf{I})}\bU_h\cdot\overline{\bV_h}),\\
\widetilde{\Phi}_{h,2}({\bU_h},{\bV_h})&:=\sum_{K\in\tau_h}
Q_{K}^{2}({(\mu_0\mathsf{I})}^{-1}\curl\bU_h\cdot\curl\overline{\bV_h})
+Q_{K}^{2}(-\omega^2{(\epsilon_0\mathsf{I})}\bU_h\cdot\overline{\bV_h}),
\end{align*}
where $Q_{\breve K}^{1}$ is a $2\times 2\times 2$ tensorized  Gauss-Legendre quadrature rule --exact on polynomials of degree two on $\breve K$-- and $Q_{\breve K}^{2}$ is a five point Gaussian quadrature rule --exact on polynomials of degree three--. Hence, $\widetilde{\Phi}_{h,1}$
and $\widetilde{\Phi}_{h,2}$ satisfy the requirements of Theorem \ref{thm:mainresaffine} with $r=1$ and $2$, respectively. The right-hand side is implemented with a 15 point Gaussian quadrature and is left undisturbed throughout the experiments in this section. Figure \ref{fig:error_test_2} displays the convergence of the solution to Problem \ref{prob:varprobnum} corresponding to the different numerical implementations of the sesquilinear form. 8 meshes with 1,184, 1,688, 7,936, 1,7492, 54,480, 223,652, 674,676 and 2,454,312 degrees of freedom were employed.

\begin{figure}[t]
\center
\includegraphics[scale=0.60]{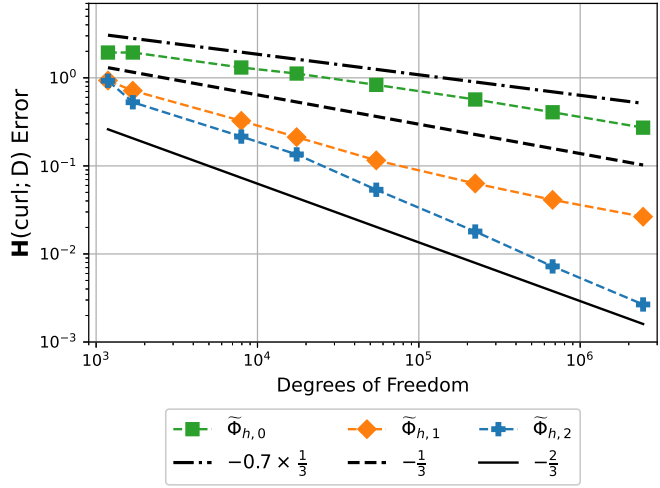}
\caption{Error convergence in the $\hcurl{\D}$-norm for the solutions of
Problem \ref{prob:varprobnum} to $\bE$ --given in \eqref{eq:analyticSol}-- depending on the implemented numerical variations of the sesquilinear and antilinear forms, indicated by the legend. The implementation considering $\widetilde\Phi_{h,2}$ and
$\widetilde\Phi_{h,1}$ attain their predicted rates of convergence of $-\frac{2}{3}$ and $-\frac{1}{3}$, respectively. The implementation considering $\widetilde\Phi_{h,0}$ suffers from a degenerated rate close to $-0.7\times\frac{1}{3}$, though convergence is still observed.}
\label{fig:error_test_2}
\end{figure}

\begin{remark}
\label{rmk:webb}
GETDP does not include an implementation of the second order
curl-conforming finite elements defined in Section \ref{ssec:FEMPoly}. However, an implementation of the second order Webb basis functions \cite{geuzaine1999convergence,webb1999hierarchal,webb1993hierarchal} is available. Since they are contained in $\mathbb{P}_2(\breve{K})$, the consistency estimates in Theorems \ref{thm:consesq} and \ref{thm:conant} remain valid, so our
numerical examples are still meaningful when using this alternative basis.
\end{remark}

\subsection{Effect of quadrature precision on preasymptotic convergence}
We investigate the effect that quadrature precision has on $\ell\in\IN$ in Theorem \ref{thm:mainresaffine}, i.e., the duration of the preasymptotic regime before convergence is observed at the 
predicted rate. We consider $\mu_0=10$ (as before)
and $\epsilon_0=-10-9\sin(m\pi x_3)$ for two cases $m=10$ and $20$. The solution to Problem \ref{prob:varprob} is still given by \eqref{eq:analyticSol}. We construct our numerical variations for the sesquilinear form as follows
\begin{align*}
\widetilde{\Phi}_{h,n}({\bU_h},{\bV_h})&:=\sum_{K\in\tau_h}
Q_{K}^1({(\mu_0\mathsf{I})}^{-1}\curl\bU_h\cdot\curl\overline{\bV_h})+Q_{K,n}^{2}(-\omega^2{(\epsilon_0\mathsf{I})}\bU_h\cdot\overline{\bV_h}),
\end{align*}
where $Q_{\breve K}^1$ is as before and $Q_{\breve{K},n}^2$ is a Gaussian
quadrature over $\breve{K}$ with $n=1$, $4$ and $15$ points. The right-hand side
is implemented with a 29 point Gaussian quadrature. Figure \ref{fig:l_precision} displays the convergence of the solution of
Problem \ref{prob:varprobnum} depending on the number of quadrature points used in the implementation of the sesquilinear
form. The employed meshes were as in Section \ref{ssec:num_1}.

\begin{figure}[t]
     \subfloat[$\epsilon_0=-10-9\sin(10\pi x_3)$\label{subfig:l_precision_10}]{%
       \includegraphics[width=0.5\textwidth]{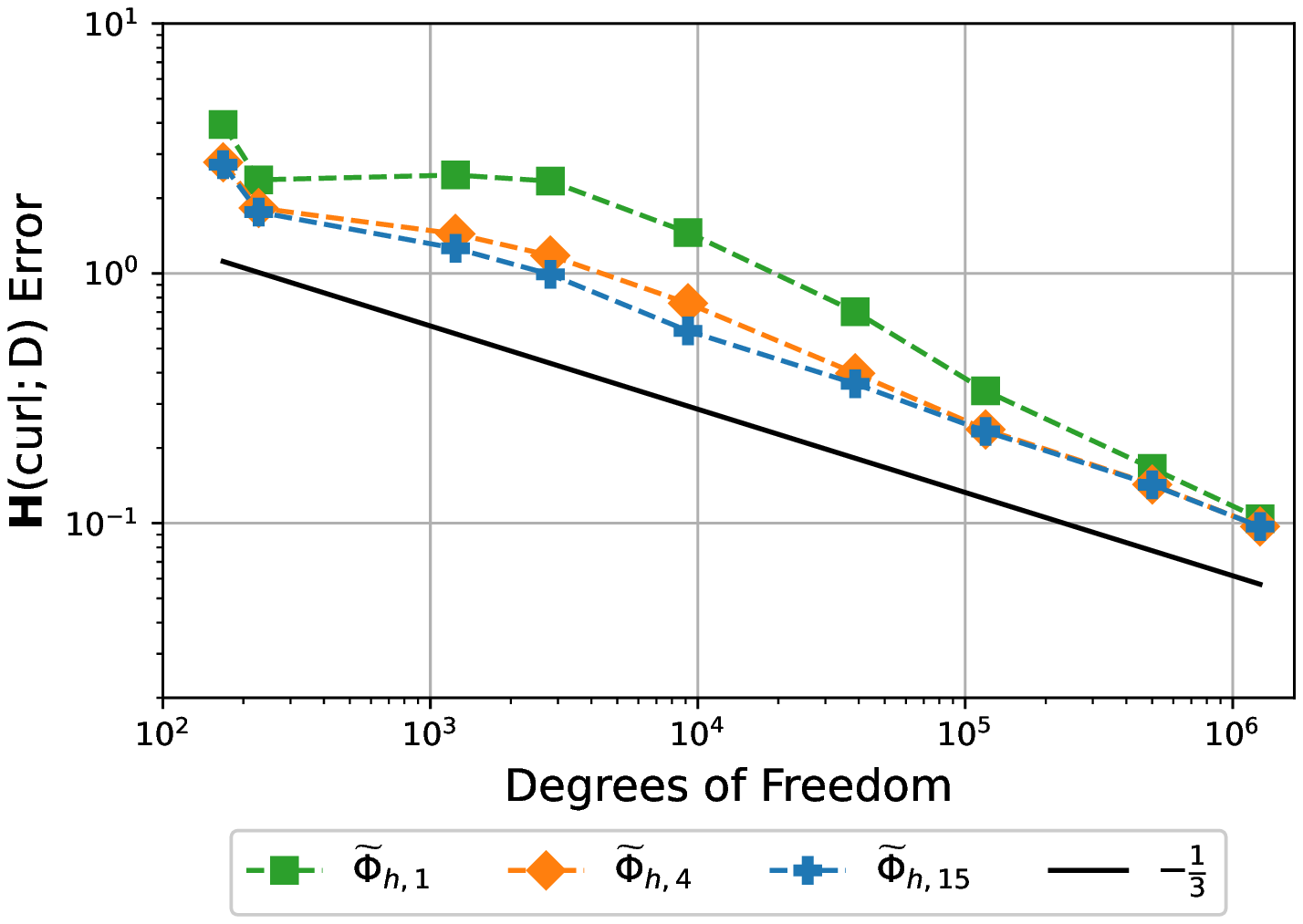}
     }
     \hfill
     \subfloat[$\epsilon_0=-10-9\sin(20\pi x_3)$\label{subfig:l_precision_20}]{%
       \includegraphics[width=0.5\textwidth]{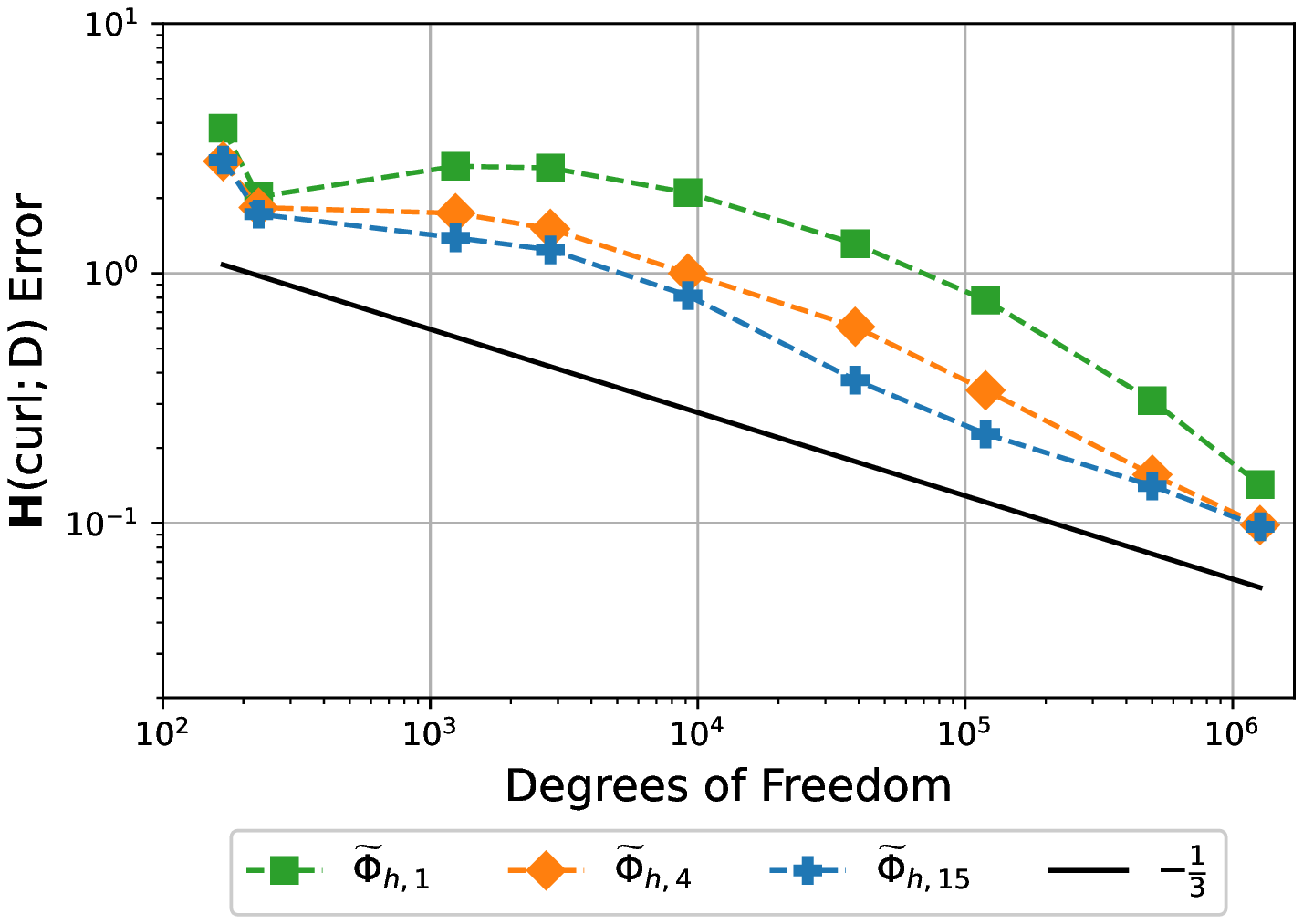}
     }
     \caption{Error convergence in the $\hcurl{\D}$-norm for solutions of
Problem \ref{prob:varprobnum} to $\bE$ --given in \eqref{eq:analyticSol}-- for the two cases of $\epsilon_0$ and depending on the implemented numerical variations of the sesquilinear form. In both cases, we observe a marked preasymptotic regime when only one quadrature point is implemented, where no convergence is observed before reaching $\approx10^4$ degrees of freedom
and the mesh is able to resolve the oscillatory term in $\epsilon_0$. Improving the quadrature rule to 4 points quickly corrects this issue on the case in Figure \ref{subfig:l_precision_10}, though a preasymptotic regime is still observed on the case displayed in Figure \ref{subfig:l_precision_20}, which is improved on by a further increase in precision to 15 quadrature points.}
     \label{fig:l_precision}
   \end{figure}

\section{Concluding Remarks}
\label{sec:Conc}
Our two main results (Theorems \ref{thm:mainresaffine}
and \ref{thm:mainrescurv}) yield sufficient conditions to
ensure convergence rates for the errors induced by quadrature
rules used when solving Maxwell Equations via the FE method with
inhomogeneous coefficients and on meshes with curved elements
(tetrahedrons). Interestingly, Theorem \ref{thm:mainresaffine}
confirm the presumptions of P.~Monk in the penultimate paragraph of
Section 8.3 in \cite{Monk:2003aa}, where it is stated that quadrature
rules exact on polynomials of degree $2k-1$ are expected to yield
convergence rates of order $h^k$. 

Unlike our result in Section \ref{sec:PolyDom}, Theorem \ref{thm:mainrescurv}
analyses only the quadrature effect in implementation and
does not present convergence estimates for the fully discrete solution to the real
solution. The result does, however, set aside the issue of numerical integration, so
that only the variational crime of the approximation of the real domain is left to
be analysed. Notice as well, that choosing $\mathfrak{K}=1$ in Section \ref{sec:CurvDom}
yields the same conditions for the quadrature rules in our two main results, which is of course
to be expected.

Numerical examples in Section \ref{sec:Num_Ex} not only confirm our results, but also display
the necessity of the conditions of Theorems \ref{thm:mainresaffine} and \ref{thm:mainrescurv}, since implementations that do not satisfy said conditions attain lower convergence 
rates than implementations that do.

Lastly, though we consider the smoothness of our parameters $\epsilon$, $\mu$ and $\bJ$ to
be global---belonging to some Sobolev space on the whole domain---one could quite easily accommodate
our results to consider parameters with piecewise smoothness on a finite set of sub-domains of
$\D$, by requesting that the two dimensional surfaces across which they fail to posses the required
degree of smoothness, does not cross any element of the mesh. In other words, that each of the sub-domains
is meshed so that the parameters are smooth on all elements of the mesh. An analogous consideration
holds if the domain $\D$ fails to posses a certain degree of smoothness at a finite number of points on its
boundary.



\bibliographystyle{plain}
\bibliography{max}
\end{document}